\newtheorem{theorem}{Theorem}
\newtheorem{lemma}[theorem]{Lemma}
\newtheorem{proposition}[theorem]{Proposition}
\newtheorem{corollary}[theorem]{Corollary}
\theoremstyle{definition}
\newtheorem{definition}[theorem]{Definition}
\theoremstyle{remark}
\newtheorem{remark}[theorem]{Remark}
\numberwithin{theorem}{section}
\numberwithin{equation}{section}
\newcommand{\N}{\mathds{N}}
\newcommand{\R}{\mathds{R}}
\newcommand{\F}{\mathcal{F}}
\renewcommand{\L}{\mathcal{L}}
\renewcommand{\b}{\mathfrak{b}}
\renewcommand{\d}{\mathrm{d}}
\newcommand{\dx}{\mathrm{d}x}
\newcommand{\dy}{\mathrm{d}y}
\newcommand{\dt}{\mathrm{d}t}
\newcommand{\ds}{\mathrm{d}s}
\newcommand{\eps}{\varepsilon}
\renewcommand{\epsilon}{\varepsilon}
\renewcommand{\rho}{\varrho}
\DeclareMathOperator{\spt}{spt}
\DeclareMathOperator{\dist}{dist}
\DeclareMathOperator{\Div}{div}
\DeclareMathOperator{\ca}{cap}
\newcommand{\wto}{\rightharpoondown}
\newcommand{\wsto}{\overset{\raisebox{-1ex}{\scriptsize $*$}}{\rightharpoondown}}
\def\Xint#1{\mathchoice
    {\XXint\displaystyle\textstyle{#1}}%
    {\XXint\textstyle\scriptstyle{#1}}%
    {\XXint\scriptstyle\scriptscriptstyle{#1}}%
    {\XXint\scriptscriptstyle\scriptscriptstyle{#1}}%
    \!\int}
\def\XXint#1#2#3{\setbox0=\hbox{$#1{#2#3}{\int}$}
    \vcenter{\hbox{$#2#3$}}\kern-0.5\wd0}
\def\bint{\Xint-}
\def\dashint{\Xint{\raise4pt\hbox to7pt{\hrulefill}}}
\def\XXiint#1#2#3{\setbox0=\hbox{$#1{#2#3}{\iint}$}
    \vcenter{\hbox{$#2#3$}}\kern-0.5\wd0}
\newcommand{\power}[2]{\boldsymbol{#1^{\mbox{\unboldmath{\scriptsize$#2$}}}}}
\subjclass[2020]{35A01, 35A15, 35K51, 35K55, 49J40}
\keywords{noncylindrical domains, doubly nonlinear systems, porous medium equation, existence, variational solutions}
\begin{document}
\title[Existence for doubly nonlinear systems in nondecreasing domains]{Existence of variational solutions to doubly nonlinear systems in nondecreasing domains}
\date{\today}

\author[L.~Schätzler]{Leah Schätzler}
\address{Leah Sch\"atzler\\
Department of Mathematics and Systems Analysis, Aalto University\\
P.O.~Box 11100, FI-00076 Aalto, Finland}
\email{ext-leah.schatzler@aalto.fi}

\author[C.~Scheven]{Christoph Scheven}
\address{Christoph Scheven\\
Fakultät für Mathematik, Universität Duisburg-Essen\\
Thea-Leymann-Str.~9, 45127 Essen, Germany}
\email{christoph.scheven@uni-due.de}

\author[J.~Siltakoski]{Jarkko Siltakoski}
\address{Jarkko Siltakoski\\
Department of Mathematics and Statistics, University of Helsinki, P.O.
Box 68 (Gustaf Hallstr\"omin katu 2b), Finland}
\email{jarkko.siltakoski@helsinki.fi}

\author[C.~Stanko]{Calvin Stanko}
\address{Calvin Stanko\\
Fachbereich Mathematik, Paris-Lodron Universit\"at Salzburg\\
Hellbrunner Str.~34, 5020 Salzburg, Austria}
\email{calvin.stanko@plus.ac.at}

\begin{abstract}
For $q \in (0, \infty)$, we consider the Cauchy--Dirichlet problem to doubly nonlinear systems of the form
\begin{align*}
    \partial_t \big( |u|^{q-1}u \big) - \operatorname{div} \big( D_\xi f(x,u,Du) \big) = - D_u f(x,u,Du)
\end{align*}
in a bounded noncylindrical domain $E \subset \mathds{R}^{n+1}$.
We assume that $x \mapsto f(x,u,\xi)$ is integrable, that $(u,\xi) \mapsto f(x,u,\xi)$ is convex, and that $f$ satisfies a $p$-coercivity condition for some $p \in (1,\infty)$.
However, we do not impose any specific growth condition from above on $f$.
For nondecreasing domains that merely satisfy $\mathcal{L}^{n+1}(\partial E) = 0$, we prove the existence of variational solutions $u \in C^{0}([0,T];L^{q+1}(E,\mathds{R}^N))$ via a nonlinear version of the method of minimizing movements.
Moreover, under additional assumptions on $E$ and a $p$-growth condition on $f$, we show that $|u|^{q-1}u$ admits a weak time derivative in the dual $(V^{p,0}(E))^{\prime}$ of the subspace $V^{p,0}(E) \subset L^p(0,T;W^{1,p}(\Omega,\mathds{R}^N))$ that encodes zero boundary values.
\end{abstract}

%***************************************************************************

\maketitle

\section{Introduction}
The main objective of the present paper is to prove the existence of variational solutions $u \colon E \to \mathds{R}^{N}$, $N \geq 1$, of doubly nonlinear systems, where $E \subset \R^n \times [0,T]$ for $n \geq 2$ and some $T>0$ is a noncylindrical domain. Here, we will focus on domains that are nondecreasing with respect to time (see \eqref{nondecreasing_condition} for the precise definition), whereas general domains will be treated in an upcoming paper. Before we specify in detail the more general systems considered in this paper, we briefly discuss the prototype equation, which is given by 
\begin{align}\label{prototype equation}
    \partial_t (|u|^{q-1}u) - \Div( \vert Du \vert^{p-2} Du ) = 0
		& \text{ in } E
\end{align}
with parameters $q \in (0,\infty)$ and $p \in (1, \infty)$. For the special choice of $q=1$ and $p=2$, this reduces to the heat equation. In the case of fixed $q=1$ and arbitrary $p \in (1, \infty)$ we are dealing with the parabolic $p$-Laplace equation. Vice versa, for fixed $p=2$ and arbitrary $q \in (0, \infty)$, \eqref{prototype equation} is the porous medium equation (also called fast diffusion equation for $q>1$).
More generally, for $p-1>q$, \eqref{prototype equation} is called slow diffusion equation, and for $p-1<q$ it is called fast diffusion equation \cite{Ivanov}. This is represented in the behaviour of solutions. Slow diffusion equations allow compactly supported solutions and perturbations propagate with finite speed, while for fast diffusion equations, perturbations propagate with infinite speed, which prevents solutions with compact support.

Furthermore, parabolic systems also have a wide range of applications such as fluid dynamics, cf.~\cite{Krechetnikov, Shelley}, especially the modeling of glacier formations, solidification and crystal
growth, as well as the Stefan problem, cf.~\cite{Stefan}, the Skorokhod problem, cf.~\cite{Skorokhod}, soil science, filtration, cf.~\cite{Aronson, Barenblatt1, Barenblatt2, Ladyzenskaja, Showalter_Walkington}, and mathematical biology, where reaction-diffusion equations are used for modeling pattern formations, cf.~\cite{Crampin_Gaffney_Maini, Murray}.
For parabolic equation in noncylindrical domains exhibiting stochastic properties, we refer to \cite{LS_84, Burdzy-Chen-Sylvester-1, Burdzy-Chen-Sylvester-2}. Other prominent examples of problems on time varying domains arise in fluid and quantum mechanics, cf.~\cite{FZ_2010, BBFSV13, BBFSV08}. A more detailed overview can be found in~\cite{Knobloch_Krechetnikov}.

We also want to give a brief insight on previous results. 
First, we will have a look on doubly nonlinear equations in cylindrical domains. The first doubly nonlinear equations were analyzed in \cite{Grange_Mignot} by Grange and Mignot, as well as in \cite{Alt_Luckhaus} by Alt and Luckhaus, who proved the existence of weak solutions for the equation 
\begin{align*}
\partial_{t} b(u) - \Div(a(b(u),Du))=f(b(u)).
\end{align*}
Here, $b$ is the gradient of a convex $C^{1}$-function satisfying $b(0)=0$ and $a(z, \xi)$ is a continuous and elliptic function in $(z,\xi)$ satisfying a $(p-1)$-growth condition in the second variable. The authors split the lateral boundary $\Omega_{T}$ into two complementary parts, where the first satisfies a Neumann condition and the second a Dirichlet condition. Furthermore, in their proofs the time derivative $\partial_{t} b(u)$ was replaced by the difference quotient $\Delta_{-h}b(u)$, where $h$ is fixed, and a Galerkin method was applied to the elliptic problems. The results in \cite{Alt_Luckhaus} and \cite{Grange_Mignot} were generalized to higher order doubly nonlinear equations on unbounded domains by Bernis in \cite{Bernis}.

Further, Akagi and Stefanelli considered in \cite{Akagi_Stefanelli} a Cauchy-Dirichlet problem for an equation of the form 
\begin{align*}
\partial_{t} b(u) - \Div(a(Du)) \ni f
\end{align*}
and with Dirichlet boundary values, where $b \subset \mathds{R} \times \mathds{R}$ and $a \subset \mathds{R}^{n} \times \mathds{R}^{n}$ are maximal monotone graphs satisfying polynomial growth conditions. They proved the existence for the dual formulation 
\begin{align*}
-\Div(a(Db^{-1}(v)) \ni f - \partial_{t} v 
\end{align*}
by means of the so called Weighted Energy Dissipation Functional approach, which is also known as Elliptic Regularization. 
Moreover, several existence results for doubly nonlinear equations have been proven via regularization techniques and a~priori estimates by Ivanov, Mkrtychyan and Jäger in \cite{Ivanov,Ivanov_Mkrtychyan, Ivanov_Mkrtychyan_Jaeger}. For $p \in (1,\infty)$ the Dirichlet boundary values were considered in the space $W^{1,p}(\Omega_{T}) \cap L^{\infty}(\Omega_T)$.

Later Bögelein, Duzaar, Marcellini and Scheven showed in \cite{BDMS18-2} that there exist non-negative variational solutions to the Cauchy--Dirichlet problem with time-independent boundary values to a general doubly nonlinear equation 
\begin{align*}
\partial_t b(u) - \Div( D_\xi f(x,u,Du) ) = - D_u f(x,u,Du),
\end{align*}
where the nonlinearity $b$ is an increasing, piecewise $C^{1}$ function satisfying certain growth conditions, and $f$ merely satisfies convexity and coercivity assumption, but no specific growth conditions. This allows $f$ to have an exponential or non-standard $(p,q)$-growth, where $1<p<q<\infty$.
Furthermore, assuming that $b=|u|^{q-1}u$, and that $f$ is independent of $x$ and $u$ and fulfills a standard $p$-growth condition, Schätzler \cite{Schaetzler} was able to deal with time-dependent Dirichlet boundary values $g \in L^p(0,T;W^{1,p}(\Omega,\R^N))$ with $g(0) \in L^{q+1}(\Omega)$ and $\partial_t g \in L^1(0,T;L^{q+1}(\Omega))$.
For related obstacle problems, see also \cite{Schaetzler-obstacle-deg,Schaetzler-obstacle-sing}.

Moving on to equations in noncylindrical domains, in other words domains which are allowed to grow or shrink with respect to time, we have a first look on linear equations. A significant amount of research has been dedicated to linear equations in noncylindrical domains, cf.~\cite{Acquistapace-Terreni, Baiocchi, BHL1997, Cannarsa_Prato_Zolesio, Lions1957, Lumer-Schnaubelt-2001}. The method of minimizing movements, which we also use in the present article, was first implemented by Gianazza and Savaré~\cite{Gianazza_Savare} for domains, which do not decrease in time. Later, Savaré \cite{Savare} considered uniformly $C^{1,1}$-domains with a shrinking speed bounded by a distance function fulfilling a Lipschitz condition. We point out that the method of minimizing movements was first suggested by De Giorgi in \cite{De_Giorgi} as a way to show existence of weak solutions for linear parabolic equations in noncylindrical domains. This assumption was changed to a Hölder condition by Bonaccorsi and Guatteri in \cite{Bonaccorsi_Guatteri}. Furthermore, shrinking domains tend to be more difficult to handle in general, since the boundary value problem could become overdetermined if the domain shrinks too fast.

Nonlinear equations in time-dependent domains, where each time slice is Lip\-schitz and a regular deformation of another, were first analyzed by Paronetto, cf. \cite{Paronetto}.
The existence of weak solutions to equations of parabolic $p$-Laplace type in bounded open sets $E$ with a Lipschitz boundary in space-time was shown by Calvo, Novaga and Orlandi \cite{Calvo_Novaga_Orlandi}.
Under stronger conditions on $E$ (that are e.g.~satisfied for purely expanding domains), the authors obtain uniqueness.

The gradient flow associated to an integral functional in a noncylindrical bounded domain $E$ with systems of the form 
\begin{align}\label{eq:PDE_with_linear_time_derivative_from_BDSS18_paper}
    \partial_t u - \Div( D_\xi f(x,u,Du) ) = - D_u f(x,u,Du) \text{ on } E
\end{align}
was studied by Bögelein, Duzaar, Scheven and Singer in \cite{BDSS18}. They proved existence of variational solutions under the only assumption that the variational integrand $f$ is convex and coercive with respect to the $W^{1,p}$-norm for some $p>1$, and that the noncylindrical domain $E$ satisfies the weak regularity condition $\L^{n+1}(\partial E) = 0$.
However, the integrand $f$ does not fulfil a specific growth condition.
The authors use different methods to deal with nondecreasing and general domains.
In nondecreasing domains, they prove the existence of variational solutions via the method of minimizing movements.
Moreover, they prove that these solutions are unique, continuous as maps from $[0,T)$ into $L^2(\Omega,\R^N)$, and possess a weak time derivative $\partial_t u$ in the space $L^{2}(E,\R^N)$.
If $f$ additionally satisfies a standard $p$-growth condition, $u$ admits a time derivative in the dual of the parabolic function space encoding the boundary values.

Our goal is to show the existence of variational solutions in noncylindrical domains.
We will combine the techniques and methods used in \cite{BDMS18-2} and \cite{BDSS18}. However, we weakened certain assumptions and generalized some aspects of the problem. In contrast to \cite{BDMS18-2} the solutions are not restricted to be non-negative. Furthermore, instead of the space-time cylinder $\Omega_{T}$ we will consider a noncylindrical domain, similar to \cite{BDSS18}. While the assumptions on $f$ are analogous, cf. \eqref{eq:integrand}, our primary contribution lies in the analysis of a generalized version of Equation \eqref{eq:PDE_with_linear_time_derivative_from_BDSS18_paper}, where $\partial_t u$ is replaced by the nonlinear term $\partial_t \big( |u|^{q-1} u \big)$, which is a special case of $\partial_t b(u)$ in \cite{BDMS18-2}. Moreover, unlike the work of Bögelein, Duzaar, Scheven and Singer, which dealt with zero lateral boundary conditions in \cite{BDSS18}, we impose time-independent lateral boundary values, similar to \cite{BDMS18-2}. Nevertheless, we expect that our methods can be used for time-dependent boundary values.
We note that in the case of the doubly nonlinear systems considered here, the question of uniqueness is much more delicate.
In particular, the inclusion of coefficients is already difficult in the case of cylindrical domains.
However, for filtration equations that include the prototype porous medium equation as a special case, uniqueness can be proven by testing with a Ole\u{\i}nik type test function, see \cite[Theorem 5.13]{Vazquez}.
In noncylindrical domains, Abdulla \cite{Abdulla-uniqueness} established uniqueness for non-negative solutions to the prototype porous medium equation with non-negative continuous boundary values.
If $E$ is a finite union of cylinders, the assumptions on the regularity of the lateral boundary were weakened by Björn, Björn, Gianazza and Siljander \cite{Bjoern-etal-regular-points-PME}.
Moreover, doubly nonlinear equations that are independent of $(x,t)$ were investigated by Otto \cite{Otto} in the cylindrical setting.
He proved uniqueness of solutions to the Dirichlet problem with time independent boundary values via an $L^1$-contraction principle.

\section{Setting and main results}
\subsection{Setting}
First, we  give details on our setting and notation.
Let $\Omega \subset \R^n$, $n \geq 2$, be open and bounded, $0<T<\infty$ and define the space-time cylinder $\Omega_T := \Omega \times [0,T)$.
Further, consider a relatively open noncylindrical domain $E \subset \Omega_T$ merely fulfilling the regularity assumption
\begin{equation}
	\L^{n+1}(\partial E) = 0.
	\label{eq:boundary_E_zero_measure}
\end{equation}
Denoting the time slice with fixed time $t \in [0,T)$ of $E$ by
\begin{align*}
	E^t := \{x \in \R^n : (x,t) \in E\} \subset \R^n,
\end{align*}
we find that
\begin{align*}
	E = \bigcup_{t \in [0,T)} E^t \times \{t\}.
\end{align*}
We will mainly be concerned with domains which are nondecreasing in time in the sense that the time slices satisfy the condition
\begin{align}\label{nondecreasing_condition}
    E^s \subset E^t
	\quad \text{for any } 0 \leq s \leq t < T.
\end{align}

For a parameter $q \in (0,\infty)$, an initial datum $u_o$, and boundary condition $u_{\ast}$ we are concerned with the Cauchy--Dirichlet problem
\begin{equation}
	\left\{
	\begin{array}{cl}
		\partial_t (|u|^{q-1}u) - \Div( D_\xi f(x,u,Du) ) = - D_u f(x,u,Du)
		& \text{in } E, \\[5pt]
		u=u_{\ast}
		&\text{on } \partial_\mathrm{lat} E, \\[5pt]
		u(\cdot,0) = u_o
		&\text{in } E^0,
	\end{array}
	\right.
	\label{eq:system}
\end{equation}
where $u \colon E \to \R^N$, $N \geq 1$, and the lateral boundary of $E$ is defined by
$$
    \partial_\mathrm{lat} E :=\bigcup_{t \in [0,T)} \partial E^t \times \{t\}.
$$
Here, we assume that for $p \in (1,\infty)$ and a constant $\nu > 0$, the integrand $f \colon \Omega \times \R^N \times \R^{Nn} \to [0,\infty)$ satisfies the conditions
\begin{equation}
	\left\{
	\begin{array}{l}
		\mbox{$x \mapsto f(x,u,\xi) \in L^1(\Omega)$ for any $(u,\xi) \in \R^N \times \R^{Nn}$,} \\[5pt]
		\mbox{$(u,\xi) \mapsto f(x,u,\xi)$ is convex for a.e.~$x \in \Omega$,} \\[5pt]
		\mbox{$\nu |\xi|^p \leq f(x,u,\xi)$ for a.e.~$x \in \Omega$ and all $(u,\xi) \in \R^N \times \R^{Nn}$.} 
	\end{array}
	\right.
	\label{eq:integrand}
\end{equation}
We point out that we do not require a growth assumption from above. However, the assumptions \eqref{eq:integrand}$_1$ and \eqref{eq:integrand}$_2$ imply the following result, which can be seen as a mild growth property. Its proof is analogous to \cite[Lemma 2.3]{Schaetzler-Siltakoski}.
It shows that the integrability in $x$ is uniform with respect to $(u,\xi)$ in any bounded set in $\R^N \times \R^{Nn}$.
\begin{lemma}
There exists a measurable map $g \colon \Omega \times [0,\infty) \to [0,\infty)$ such that $x \mapsto g(x,M) \in L^1(\Omega)$ for any $M \geq 0$ and
\begin{align}\label{ineq:integrand_f_bound}
    0 \leq f(x,u,\xi) \leq g(x,M)
\end{align}
	holds true for a.e.~$x \in \Omega$ and all $(u,\xi) \in \R^N \times \R^{Nn}$ such that $\max\{|u|,|\xi|\} \leq M$.
\end{lemma}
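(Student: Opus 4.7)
The plan is to construct $g$ explicitly by exploiting the convexity of $f$ in $(u,\xi)$ to reduce a supremum over a continuum of arguments to a maximum over finitely many ``vertex'' values that are automatically integrable in $x$. The key point is that any point in a cube is a convex combination of the cube's vertices, so Jensen's inequality applied to the convex map $f(x,\cdot,\cdot)$ bounds $f$ uniformly on the cube by its values at the vertices.

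First I would observe that for a.e.~$x \in \Omega$, the function $(u,\xi) \mapsto f(x,u,\xi)$ is convex and real-valued on all of $\R^N \times \R^{Nn}$ by \eqref{eq:integrand}$_1$--\eqref{eq:integrand}$_2$, hence continuous. Together with the pointwise measurability in $x$ assumed in \eqref{eq:integrand}$_1$, this makes $f$ a Carathéodory function, hence jointly measurable in $(x,u,\xi)$. Next, for $M \geq 0$ let $V_M \subset \R^N \times \R^{Nn}$ denote the finite set of $2^{N+Nn}$ vertices of the cube $[-M,M]^{N+Nn}$. Each vertex depends linearly on $M$, so the map $(x,M) \mapsto f(x,v(M))$ is jointly measurable for each vertex, and I would define
\[
g(x,M) := \max_{v \in V_M} f(x,v).
\]
As a maximum of finitely many jointly measurable, nonnegative functions, $g$ is jointly measurable; and since $f(\cdot,v) \in L^1(\Omega)$ by \eqref{eq:integrand}$_1$, the same is true of $g(\cdot,M)$.

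Finally, to verify the bound, take $(u,\xi) \in \R^N \times \R^{Nn}$ with $\max\{|u|,|\xi|\} \leq M$. Since each coordinate of $(u,\xi)$ then lies in $[-M,M]$, we have $(u,\xi) \in [-M,M]^{N+Nn} = \operatorname{conv}(V_M)$. Writing $(u,\xi) = \sum_i \lambda_i v_i$ as a convex combination of the vertices and applying the convexity of $f(x,\cdot,\cdot)$ (i.e.~Jensen's inequality for finite convex combinations) yields
\[
0 \leq f(x,u,\xi) \leq \sum_i \lambda_i f(x,v_i) \leq \max_{v \in V_M} f(x,v) = g(x,M),
\]
which is the desired estimate. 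The only real subtlety, and thus the main point to be careful about, is the joint measurability of $g$ in $(x,M)$; this is already handled by noting that each vertex is a linear, hence continuous, function of $M$ and that $f$ is Carathéodory. No growth hypothesis on $f$ is required, consistent with the spirit of \eqref{eq:integrand}.
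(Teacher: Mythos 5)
Your proof is correct and is essentially the same argument as the one the paper invokes by reference to \cite[Lemma 2.3]{Schaetzler-Siltakoski}: bound $f(x,\cdot,\cdot)$ on the cube $[-M,M]^{N+Nn}$ by its values at the finitely many vertices via convexity, and take $g(x,M)$ to be the maximum of these integrable vertex values. The measurability discussion (Carathéodory property from finiteness plus convexity, composition with the linear dependence of the vertices on $M$) and the inclusion $\{\max\{|u|,|\xi|\}\le M\}\subset[-M,M]^{N+Nn}$ are both handled correctly.
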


Next, we impose the following conditions on the initial and lateral boundary values $u_o$ and $u_{\ast}$:
\begin{equation}
	\left\{
	\begin{array}{l}
		\mbox{$u_o \in L^{q+1}(\Omega, \mathds{R}^{N}$),} \\[5pt]
		\mbox{$u_{\ast} \in L^{q+1}(\Omega, \mathds{R}^{N}) \cap W^{1,p}(\Omega, \mathds{R}^{N})$,} \\[5pt]
		\mbox{$0 \leq \int_{\Omega} f(x, u_{\ast} +\phi , D(u_{\ast} +\phi)) \,\dx < \infty,  \quad \forall \phi \in C_{0}^{\infty}(\Omega, \mathds{R}^{N})$.} 
	\end{array}
	\right.
	\label{compatibility_condition_for_u_o_and_u_ast}
\end{equation}

Now, we introduce appropriate function spaces for variational solutions and comparison maps.
First, on the level of the time slices we define the space
\begin{align*}
	V_t :=
	\big\{ v \in W^{1,p}_{u_{\ast}}(\Omega,\R^N) : v=u_{\ast} \text{ a.e.~in } \Omega \setminus E^t \big\}.
\end{align*}
Further, we consider the parabolic function spaces
\begin{align*}
	V^p(E) :=
	\big\{ v \in L^p\big(0,T;W^{1,p}_{u_\ast}(\Omega,\R^N)\big) : v(t) \in V_t \text{ for a.e.~} t \in [0,T) \big\},
\end{align*}
and
\begin{align*}
	V^p_q(E) :=
	V^p(E) \cap L^{q+1}(\Omega_T,\R^N), 
\end{align*}
equipped with the norms
\begin{align*}
    \| v \|_{V^p(E)}
	:=
	\| Dv \|_{L^p(\Omega_T,\R^{Nn})} + \| v \|_{L^p(\Omega_T,\R^{N})},
\end{align*}
and
\begin{align*}
    \| v \|_{V^p_q(E)}
	:=
	\| Dv \|_{L^p(\Omega_T,\R^{Nn})} + \| v \|_{L^{q+1}(\Omega_T,\R^N)},
\end{align*}
respectively.

For $u \in \R^N$ we use the abbreviation
\begin{align*}
    \power{u}{q}
	:=
	|u|^{q-1} u
\end{align*}
and define the boundary term by
\begin{align*}
	\b[u,v]
	&:=
	\tfrac{1}{q+1} |v|^{q+1} - \tfrac{1}{q+1} |u|^{q+1} - \power{u}{q} \cdot (v-u) \\
	&=
	\tfrac{1}{q+1} |v|^{q+1} + \tfrac{q}{q+1} |u|^{q+1} - \power{u}{q} \cdot v.
\end{align*}

At this stage, we define variational solutions to the Cauchy-Dirichlet problem \eqref{eq:system}.
Similar as in \cite{BDMS18-1}, one can formally derive the variational inequality from ~\eqref{eq:system}$_{1}$.
This variational approach was originally introduced by Lichnewsky and Teman \cite{Lichnewsky_Temam} and developed in \cite{BDM-15, BDMS18-2, BDM13} by Bögelein, Duzaar, Marcellini and Scheven in order to prove existence results for a wide range of parabolic equations.

\begin{definition}[Variational solutions]\label{Definition:variational_solution}
Suppose that $E \subset \Omega_T$ is a relatively open noncylindrical domain, that $f \colon \Omega \times \R^N \times \R^{Nn} \to [0,\infty)$ satisfies \eqref{eq:integrand}, and that $u_{\ast}, u_o \colon \Omega \to \mathds{R}^N$ satisfy \eqref{compatibility_condition_for_u_o_and_u_ast}.
A measurable map
\begin{align}\label{measurable_map_var_ineq}
    u \in L^\infty(0,T;L^{q+1}(\Omega,\R^N)) \cap V^p_q(E)
\end{align}
is called a \emph{variational solution} to \eqref{eq:system} if and only if the variational inequality
\begin{align}
	\iint_{E\cap \Omega_\tau} &f(x,u,Du) \,\dx\dt
	\nonumber \\&\leq
	\iint_{E\cap \Omega_\tau} f(x,v,Dv) \,\dx\dt
	+\iint_{E\cap \Omega_\tau} \partial_t v \cdot (\power{v}{q} - \power{u}{q}) \,\dx\dt
    \label{eq:variational_inequality} \\
	&\phantom{=}
	-\int_{E^\tau}	\b[u(\tau),v(\tau)] \,\dx
	+\int_{E^0} \b[u_o,v(0)] \,\dx
    \nonumber
\end{align}
holds true for a.e.~$\tau \in (0,T)$ and any comparison map $v \in V^p_q(E)$ with time derivative $\partial_t v \in L^{q+1}(\Omega_T,\R^N)$.
\end{definition}

\subsection{Main results}
First, we establish the existence of variational solutions in nondecreasing domains.
\begin{theorem}
\label{thm:existence_in_nondecreasing_domains}
    Assume that $E \subset \Omega_{T}$ is a relatively open noncylindrical domain satisfying \eqref{eq:boundary_E_zero_measure} and \eqref{nondecreasing_condition}, and that $f\colon \Omega \times \mathds{R}^{N} \times \mathds{R}^{Nn} \to [0,\infty)$ satisfies \eqref{eq:integrand}.
    Then, for any boundary value $u_{\ast}$ and initial datum $u_o$ satisfying \eqref{compatibility_condition_for_u_o_and_u_ast}, 
    there exists a variational solution
    \begin{align*}
        u \in C^{0} \big([0,T);L^{q+1}(\Omega,\R^N) \big) \cap V^p_q(E)
    \end{align*}
in the sense of Definition~\ref{Definition:variational_solution}.
    Moreover, we have that $\partial_t \power{u}{\frac{q+1}{2}} \in L^2(\Omega \times (\epsilon,T),\R^N)$ and $\partial_t \power{u}{q{+}1} \in L^1(\Omega \times (\epsilon,T),\R^N)$ for any $\epsilon > 0$.
\end{theorem}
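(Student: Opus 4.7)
The plan is to construct solutions via a nonlinear minimizing-movements scheme, combining the approach of \cite{BDSS18} for the linear time derivative with the doubly-nonlinear techniques of \cite{BDMS18-2}. For $h = T/N$ with $N \in \N$, set $t_i := ih$ and iteratively define $u_{h,i} \in V_{t_i}$ as the unique minimizer of
\begin{align*}
    \F_{h,i}(v) := \int_{E^{t_i}} f(x, v, Dv) \dx + \frac{1}{h} \int_{E^{t_i}} \b[u_{h, i-1}, v] \dx,
\end{align*}
starting from $u_{h,0} := u_o$ extended by $u_\ast$ outside $E^0$. The nondecreasing inclusion $E^{t_{i-1}} \subset E^{t_i}$ guarantees that $u_{h, i-1}$, extended by $u_\ast$ on $\Omega \setminus E^{t_{i-1}}$, lies in $V_{t_i}$, so the admissible class is nonempty and the functional is finite on it by \eqref{compatibility_condition_for_u_o_and_u_ast}$_3$. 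Convexity of $f$, strict convexity of $v \mapsto \b[u_{h,i-1}, v]$ through its $\tfrac{1}{q+1}|v|^{q+1}$ term, and the $p$-coercivity \eqref{eq:integrand}$_3$ yield existence and uniqueness via the direct method.

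Next, I would derive uniform a priori estimates. Testing minimality with the competitor $u_{h,i-1}$ (extended by $u_\ast$), telescoping in $i$, and using the classical algebraic lower bound
\begin{align*}
    \b[u, v] \geq c(q) \, \bigl| \power{v}{\frac{q+1}{2}} - \power{u}{\frac{q+1}{2}} \bigr|^2
\end{align*}
I obtain $h$-uniform bounds on $\max_i \| u_{h,i} \|_{L^{q+1}(E^{t_i}, \R^N)}$, on $\iint_E f(x, u_h, Du_h) \dx \dt$, and on the discrete $\ell^2$-norm of the time-increments of $\power{u_{h,i}}{(q+1)/2}$. With $u_h$ the piecewise constant interpolant and a piecewise affine interpolant of $\{\power{u_{h,i}}{(q+1)/2}\}_i$ in time, an Aubin--Lions-type compactness argument combined with monotonicity of $w \mapsto \power{w}{q}$ produces a (nonrelabelled) subsequence with $u_h \wto u$ weakly in $V^p_q(E)$ and $\power{u_h}{q} \to \power{u}{q}$ a.e.\ and strongly in $L^{(q+1)/q}(\Omega_T, \R^N)$.

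Passing to the limit in the discrete minimality inequality, tested against a Steklov-averaged version in time of any admissible $v \in V^p_q(E)$ with $\partial_t v \in L^{q+1}(\Omega_T, \R^N)$, delivers \eqref{eq:variational_inequality}. The integral of $f$ is treated via lower semicontinuity; the coupling term $\iint \partial_t v \cdot (\power{v}{q} - \power{u}{q})$ emerges from a discrete summation by parts applied to the family $\{\b[u_{h,i-1}, v_{h,i}]\}_i$ combined with the standard three-point identity relating $\b[u, v]$, $\b[w, v]$ and $\b[u, w]$; and the endpoint contributions $-\int_{E^\tau} \b[u(\tau), v(\tau)] \dx + \int_{E^0} \b[u_o, v(0)] \dx$ arise from the endpoints of this telescoping sum, with the signs secured by the strong $L^{q+1}$ convergence of $u_h$ on a.e.\ time slice. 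The continuity $u \in C^0([0, T); L^{q+1}(\Omega, \R^N))$ then follows by inserting mollified translates of $u$ itself into \eqref{eq:variational_inequality} and matching left- and right-limits of $\tau \mapsto \int_{E^\tau} |u(\tau)|^{q+1} \dx$.

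For the additional regularity $\partial_t \power{u}{(q+1)/2} \in L^2$ and $\partial_t \power{u}{q+1} \in L^1$ on $\Omega \times (\eps, T)$, I would test \eqref{eq:variational_inequality} against a time-shifted competitor $v(x, t) := u(x, s_\eta(t))$, where $s_\eta$ is a small $\eta$-shift mapping $[\eps, T)$ into $[\eps/2, T)$; the nondecreasing property $E^{s_\eta(t)} \subset E^t$ makes this test function admissible. The resulting difference-quotient estimates, together with the quadratic lower bound on $\b$ above, yield the claimed regularity in the limit $\eta \to 0$. The main obstacle throughout is the absence of any upper growth condition on $f$: admissible competitors in the discrete scheme must be chosen so that $f(x, v, Dv)$ stays uniformly integrable in $h$, and the nonlinear coupling $\partial_t v \cdot \power{u}{q}$ forces an upgrade of $\power{u_h}{q}$ from weak to strong convergence in a space pairing with $L^{q+1}$. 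The nondecreasing structure of $E$ is decisive here, as it permits the use of prior-time competitors at each later time without loss of admissibility.
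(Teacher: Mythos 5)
Your overall strategy coincides with the paper's: a nonlinear minimizing-movements scheme, energy and discrete time-increment estimates, compactness, passage to the limit in the discrete minimality inequality via a finite integration-by-parts formula, and continuity in time via the Landes mollification of $u$ itself. However, there is a genuine gap in your derivation of the a~priori estimates. You propose to obtain the bounds on $\max_i\|u_{h,i}\|_{L^{q+1}}$, on $\iint_E f(x,u_h,Du_h)\,\dx\dt$ and on the discrete increments by testing minimality with the competitor $u_{h,i-1}$ and telescoping in $i$. This fails at $i=1$: the initial datum $u_o$ is only assumed to lie in $L^{q+1}(\Omega,\R^N)$, so $u_{h,0}$ (extended by $u_\ast$) need not belong to $W^{1,p}$ and is not an admissible competitor; and even if it were, the telescoped right-hand side is $\int_\Omega f(x,u_o,Du_o)\,\dx$, which is not assumed finite. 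The paper obtains the basic energy bounds by testing with $w\equiv u_\ast$, whose energy is finite by \eqref{compatibility_condition_for_u_o_and_u_ast}$_3$, and only uses the competitor $u_{\ell,i-1}$ for $i\ge 2$, iterating from an index $i_1\ge 1$ and averaging the starting time over $[h_\ell,\eps-h_\ell]$ with the already-established energy bound. This is precisely why the conclusion $\partial_t\power{u}{\frac{q+1}{2}}\in L^2$ holds only on $\Omega\times(\eps,T)$ and not globally (cf.\ the Remark following the theorem, which requires $u_o\in V_0$ with finite energy for the global statement). Your telescoping as written would silently be assuming that stronger hypothesis.

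A second gap concerns the regularity assertions. You propose to prove $\partial_t\power{u}{\frac{q+1}{2}}\in L^2(\Omega\times(\eps,T))$ by inserting time-shifted competitors $v(x,t)=u(x,s_\eta(t))$ into \eqref{eq:variational_inequality}. Definition~\ref{Definition:variational_solution} only admits comparison maps with $\partial_t v\in L^{q+1}(\Omega_T,\R^N)$, which a time-translate of $u$ does not possess a~priori --- this is circular. The paper instead extracts the uniform $L^2(\Omega\times(\eps,T))$ bound on the backward difference quotients $\Delta_{-h_\ell}\power{u_\ell}{\frac{q+1}{2}}$ directly from the discrete scheme (estimate \eqref{bound-time-derivative}) and identifies the weak limit with the distributional time derivative; $\partial_t\power{u}{q+1}\in L^1$ then follows from the product rule. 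Two lesser points: the strong $L^{\frac{q+1}{q}}$ convergence of $\power{u_h}{q}$ you claim does not follow from boundedness in $L^{q+1}$ plus a.e.\ convergence (only weak convergence does, which suffices for the pairing with $\partial_t v$); and you do not address the identification of the boundary values of the limit, i.e.\ $u(t)=u_\ast$ a.e.\ in $\Omega\setminus E^t$, which is where the hypothesis $\L^{n+1}(\partial E)=0$ enters via a Fubini argument, since the approximations only vanish outside the slightly larger discretized domains $E^{(\ell)}\supset E$.
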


\begin{remark}
The argument in Section \ref{Section:Convergence_almost_everywhere_non_decreasing} shows that $\partial_t \power{u}{\frac{q+1}{2}} \in L^2(\Omega_T,\R^N)$ and $\partial_t \power{u}{q{+}1} \in L^1(\Omega_T,\R^N)$, provided that in addition to the assumptions of Theorem~\ref{thm:existence_in_nondecreasing_domains} we suppose that
$$
    u_o \in V_0
    \qquad \text{and}\qquad
    \int_\Omega f(x,u_o,Du_o) \,\dx < \infty.
$$
\end{remark}

Next, we define the function spaces
$$
    V^{p,0}(E)
    :=
    \{v \in L^p(0,T;W^{1,p}_0(\Omega,\R^N)) : v(t) \equiv 0 \text{ in } \Omega \setminus E^t \text{ for a.e. } t \in [0,T] \},
$$
and
\begin{align*}
    V_{q}^{p,0}(E):= V^{p,0}(E) \cap L^{q+1}(\Omega_{T}, \mathds{R}^{N}),
\end{align*}
equipped with the norms $\| v \|_{V^{p}(E)}$ and $\| v \|_{V^{p}_q(E)}$, respectively.
Under some additional conditions on $E$ we show that $\power{u}{q}$ possesses a distributional time derivative in the dual space $(V^{p,0}(E))^{\prime}$.
Indeed, we assume that for all $t \in [0,T)$, the complement $\Omega \setminus E^{t}$ satisfies a measure density condition, i.e., that there exists a constant $\delta>0$ such that there holds
\begin{align}\label{ineq:lower_measure_bound}
    \L^{n} \big(\big(\mathds{R}^{n} \setminus E^{t}\big) \cap B_{r}(x_o) \big) \ge \delta \L^{n}(B_{r}(x_o)) \quad \forall x_o \in \mathds{R}^{N} \setminus E^{t} \text{ and } \forall r >0.
\end{align}
Further, in addition to \eqref{eq:integrand}, we require that $f$ satisfies the $p$-growth condition
\begin{align}\label{adjusted standard-coercivity and p-growth condition}
    f(x,u,\xi) \le L \big(\vert \xi \vert^{p} + \vert u \vert^{p} + G(x) \big),
\end{align}
for a.e.~$x \in \Omega$ and all $(u, \xi) \in \mathds{R}^{N} \times \mathds{R}^{Nn}$, where $0 < \nu \le L$ and $G \in L^1(\Omega_{T})$.
Note that \eqref{adjusted standard-coercivity and p-growth condition} is a special case of \eqref{ineq:integrand_f_bound}.
In particular, under \eqref{adjusted standard-coercivity and p-growth condition}, the condition \eqref{compatibility_condition_for_u_o_and_u_ast}$_3$ holds for any $u_\ast \in W^{1,p}(\Omega,\R^N)$.
Moreover, adapting the proof of \cite[Lemma 2.1]{Marcellini}, we find that \eqref{eq:integrand} and \eqref{adjusted standard-coercivity and p-growth condition} imply the local Lipschitz condition
\begin{align}\label{ineq:Lipschitz_condition}
    \vert f(x,u_{1}, \xi_{1} ) - &f(x,u_{2}, \xi_{2} ) \vert \nonumber \\
    &\le c\Big[ (\vert \xi_{1} \vert + \vert \xi_{2} \vert + \vert u_{1} \vert + \vert u_{2} \vert)^{p-1} + \vert G(x) \vert^\frac{p-1}{p}  \Big](\vert u_{1} - u_{2} \vert + \vert \xi_{1} - \xi_{2} \vert )
\end{align} 
with a constant $c=c(p,L)$ for a.e.~$x \in \Omega$ and any $(u_{1}, \xi_{1}), (u_{2}, \xi_{2}) \in \mathds{R}^{N} \times \mathds{R}^{Nn}$.
Now, our result on the existence of a time derivative in the dual space reads as follows.

\begin{theorem}\label{thm:time_derivative_nondecreasing}
Assume that $E$ is a relatively open noncylindrical domain satisfying \eqref{nondecreasing_condition} and  \eqref{ineq:lower_measure_bound}, that the variational integrand $f \colon \Omega \times \mathds{R}^{N} \times \mathds{R}^{Nn} \to [0,\infty)$ fulfills \eqref{eq:integrand} and ~\eqref{adjusted standard-coercivity and p-growth condition}, and that $u_{\ast} \in W^{1,p}(\Omega, \mathds{R}^{N}) \cap L^{q+1}(\Omega, \mathds{R}^{N})$ and $u_o \in L^{q+1}(E^{0},\mathds{R}^{N})$.
Further, let $u$ be a variational solution to \eqref{eq:system} in the sense of Definition~\ref{Definition:variational_solution}.
Then, $\power{u}{q}$ admits a distributional time derivative
 \begin{align*}
     \partial_{t} \power{u}{q} \in 
     \big(V^{p,0}(E)\big)'
 \end{align*}
 with the estimate
\begin{align*}
     \Vert  \partial_{t} \power{u}{q} \Vert_{(V^{p,0}(E))^{\prime}} \le c(p,L) \Big[ \Vert  u \Vert_{V^{p}(E)}^{p} + \Vert G \Vert_{L^1(\Omega)}\Big]^{\frac{p-1}{p}}.
 \end{align*}
\end{theorem}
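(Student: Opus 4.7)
My plan is to extract the dual-space bound for $\partial_t \power{u}{q}$ from the variational inequality \eqref{eq:variational_inequality} by testing against perturbations of $u$ of the form $v=u_h+s\varphi$, and to turn the resulting integrands into the claimed estimate via the local Lipschitz bound \eqref{ineq:Lipschitz_condition}. By density of $C^\infty_c(E,\mathds{R}^N)$ in $V^{p,0}(E)$, it suffices to show
$$
  \left|\iint_E \power{u}{q}\cdot \partial_t \varphi\,\dx\dt\right|
  \le c\Bigl[\|u\|_{V^p(E)}^p + \|G\|_{L^1(E)}\Bigr]^{(p-1)/p}\|\varphi\|_{V^p(E)}
$$
for every smooth $\varphi$ with compact support in $E$.

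Given such a $\varphi$ and a parameter $s\in\mathds{R}$, I would take as comparison map $v:=u_h+s\varphi$, where $u_h(x,t):=\frac{1}{h}\int_{t-h}^{t}u(x,r)\,\dr$ is a backward Steklov average of $u$ in time (extended suitably for $t<h$). The nondecreasing property \eqref{nondecreasing_condition} together with the time-independence of $u_\ast$ ensures that $u(\cdot,r)=u_\ast$ on $\Omega\setminus E^t$ for every $r\le t$, so that $u_h(\cdot,t)\in V_t$ and $v$ is an admissible comparison map in \eqref{eq:variational_inequality}, with $\partial_t v=\partial_t u_h+s\,\partial_t\varphi\in L^{q+1}(\Omega_T,\mathds{R}^N)$ for each fixed $h>0$. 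The scheme is then to insert $v$ into \eqref{eq:variational_inequality} for a.e.\ $\tau$ close to $T$, pass $h\to 0$, divide by $|s|$, and let $s\to 0^\pm$.

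The volume contribution is the routine part. Writing
$$
  f(x,v,Dv)-f(x,u,Du)=\bigl[f(x,v,Dv)-f(x,u_h,Du_h)\bigr]+\bigl[f(x,u_h,Du_h)-f(x,u,Du)\bigr],
$$
the second bracket vanishes in $L^1(\Omega_T)$ as $h\to 0$ by \eqref{ineq:Lipschitz_condition} and the continuity of translation in $L^p$, while the first bracket divided by $s$ is controlled pointwise, again via \eqref{ineq:Lipschitz_condition}, by
$$
  c\Bigl[(|Du|+|u|)^{p-1}+|G|^{(p-1)/p}\Bigr]\bigl(|D\varphi|+|\varphi|\bigr).
$$
Integrating and applying Hölder's inequality with exponents $p/(p-1)$ and $p$ yields exactly the right-hand side of the target inequality.

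The main obstacle is the parabolic contribution, namely showing that
$$
  \iint\partial_t v\cdot\bigl(\power{v}{q}-\power{u}{q}\bigr)\dx\dt
  -\int_{E^\tau}\b[u(\tau),v(\tau)]\dx + \int_{E^0}\b[u_o,v(0)]\dx
$$
reduces in the limit $h\to 0$ to $s\iint_E \partial_t\varphi\cdot\power{u}{q}\,\dx\dt$ up to an $o(s)$ error. I plan to handle this through the Alt--Luckhaus-type convexity inequality
$$
  \power{u(t)}{q}\cdot\bigl(u(t)-u(t-h)\bigr)\ge \tfrac{1}{q+1}\bigl(|u(t)|^{q+1}-|u(t-h)|^{q+1}\bigr),
$$
which forces the terms containing $\partial_t u_h$ to cancel, up to vanishing errors as $h\to 0$, against the $\b$-contributions (using also the sign $\b[u,v](\tau)\ge 0$ from Young's inequality and that $\varphi$ vanishes near $\{t=0\}$ and $\{t=\tau\}$). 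Applying the resulting inequality once with $s>0$ and once with $s<0$ then yields the two-sided estimate for $\iint_E \power{u}{q}\cdot\partial_t\varphi$, and the conclusion $\partial_t \power{u}{q}\in (V^{p,0}(E))'$ with the stated norm bound follows by density.
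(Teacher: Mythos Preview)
Your approach is essentially the one the paper takes: test the variational inequality with a time-regularized version of $u$ perturbed by $s\varphi$, use convexity of $\frac{1}{q+1}|\cdot|^{q+1}$ to control the parabolic contribution, apply the Lipschitz bound \eqref{ineq:Lipschitz_condition} and H\"older's inequality for the volume term, and conclude by density (Remark~\ref{rem:C_0_inf_is_dense_in_V_p,0}). The paper packages the first half as a separate lemma (Lemma~\ref{lem:minimality_condition_for_var_sol}), showing that any variational solution is a parabolic minimizer in the sense of Definition~\ref{def:parabolic_minimizers}, and then derives the dual estimate from the minimality condition in a few lines.

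The only substantive difference is the choice of time mollification: the paper uses the exponential mollification $[u]_h$ from \eqref{eq:time_mollification} with regularized initial value $u_o^{(\epsilon)}$, whereas you use a backward Steklov average. Your choice works, but the phrase ``extended suitably for $t<h$'' hides the one delicate point. Extending $u$ by $u_o$ for negative times would make $\b[u_o,v(0)]$ vanish identically, but it destroys the $W^{1,p}$-regularity of $u_h$ on $(0,h)$, since $u_o$ is only assumed to lie in $L^{q+1}$; then $v=u_h+s\varphi$ is not an admissible comparison map in \eqref{eq:variational_inequality}. Extending instead by $u_\ast\in W^{1,p}\cap L^{q+1}$ repairs admissibility but leaves a residual term $\b[u_o,u_\ast]$ that does not vanish. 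The remedy is to extend by a regularized initial datum such as $u_o^{(\epsilon)}$ from \eqref{initial_value:u_0_epsilon} and send $\epsilon\downarrow 0$ at the end, exactly as the paper does; with the exponential mollification this step is built in via the choice of initial value in \eqref{eq:time_mollification}, which is why the paper's bookkeeping is somewhat cleaner.
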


\subsection{Methods of proof}

The proof of the existence result from Theorem~\ref{thm:existence_in_nondecreasing_domains} is given in Section~\ref{sec:existence}. Our main approach employs a nonlinear version of the method of minimizing movements. For this we combine the methods from \cite{BDMS18-2}, where doubly nonlinear equations were treated, and \cite{BDSS18}, which was concerned with noncylindrical domains. First, we apply a time discretization in order to divide the time interval $(0,T)$ into smaller ones with length $h>0$. Then we iteratively solve elliptic problems on each time slice by solving a minimization problem, yielding a sequence of minimizing functions. Next, we reformulate the minimizing property related to the considered elliptic problems, in order to develop energy estimates and consequently some weak convergence results in the upcoming steps. Subsequently, these minimizers are merged together to a piecewise constant function $u_\ell$ in time. Afterwards, we prove that the limit map of $u_\ell$ admits the correct boundary values, relying on the previously developed convergence results and the assumption $\L^{n+1}(\partial E)=0$. In this step and the upcoming one we exploit the fact that we are treating nondecreasing domains. We proceed by obtaining the almost everywhere convergence of $u_\ell$, which enables us to show that the limit map satisfies the variational inequality. This proves the convergence of these approximating functions $u_\ell$ to a solution of the parabolic problem in the noncylindrical domain. 

Since we treat nondecreasing domains, we can directly establish the continuity in time for any variational solution, cf.~Section~\ref{sec:continuity in time}. This is achieved by testing the variational inequality with a mollification of $u$ with respect to time. The boundary values are preserved, since the mollification is defined as a weighted mean of values from previous time steps.
We deduce that the mollifications converge uniformly in time to the variational solution, which implies that the solution is continuous in time, with respect to the $L^{q+1}$-norm.

Finally, we give the proof of Theorem~\ref{thm:time_derivative_nondecreasing} in Section~\ref{Section:Time derivative in the dual space(nondecreasing)} following the approach in \cite[Section 4.2.3]{BDSS18}. We show that any variational solution is a so-called parabolic minimizer and then use the minimality condition to establish the existence of the time derivative $\partial_t (\vert u \vert^{q-1} u)$ in the dual space.

\section{Preliminaries}
\subsection{Technical lemmas}
We start this section by deriving estimates that allow us to deal with powers of $u$ and the boundary term $\b[u,v]$.
For the following lemma, we refer to \cite[Lemma 8.3]{Giusti}; see also \cite[Lemma 3.2]{BDKS20} for the statement.
\begin{lemma}\label{ineq:power_alpha_estimate_q_greater_0}
For any $\alpha > 0$ and $u, v \in \mathds{R}^{N}$, there exists a constant $c(\alpha)$ such that
\begin{align*}
    \tfrac{1}{c(\alpha)} \vert \power{v}{\alpha} - \power{u}{\alpha} \vert \le  ( \vert u \vert + \vert v \vert )^{\alpha -1} \vert v - u \vert \le c(\alpha) \vert \power{v}{\alpha} - \power{u}{\alpha} \vert.
\end{align*}
\end{lemma}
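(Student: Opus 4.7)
The plan is to prove both inequalities simultaneously by studying the auxiliary map $\Phi \colon \mathds{R}^{N} \to \mathds{R}^{N}$, $\Phi(w) := |w|^{\alpha-1} w$, so that $\power{u}{\alpha} = \Phi(u)$, and by representing the difference $\Phi(v) - \Phi(u)$ as a line integral along the segment joining $u$ and $v$. After disposing of the trivial cases $u=0$ and $v=0$ (in which both sides of the lemma reduce to $|u|^{\alpha}$ or $|v|^{\alpha}$), I would compute
\begin{align*}
D\Phi(w) = |w|^{\alpha-1}\Bigl(\mathrm{Id} + (\alpha-1)\,|w|^{-2}\,w \otimes w\Bigr),
\end{align*}
whose eigenvalues are $|w|^{\alpha-1}$ (with multiplicity $N-1$) and $\alpha |w|^{\alpha-1}$ (with multiplicity $1$), yielding the two-sided bound
\begin{align*}
\min(1,\alpha)\,|w|^{\alpha-1}|\xi|^{2} \le \langle D\Phi(w)\xi,\xi\rangle \le \max(1,\alpha)\,|w|^{\alpha-1}|\xi|^{2}.
\end{align*}

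Next, setting $u_{t} := u + t(v-u)$ and applying the fundamental theorem of calculus (valid whenever $0$ is not on the open segment; the remaining case follows by a routine approximation), I would write $\Phi(v) - \Phi(u) = \int_{0}^{1} D\Phi(u_{t})(v-u)\,\dt$. The upper bound in the lemma then reduces, via the operator-norm estimate above, to controlling $\int_{0}^{1}|u_{t}|^{\alpha-1}\,\dt$ from above by $c(\alpha)(|u|+|v|)^{\alpha-1}$. The lower bound is obtained by testing the same identity with $(v-u)/|v-u|$ and using Cauchy--Schwarz, which gives
\begin{align*}
|\Phi(v) - \Phi(u)| \ge \min(1,\alpha)\,|v-u|\int_{0}^{1}|u_{t}|^{\alpha-1}\,\dt,
\end{align*}
so matters reduce to the matching lower bound $\int_{0}^{1}|u_{t}|^{\alpha-1}\,\dt \ge c(\alpha)(|u|+|v|)^{\alpha-1}$.

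Both estimates of this auxiliary integral split according to whether $\alpha \ge 1$ or $\alpha \in (0,1)$, driven by the monotonicity of $s \mapsto s^{\alpha-1}$. When $\alpha \ge 1$, the upper bound is immediate from $|u_{t}| \le |u|+|v|$; assuming without loss of generality $|v| \ge |u|$, the lower bound follows from $|u_{t}| \ge t|v| - (1-t)|u| \ge (2t-1)|v|$ on $t \in [3/4,1]$, which forces the integrand to exceed $c|v|^{\alpha-1} \simeq (|u|+|v|)^{\alpha-1}$ on a set of fixed positive measure. When $\alpha \in (0,1)$ the roles reverse: the pointwise inequality $|u_{t}|^{\alpha-1} \ge (|u|+|v|)^{\alpha-1}$ makes the lower bound trivial.

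The main obstacle is the remaining upper bound in the case $\alpha \in (0,1)$, since the integrand $|u_{t}|^{\alpha-1}$ may become singular if the segment passes near the origin (worst case $u=-v$). I would resolve this by writing $|u_{t}|^{2}$ as an explicit quadratic in $t$, locating its minimum, and using that the exponent $\alpha-1 > -1$ keeps the singularity integrable, so that $\int_{0}^{1}|u_{t}|^{\alpha-1}\,\dt \le c(\alpha)(|u|+|v|)^{\alpha-1}$ follows by a direct computation. Alternatively, the scaling $(u,v) \mapsto (\lambda u, \lambda v)$ multiplies both sides of the target inequality by $\lambda^{\alpha}$, so one may normalize $|u|+|v|=1$ and conclude from continuity and compactness on the corresponding compact subset of $\mathds{R}^{N} \times \mathds{R}^{N}$.
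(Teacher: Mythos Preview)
The paper does not actually prove this lemma; it merely refers to \cite[Lemma~8.3]{Giusti} (and \cite[Lemma~3.2]{BDKS20} for the statement). Your sketch supplies precisely the standard line-integral argument that underlies those references, and the outline is correct: the derivative and eigenvalue computation for $\Phi(w)=|w|^{\alpha-1}w$ is right, the reduction of both inequalities to two-sided bounds on $\int_0^1|u_t|^{\alpha-1}\,\dt$ is sound, and the easy cases ($\alpha\ge1$ upper, $\alpha<1$ lower) are handled as you say.

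One remark on the case you flag as the main obstacle (the upper bound on $\int_0^1|u_t|^{\alpha-1}\,\dt$ for $\alpha\in(0,1)$): both of your proposed resolutions work in principle, but there is a cleaner route than the full quadratic analysis or the compactness argument (which would require some care near the diagonal $u=v$). Assuming $|v|\ge|u|$ and writing $r:=|u|/|v|\in[0,1]$, the reverse triangle inequality gives
\[
  |u_t|=|(1-t)u+tv|\;\ge\;\big|\,t|v|-(1-t)|u|\,\big|\;=\;|v|\,\big|t(1+r)-r\big|,
\]
so that
\[
  \int_0^1|u_t|^{\alpha-1}\,\dt
  \;\le\;
  \frac{|v|^{\alpha-1}}{1+r}\int_{-r}^{1}|s|^{\alpha-1}\,\ds
  \;=\;
  \frac{|v|^{\alpha-1}}{(1+r)\alpha}\big(r^\alpha+1\big)
  \;\le\;
  \tfrac{2}{\alpha}\,|v|^{\alpha-1}
  \;\le\;
  \tfrac{2^{2-\alpha}}{\alpha}(|u|+|v|)^{\alpha-1}.
\]
This avoids locating the true minimizer of $|u_t|$ and makes the integrability for $\alpha-1>-1$ explicit.
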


As a consequence of the preceding lemma, we obtain the following inequality; see also \cite[Lemma 3.3]{BDKS20}.
\begin{lemma}\label{ineq:power_alpha_estimate}
For any $\alpha > 1$ and $u, v \in \mathds{R}^{N}$, there exists a constant $c(\alpha)$ such that
\begin{align*}
    \vert v - u \vert^{\alpha} \le c(\alpha) \vert \power{v}{\alpha} - \power{u}{\alpha} \vert. 
\end{align*}
\end{lemma}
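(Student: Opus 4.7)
The plan is to reduce this immediately to the preceding Lemma \ref{ineq:power_alpha_estimate_q_greater_0}. That lemma already provides the pointwise comparison
\[
(|u|+|v|)^{\alpha-1} |v-u| \leq c(\alpha)\, \bigl|\power{v}{\alpha} - \power{u}{\alpha}\bigr|,
\]
so the only thing missing is to replace the factor $(|u|+|v|)^{\alpha-1}$ on the left-hand side by $|v-u|^{\alpha-1}$. Since $\alpha > 1$, the triangle inequality gives $|v-u| \leq |u|+|v|$, and raising to the power $\alpha-1 \geq 0$ preserves the inequality.

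Therefore I would simply write
\[
|v-u|^{\alpha} = |v-u|^{\alpha-1}\,|v-u| \leq (|u|+|v|)^{\alpha-1}\,|v-u| \leq c(\alpha)\, \bigl|\power{v}{\alpha} - \power{u}{\alpha}\bigr|,
\]
where the last inequality is the right-hand bound in Lemma \ref{ineq:power_alpha_estimate_q_greater_0}. The trivial cases $u=v$ or $u=v=0$ are covered automatically since both sides vanish.

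There is really no obstacle here; the assumption $\alpha>1$ is used exactly once, to ensure the exponent $\alpha-1$ is nonnegative so that monotonicity of $t \mapsto t^{\alpha-1}$ on $[0,\infty)$ applies. The constant $c(\alpha)$ is the same as the one produced by Lemma \ref{ineq:power_alpha_estimate_q_greater_0}.
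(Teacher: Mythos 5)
Your proof is correct and is exactly the intended derivation: the paper itself only remarks that the lemma is ``a consequence of the preceding lemma'' (citing \cite{BDKS20}), and the step you supply --- bounding $|v-u|^{\alpha-1}$ by $(|u|+|v|)^{\alpha-1}$ via the triangle inequality and monotonicity of $t\mapsto t^{\alpha-1}$ for $\alpha>1$, then invoking the right-hand inequality of Lemma~\ref{ineq:power_alpha_estimate_q_greater_0} --- is precisely the standard argument.
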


In turn, the preceding statement leads us to the following lemma.
\begin{lemma} \label{lem:technical_lemma_2}
For any $q \in (0,\infty)$ and $u,v \in \R^N$, we have that
\begin{align*}
	\vert u-v \vert^{q+1}
	\leq
	c(q) \Big(\vert u \vert^\frac{q+1}{2} + \vert v \vert^\frac{q+1}{2} \Big) \Big\vert \power{u}{\frac{q+1}{2}} - \power{v}{\frac{q+1}{2}} \Big\vert.
\end{align*}
\end{lemma}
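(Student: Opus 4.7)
Let me outline my approach. Set $\alpha := \frac{q+1}{2}$, so that $2\alpha-1 = q$. The strategy is to apply Lemma~\ref{ineq:power_alpha_estimate_q_greater_0} with this exponent $\alpha$, then recover the missing factor of $|u-v|^q$ by multiplying through by $|u|^\alpha + |v|^\alpha$ and using the comparability between the sum of powers and the power of the sum.

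First I would invoke Lemma~\ref{ineq:power_alpha_estimate_q_greater_0} to get
\[
(|u|+|v|)^{\alpha-1}|v-u| \leq c(\alpha) \big| \power{v}{\alpha} - \power{u}{\alpha} \big|.
\]
Multiplying both sides by $|u|^\alpha + |v|^\alpha$ produces the right-hand side of the target inequality (up to a constant), so what remains is to show
\[
|u-v|^{q+1} \leq c(q)\, (|u|^\alpha + |v|^\alpha)\, (|u|+|v|)^{\alpha-1}\,|v-u|.
\]
Cancelling one factor of $|v-u|$ reduces this to the elementary bound
\[
|u-v|^{q} \leq c(q)\,(|u|^\alpha + |v|^\alpha)\,(|u|+|v|)^{\alpha-1}.
\]

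Next I would verify this elementary bound. The factor $|u|^\alpha + |v|^\alpha$ is comparable to $(|u|+|v|)^\alpha$: for $\alpha \geq 1$ one has $|u|^\alpha + |v|^\alpha \geq 2^{1-\alpha}(|u|+|v|)^\alpha$, while for $0 < \alpha < 1$ one has $|u|^\alpha + |v|^\alpha \geq (|u|+|v|)^\alpha$ by subadditivity. Hence $(|u|^\alpha+|v|^\alpha)(|u|+|v|)^{\alpha-1} \geq c(q)(|u|+|v|)^{2\alpha-1} = c(q)(|u|+|v|)^{q}$, and since $q \geq 0$ the triangle inequality gives $(|u|+|v|)^q \geq |u-v|^q$. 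Combining these observations closes the argument.

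I do not anticipate a genuine obstacle: the whole proof is a two-line rearrangement of Lemma~\ref{ineq:power_alpha_estimate_q_greater_0} together with the power-sum comparison. The only point requiring a moment of care is splitting the constant into the two sub-cases $\alpha \geq 1$ (equivalently $q \geq 1$) and $0 < \alpha < 1$ (equivalently $0 < q < 1$) when comparing $|u|^\alpha + |v|^\alpha$ with $(|u|+|v|)^\alpha$; aside from that, constants depend only on $q$ as required.
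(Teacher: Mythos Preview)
Your argument is correct. It is, however, a genuinely different route from the paper's proof. You apply Lemma~\ref{ineq:power_alpha_estimate_q_greater_0} directly with exponent $\alpha=\tfrac{q+1}{2}$, then recover the missing power of $|u-v|$ via the elementary comparison $|u|^{\alpha}+|v|^{\alpha}\ge c(q)(|u|+|v|)^{\alpha}$ together with $(|u|+|v|)^{q}\ge|u-v|^{q}$. The paper instead first invokes Lemma~\ref{ineq:power_alpha_estimate} with exponent $q+1$ to obtain $|u-v|^{q+1}\le c(q)\,|\power{u}{q+1}-\power{v}{q+1}|$, and then uses the algebraic identity $\power{w}{q+1}=|w|^{\frac{q+1}{2}}\power{w}{\frac{q+1}{2}}$, an add--subtract step, and the reverse triangle inequality to factor the right-hand side as $(|u|^{\frac{q+1}{2}}+|v|^{\frac{q+1}{2}})\,|\power{u}{\frac{q+1}{2}}-\power{v}{\frac{q+1}{2}}|$. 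The paper's version avoids your case split $\alpha\ge1$ versus $\alpha<1$ and is slightly more self-contained, while your approach goes straight to the target exponent and keeps everything at the level of elementary power inequalities; both yield constants depending only on $q$.
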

\begin{proof}
By Lemma \ref{ineq:power_alpha_estimate} with $\alpha$ replaced by $q+1$, the triangle inequality and the reverse triangle inequality, we find that
\begin{align*}
	|u-v|^{q+1}
	&\leq
	c(q) \big| \boldsymbol{u}^{q+1} - \boldsymbol{v}^{q+1} \big| \\
	&=
	c(q) \Big| |u|^\frac{q+1}{2} \power{u}{\frac{q+1}{2}} - |v|^\frac{q+1}{2} \power{u}{\frac{q+1}{2}} + |v|^\frac{q+1}{2} \power{u}{\frac{q+1}{2}} - |v|^\frac{q+1}{2} \power{v}{\frac{q+1}{2}} \Big| \\
	&\leq
	c(q) \Big( |u|^\frac{q+1}{2} \Big| |u|^\frac{q+1}{2} - |v|^\frac{q+1}{2} \Big|
	+ |v|^\frac{q+1}{2} \Big| \power{u}{\frac{q+1}{2}} - \power{v}{\frac{q+1}{2}} \Big| \Big) \\
	&\leq
	c(q) \Big( |u|^\frac{q+1}{2} + |v|^\frac{q+1}{2} \Big) \Big| \power{u}{\frac{q+1}{2}} - \power{v}{\frac{q+1}{2}} \Big|.
	\qedhere
\end{align*}
\end{proof}

Next, we have the following lemma.
\begin{lemma}
\label{lem:technical_lemma}
For any $q \in (0,\infty)$ and $u,v \in \R^N$, we have that
\begin{align*}
	\Big| \power{u}{\frac{q+1}{2}} - \power{v}{\frac{q+1}{2}} \Big|^2
	\leq
	c(q) \b[u,v]
	\leq
	c(q) \big( \power{v}{q} - \power{u}{q} \big) \cdot (v-u).
\end{align*}
\end{lemma}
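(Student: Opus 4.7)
The estimates rest on the observation that $\b[u,v]$ is the Bregman distance associated with the convex function $\phi(w):=\tfrac{1}{q+1}|w|^{q+1}$, whose gradient is $D\phi(w) = \power{w}{q}$; explicitly, $\b[u,v] = \phi(v) - \phi(u) - D\phi(u)\cdot(v-u)$. The plan is to work with the scalar auxiliary function $\psi(s) := \phi(u+s(v-u))$ on $[0,1]$, which is convex and satisfies $\psi''(s) \ge 0$. Taylor's formula with integral remainder then yields
\begin{align*}
\b[u,v]
= \psi(1) - \psi(0) - \psi'(0)
= \int_0^1 (1-s)\psi''(s)\,\ds,
\end{align*}
while one fewer integration gives
\begin{align*}
\big(\power{v}{q} - \power{u}{q}\big)\cdot(v-u)
= \psi'(1) - \psi'(0)
= \int_0^1 \psi''(s)\,\ds.
\end{align*}
Since $0 \le 1-s \le 1$ and $\psi'' \ge 0$, the right inequality follows immediately with constant one; a short regularisation argument (replacing $\phi$ by a smooth approximant) handles the reduced $C^2$-regularity of $\psi$ when $0<q<1$ and the segment $[u,v]$ passes through the origin.

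For the left inequality, I would use the explicit formula $D^2\phi(w) = |w|^{q-1}I + (q-1)|w|^{q-3}w\otimes w$ together with Cauchy--Schwarz to derive the pointwise lower bound
\begin{align*}
\psi''(s) \ge \min\{1,q\}\,|u+s(v-u)|^{q-1}|v-u|^2
\end{align*}
valid for all $s \in [0,1]$ (for $q\ge 1$ just drop the nonnegative second term in $\psi''$; for $0<q<1$ use $(w\cdot(v-u))^2 \le |w|^2|v-u|^2$ to absorb it). Inserting this estimate into the Taylor representation of $\b[u,v]$ above leads to
\begin{align*}
\b[u,v] \ge c(q)\,|v-u|^2 \int_0^1(1-s)|u+s(v-u)|^{q-1}\,\ds
\ge c(q)(|u|+|v|)^{q-1}|v-u|^2.
\end{align*}
Finally, applying Lemma~\ref{ineq:power_alpha_estimate_q_greater_0} with $\alpha = \tfrac{q+1}{2}$ gives $\big|\power{u}{(q+1)/2} - \power{v}{(q+1)/2}\big| \le c(q)(|u|+|v|)^{(q-1)/2}|v-u|$; squaring and combining with the previous display yields the left inequality.

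The decisive technical step is the elementary integral bound
\begin{align*}
\int_0^1(1-s)|u+s(v-u)|^{q-1}\,\ds \ge c(q)(|u|+|v|)^{q-1},
\end{align*}
which must hold uniformly in $(u,v) \ne (0,0)$, including configurations where the segment from $u$ to $v$ passes through (or very close to) the origin. Under the scaling $(u,v) \mapsto (\lambda u, \lambda v)$ both sides transform as $\lambda^{q-1}$, so it suffices to prove the estimate on the compact set $\{(u,v) \in \R^N \times \R^N : |u|+|v|=1\}$. For $q\ge 1$ the integrand is bounded and continuous in $(u,v)$, so pointwise positivity together with compactness yields a uniform constant. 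For $0<q<1$ the integrand is singular but integrable, since $|u+s(v-u)| \sim |v-u||s-s_0|$ near a possible zero $s_0$ and $q-1>-1$; lower semicontinuity via Fatou's lemma together with pointwise positivity and compactness again closes the argument. This case distinction between $q \ge 1$ and $0 < q < 1$ is the only genuinely delicate ingredient.
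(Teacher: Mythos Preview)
Your argument is correct. The paper takes a different, much shorter route: for the second inequality it uses the one-line gradient inequality for the convex function $w\mapsto\tfrac1{q+1}|w|^{q+1}$ at the point $v$, namely $\tfrac1{q+1}|v|^{q+1}-\tfrac1{q+1}|u|^{q+1}\le \power{v}{q}\cdot(v-u)$, which after subtracting $\power{u}{q}\cdot(v-u)$ gives $\b[u,v]\le(\power{v}{q}-\power{u}{q})\cdot(v-u)$ directly; for the first inequality it simply cites \cite[Lemma~3.4]{BDKS20}. Your approach is a unified, self-contained second-derivative computation: the Taylor representation $\b[u,v]=\int_0^1(1-s)\psi''(s)\,\ds$ handles both inequalities at once, and the bulk of your effort goes into the uniform lower bound $\int_0^1(1-s)|u+s(v-u)|^{q-1}\,\ds\ge c(q)(|u|+|v|)^{q-1}$ via scaling and compactness. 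The paper's proof is more economical because it outsources the hard estimate; yours has the advantage of being fully self-contained and of making the constant in the second inequality visibly equal to $1$, but at the cost of the somewhat delicate compactness/Fatou argument in the range $0<q<1$.
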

\begin{proof}
For the first inequality, we refer to \cite[Lemma 3.4]{BDKS20} with $p$ replaced by $q+1$.
Since $w \mapsto \frac{1}{q+1} |w|^{q+1} \in C^1(\R^N)$ is convex with $\frac{1}{q+1} D |w|^{q+1} = \power{w}{q}$, we have that
\begin{align*}
	\tfrac{1}{q+1} |v|^{q+1} - \tfrac{1}{q+1} |u|^{q+1}
	\leq
	\power{v}{q} \cdot (v-u).
\end{align*}
Recalling the definition of $\b[u,v]$, this implies the claim.
\end{proof}

For the proof of the subsequent estimates, cf.~\cite[Lemma 2.1]{Schaetzler}; see also \cite[Lemma 2.4]{BDMS18-2} for the special case $N=1$.
\begin{lemma}
\label{lem:b_and_abs_u_estimate}
    Let $u,v \in \mathds{R}^{N}$. Then, we have that
    \begin{equation*}
        \tfrac{1}{q+1} \vert v \vert^{q+1} \le 2 \b[u,v] +2^{2+\frac{1}{q}} \tfrac{q}{q+1}\vert u \vert^{q+1}.
    \end{equation*}
\end{lemma}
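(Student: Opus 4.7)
My plan is to unwind the definition of $\b[u,v]$ and reduce the inequality to a weighted Young's inequality for the cross term $\power{u}{q}\cdot v$.

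More specifically, recalling
\begin{align*}
\b[u,v] = \tfrac{1}{q+1}|v|^{q+1} + \tfrac{q}{q+1}|u|^{q+1} - \power{u}{q}\cdot v,
\end{align*}
the asserted inequality is algebraically equivalent to
\begin{align*}
2\,\power{u}{q}\cdot v \le \tfrac{1}{q+1}|v|^{q+1} + \tfrac{q}{q+1}\bigl(2 + 2^{2+1/q}\bigr)|u|^{q+1}.
\end{align*}
Since $|\power{u}{q}\cdot v| \le |u|^{q}|v|$, it suffices to estimate $2|u|^{q}|v|$. I would apply the standard Young inequality with conjugate exponents $q+1$ and $\frac{q+1}{q}$, applied to the pair $(|v|, 2|u|^{q})$, which yields
\begin{align*}
2|u|^{q}|v| \le \tfrac{1}{q+1}|v|^{q+1} + \tfrac{q}{q+1}\,2^{1+1/q}|u|^{q+1}.
\end{align*}
Since $2^{1+1/q} \le 2 + 2^{2+1/q}$ trivially, this finishes the argument.

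There is no real obstacle here, as the statement is essentially an algebraic identity combined with a single application of Young's inequality with the correct splitting to make the $|v|^{q+1}$ coefficient equal $\tfrac{1}{q+1}$. The only point requiring mild attention is bookkeeping of the constant $2^{2+1/q}$, which is deliberately chosen to absorb the Young constant $2^{1+1/q}$ after multiplication by $2$. I would present the proof as a three-line chain of inequalities following the definition of $\b[u,v]$, with a brief remark indicating the choice of exponents in Young's inequality.
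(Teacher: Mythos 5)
Your proof is correct: the reduction of the claimed inequality to the bound $2\,\power{u}{q}\cdot v \le \tfrac{1}{q+1}|v|^{q+1} + \tfrac{q}{q+1}(2+2^{2+1/q})|u|^{q+1}$ is an exact algebraic rearrangement of the definition of $\b[u,v]$, the Cauchy--Schwarz step and Young's inequality with exponents $q+1$ and $\tfrac{q+1}{q}$ applied to $(|v|,2|u|^q)$ give precisely the constant $2^{1+1/q}$, and $2^{1+1/q}\le 2^{2+1/q}$ closes the argument. The paper itself gives no proof but cites Lemma~2.1 of Sch\"atzler, whose argument is the same standard Young-inequality estimate, so your approach matches.
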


Next, we state the following convergence result.
\begin{lemma}\label{lem:strong_L^q+1_convergence}
    Let $(u_{k})_{k \in \mathds{N}} \subset L^{q+1}(\Omega, \mathds{R}^{N})$  be a sequence and $u_\infty \in L^{q+1}(\Omega, \mathds{R}^{N})$, such that $\boldsymbol{u}_{\boldsymbol{k}}^{q+1} \to \boldsymbol{u}_{\boldsymbol{\infty}}^{q+1}$ in $L^{1}(\Omega, \mathds{R}^{N})$, for $k \to \infty$.
    Then, we have that $u_{k} \to u_\infty$ in $L^{q+1}(\Omega, \mathds{R}^{N})$.
    Moreover, if $u_{k} \to u_\infty$ in $L^{q+1}(\Omega, \mathds{R}^{N})$, for $k \to \infty$, then we have the convergence $\power{u_{k}}{q} \to \power{u_\infty}{q}$ in $L^{\frac{q+1}{q}}(\Omega, \mathds{R}^{N})$.
\end{lemma}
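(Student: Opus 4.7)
The first implication is a direct consequence of Lemma~\ref{ineq:power_alpha_estimate} applied with $\alpha = q+1 > 1$. It delivers the pointwise bound $|u_k - u_\infty|^{q+1} \leq c(q)\bigl|\power{u_k}{q+1} - \power{u_\infty}{q+1}\bigr|$, and integration over $\Omega$ combined with the $L^1$-convergence hypothesis immediately yields $u_k \to u_\infty$ in $L^{q+1}(\Omega,\R^N)$.

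For the second implication, my starting point is Lemma~\ref{ineq:power_alpha_estimate_q_greater_0} with $\alpha = q$, which produces the pointwise estimate
\[
    \bigl|\power{u_k}{q} - \power{u_\infty}{q}\bigr| \leq c(q)\,(|u_k|+|u_\infty|)^{q-1}\,|u_k - u_\infty|.
\]
Since $|\power{u}{q}|^{(q+1)/q} = |u|^{q+1}$, the plan is to raise this bound to the power $(q+1)/q$ and integrate. The sign of $q-1$ dictates how to handle the factor $(|u_k|+|u_\infty|)^{q-1}$, so I would split into two cases. For $q \geq 1$ (the case $q=1$ being trivial since then $\power{u}{q} = u$), I apply Hölder's inequality with exponents $q/(q-1)$ and $q$ to the resulting integral: the first factor reduces to $\|\,|u_k|+|u_\infty|\,\|_{L^{q+1}}^{(q-1)(q+1)/q}$, which remains bounded by the $L^{q+1}$-boundedness of $(u_k)$, while the second factor is a positive power of $\|u_k - u_\infty\|_{L^{q+1}}$ and therefore tends to zero.

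For $0 < q < 1$ the factor $(|u_k|+|u_\infty|)^{q-1}$ is singular where the functions vanish, so the Hölder argument breaks down. The remedy is to first upgrade Lemma~\ref{ineq:power_alpha_estimate_q_greater_0} into the Hölder-continuity bound $\bigl|\power{v}{q} - \power{u}{q}\bigr| \leq c(q)|v-u|^q$, which follows by inserting the pointwise inequality $|v-u| \leq |u|+|v|$ raised to the negative exponent $q-1$ into the above estimate. Integrating the $(q+1)/q$-th power of this refined inequality reduces $L^{(q+1)/q}$-convergence of $\power{u_k}{q}$ directly to $L^{q+1}$-convergence of $u_k$. I expect the only real friction in the argument to be precisely this case split: the pointwise bound one naturally writes down is not integrable via Hölder in the subquadratic regime and must first be massaged via the triangle inequality.
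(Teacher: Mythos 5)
Your proof is correct, and for $0<q<1$ it coincides with the paper's argument: the bound $\vert\power{v}{q}-\power{u}{q}\vert\le c(q)\vert v-u\vert^{q}$ that you rederive from Lemma~\ref{ineq:power_alpha_estimate_q_greater_0} is exactly what the paper obtains by applying Lemma~\ref{ineq:power_alpha_estimate} with $\alpha=\tfrac1q>1$ to the pair $\power{u_k}{q},\power{u_\infty}{q}$, and the first implication is handled identically via Lemma~\ref{ineq:power_alpha_estimate} with $\alpha=q+1$. The only divergence is in the case $q>1$ of the second implication, which the paper does not prove at all but delegates to \cite[Lemma~2.4]{Schaetzler}; there you supply a self-contained argument, combining the pointwise estimate from Lemma~\ref{ineq:power_alpha_estimate_q_greater_0} with $\alpha=q$ and H\"older's inequality with exponents $\tfrac{q}{q-1}$ and $q$, using the boundedness of $(u_k)$ in $L^{q+1}(\Omega,\R^N)$. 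That computation is correct (the exponents are conjugate and both factors behave as you claim), so your write-up is, if anything, more complete than the paper's.
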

\begin{proof}
For the case $q>1$, we refer to \cite[Lemma~2.4]{Schaetzler}.
Thus, it remains to consider the case $0<q<1$.
    The first part of the lemma can be deduced by applying Lemma ~\ref{ineq:power_alpha_estimate} with $\alpha = q+1$, which yields
    \begin{align*}
        \int_{\Omega} \vert u_{k} - u_\infty \vert^{q+1} \,\dx \le c(q) \int_{\Omega} \vert \boldsymbol{u}_{\boldsymbol{k}}^{q+1} - \boldsymbol{u}_{\boldsymbol{\infty}}^{q+1} \vert \,\dx.
    \end{align*}
    For the second part of the lemma, we apply Lemma \ref{ineq:power_alpha_estimate} with $\alpha = \tfrac{1}{q}>1$. Hence, we have that
    \begin{align*}
        \int_{\Omega} \big\vert \power{u_{k}}{q} - \power{u_\infty}{q} \big\vert^{\frac{q+1}{q}} \,\dx
        &\le
        c(q) \int_{\Omega} \big\vert (\power{u_{k}}{q})^{\frac{1}{q}} - (\power{u_\infty}{q})^{\frac{1}{q}} \big\vert^{q+1} \,\dx \\
        &=
        c(q)\int_{\Omega} \vert u_{k} - u_\infty \vert^{q+1} \,\dx \to 0
    \end{align*}
    as $k \to \infty$, which concludes the proof.
\end{proof}

We proceed with a Hardy inequality, which holds under a weaker assumption than the measure density condition~\eqref{ineq:lower_measure_bound}.
More precisely, denoting the variational $p$-capacity as defined in \cite[Def.~5.32]{KiLeVa} by $\ca_p$, we rely on the following concept.
\begin{definition} \label{def:p-fat}
We say that a set $A \subset \R^n$ is uniformly $p$-fat, if there exists a constant $\alpha > 0$ such that
$$
\ca_p\big(A \cap \overline{B_\rho(x)}, B_{2\rho}(x)\big) \geq \alpha \ca_p\big(\overline{B_\rho(x)}, B_{2\rho}(x)\big) 
$$
holds true for every $x \in A$ and $\rho > 0$.
\end{definition}

Since Hardy's inequality only applies to functions with $u(t)\in W^{1,p}_0(E^t)$ for a.e.~$t\in[0,T)$, we consider the subspace of $V^{p,0}(E)$ given by
\begin{equation}
\mathcal{V}^{p,0}(E):=\big\{u\in V^{p,0}(E)\colon u(t)\in
W^{1,p}_0(E^t,\R^N) \mbox{ for a.e.~$t\in[0,T)$} \big\}.
\label{eq:curly-Vp0}
\end{equation}

\begin{remark}\label{rem:V=V}
The measure density condition~\eqref{ineq:lower_measure_bound} with a constant $\delta>0$ implies that $\R^n\setminus E^t$ is uniformly $p$-fat with a parameter $\alpha=\alpha(n,p,\delta)>0$, see, e.g. \cite[Example 6.18]{KiLeVa}.
Further, under the  measure density condition we have that $\mathcal{V}^{p,0}_q(E)=V^{p,0}_q(E)$, cf.~\cite[Lemma~3.1]{BDSS18}. 
\end{remark}

For the proof of the following lemma we refer to \cite{KiMa} or \cite[Theorem 6.25]{KiLeVa}.
\begin{lemma}[Hardy's inequality]\label{lem:Hardy_inequality}
  Let $u \in \mathcal{V}^{p,0}(E)$ and assume that $\R^n\setminus E^t$
  is uniformly $p$-fat with a parameter $\alpha>0$ for a.e.~$t\in[0,T)$.  
  Then there exists a constant $c(n, p, \alpha)>0$
  such that the Hardy inequality
    \begin{align}\label{ineq:Hardy_inequality}
        \int_{E^t} \bigg( \frac{\vert u(x,t) \vert}{ \dist(x, \partial E^t)  } \Bigg)^{p} \,\dx
        \le
        c  \int_{E^t} \vert Du(x,t) \vert^{p} \,\dx 
    \end{align}
    holds for a.e.~$t \in [0,T)$.
\end{lemma}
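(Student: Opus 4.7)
The plan is to reduce the spatio-temporal inequality to the classical stationary Hardy inequality applied slicewise. By the defining condition of $\mathcal{V}^{p,0}(E)$ in \eqref{eq:curly-Vp0}, for almost every $t\in[0,T)$ we have $u(\cdot,t)\in W^{1,p}_0(E^t,\R^N)$. It therefore suffices to prove, for every open $\Omega_0\subset\R^n$ whose complement is uniformly $p$-fat with parameter $\alpha$, the stationary Hardy inequality
\begin{equation*}
  \int_{\Omega_0}\bigg(\frac{|v(x)|}{\dist(x,\partial\Omega_0)}\bigg)^p\,\dx \le c(n,p,\alpha)\int_{\Omega_0}|Dv|^p\,\dx
\end{equation*}
for every $v\in W^{1,p}_0(\Omega_0,\R^N)$, and then apply it with $\Omega_0=E^t$ and $v=u(\cdot,t)$. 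Since the resulting constant depends only on $n$, $p$, and $\alpha$, and not on $t$, no further integration in time is needed. For vector-valued $v=(v^1,\dots,v^N)$ one may apply the scalar case to each component $v^i\in W^{1,p}_0(\Omega_0)$ and sum, using the equivalence of the Euclidean and $\ell^p$-norms on $\R^N$; hence only the scalar version needs to be addressed.

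For the scalar stationary inequality, I would follow the potential-theoretic approach. The crucial input is Lewis's self-improvement theorem: uniform $p$-fatness of $\R^n\setminus\Omega_0$ with parameter $\alpha$ implies uniform $q$-fatness for some $q=q(n,p,\alpha)\in(1,p)$. Combined with a capacitary Poincaré estimate on balls that intersect a $q$-fat complement, this yields, after an application of Hölder's inequality, the Poincaré-type bound
\begin{equation*}
    \int_{B_r(x)}|v|^p\,\dx \le c\,r^p\int_{B_{2r}(x)}|Dv|^p\,\dx
\end{equation*}
whenever $B_r(x)\subset\R^n$ with $B_{2r}(x)\cap(\R^n\setminus\Omega_0)\ne\emptyset$, for any $v\in W^{1,p}_0(\Omega_0)$ extended by zero. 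A Whitney decomposition $\Omega_0=\bigcup_i Q_i$ with $\diam(Q_i)\sim\dist(Q_i,\partial\Omega_0)=:d_i$ then allows this estimate to be applied on a ball centred at the centre of each $Q_i$ with radius proportional to $d_i$, which necessarily reaches the boundary. This produces $\int_{Q_i}|v|^p\,\dx\le c\,d_i^p\int_{Q_i^*}|Dv|^p\,\dx$ for a bounded dilate $Q_i^*$ of $Q_i$; summing over $i$ and using $\dist(\cdot,\partial\Omega_0)\sim d_i$ on $Q_i$ together with the finite overlap of the $Q_i^*$ finishes the proof.

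The main obstacle is the self-improvement step, whose proof rests on delicate potential-theoretic estimates for $p$-superharmonic functions and is precisely the contribution of the cited references \cite{KiMa,KiLeVa}; the Whitney covering portion is comparatively routine. Since the whole scheme depends on $E^t$ only through $n$, $p$, and the fatness parameter (which by Remark~\ref{rem:V=V} is uniform in $t$ under the measure density condition~\eqref{ineq:lower_measure_bound}), the resulting constant in \eqref{ineq:Hardy_inequality} is indeed independent of $t$, as required.
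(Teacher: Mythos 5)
Your proposal is correct and matches the paper's approach: the paper gives no proof of its own but simply applies the classical stationary Hardy inequality under uniform $p$-fatness (citing \cite{KiMa} and \cite[Theorem 6.25]{KiLeVa}) slicewise in time, exactly as you do, with the slicewise reduction being immediate from the definition of $\mathcal{V}^{p,0}(E)$ and the $t$-independence of the constant. Your sketch of the stationary case via Lewis's self-improvement and a Whitney decomposition is precisely the argument of those references, so there is nothing to add.
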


Next, we introduce the finite difference quotient $\Delta_{h}f$ for a function $f$ with respect to time, with $h \in \mathds{R} \setminus \{0\}$, by
\begin{align*}
    \Delta_{h}f(t):= \tfrac{1}{h}(f(t+h)-f(t))
\end{align*}
and state the following integration by parts formula.
We refer to~\cite[Lemma 2.11]{Schaetzler}, see also \cite[Lemma 2.10]{BDMS18-2} for the case $N=1$ and non-negative solutions.
\begin{lemma}\label{lem:finite_integration_by_parts_formula}
    Let $h \in (0,1]$, and $u,v \in L^{q+1}(\Omega \times (-h,T+h), \mathds{R}^{N})$. Then, the following finite integration by parts formula
    \begin{align*}
        \iint_{\Omega_{T}} \Delta_{-h} \power{u}{q} \cdot (v-u) \,\dx\dt \le& \iint_{\Omega_{T}} \Delta_{h} v \cdot (\power{v}{q}-\power{u}{q}) \,\dx\dt \nonumber \\
        &-\tfrac{1}{h} \iint_{\Omega \times (T-h,T)} \b[u(t),v(t+h)] \,\dx\dt \nonumber\\
        &+\tfrac{1}{h} \iint_{\Omega \times (-h,0)} \b[u(t),v(t)] \,\dx\dt +\delta_{1}(h)+\delta_{2}(h),
    \end{align*}   
    holds, where $\delta_{1}(h)$ and $\delta_{2}(h)$ are defined by
    \begin{align*}
    \delta_{1}(h)&:= \tfrac{1}{h} \iint_{\Omega_{T}} \b[v(t),v(t+h)] \,\dx\dt, \\
    \delta_{2}(h)&:=  \iint_{\Omega \times (- h, 0)} \Delta_{h}v \cdot (\power{v}{q}(t+h) - \power{u}{q}(t)) \,\dx\dt. 
\end{align*}
If $\partial_{t}v \in L^{q+1}(\Omega \times (-h_{0}, T+h_{0}),\mathds{R}^{N})$ for some $h_{0} > 0$, then
\begin{align*}
    \lim_{h \downarrow 0} \delta_{1}(h)=0
    \quad\text{and}\quad
    \lim_{h \downarrow 0} \delta_{2}(h)=0.
\end{align*}
\end{lemma}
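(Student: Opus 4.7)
The plan is to combine the convexity of $w \mapsto \tfrac{1}{q+1}|w|^{q+1}$ with a careful discrete summation by parts, tracking the three time intervals $(-h,0)$, $(0,T-h)$, and $(T-h,T)$ that arise from the time shift $s = t-h$. A direct expansion of the definition of $\b$ yields the pointwise identity
\begin{align*}
    \bigl(\power{u(t)}{q} - \power{u(t{-}h)}{q}\bigr) \cdot (v(t) - u(t))
    = \b[u(t{-}h), v(t)] - \b[u(t), v(t)] - \b[u(t{-}h), u(t)],
\end{align*}
and since $\b[u(t{-}h), u(t)] \geq 0$ by convexity, dividing by $h$ and integrating over $\Omega_T$ gives
\begin{align*}
    \iint_{\Omega_T} \Delta_{-h}\power{u}{q} \cdot (v - u)\,\dx\dt
    \leq \tfrac{1}{h}\iint_{\Omega_T}\b[u(t{-}h), v(t)]\,\dx\dt
    - \tfrac{1}{h}\iint_{\Omega_T}\b[u(t), v(t)]\,\dx\dt.
\end{align*}
After substituting $s = t-h$ in the first right-hand integral (so its domain becomes $\Omega \times (-h, T-h)$), I would split both right-hand integrals into pieces over the three intervals $(-h, 0)$, $(0, T-h)$, and $(T-h, T)$.

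On the middle interval the algebraic identity
\begin{align*}
    \b[u(t), v(t+h)] - \b[u(t), v(t)]
    = \b[v(t), v(t+h)] + (\power{v(t)}{q} - \power{u(t)}{q}) \cdot (v(t+h) - v(t))
\end{align*}
shows that $\tfrac{1}{h}$ times its left-hand side equals contributions to $\delta_1(h)$ and to the target integral $\iint_{\Omega_T}\Delta_h v \cdot (\power{v}{q} - \power{u}{q})\,\dx\dt$. The tail pieces on $\Omega \times (T-h, T)$ missing from these full-interval target integrals recombine (via the same identity used in reverse on $(T-h, T)$) to produce exactly $-\tfrac{1}{h}\iint_{\Omega \times (T-h, T)}\b[u(t), v(t+h)]\,\dx\dt$, canceling the leftover $\tfrac{1}{h}\iint_{\Omega \times (T-h, T)}\b[u(t),v(t)]\,\dx\dt$ from the right-hand split. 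On the initial interval $(-h, 0)$ I would apply the companion identity
\begin{align*}
    \b[u(t), v(t+h)] - \b[u(t), v(t)]
    = -\b[v(t+h), v(t)] + (\power{v(t+h)}{q} - \power{u(t)}{q}) \cdot (v(t+h) - v(t))
\end{align*}
instead; dropping the non-negative slack $\tfrac{1}{h}\b[v(t+h), v(t)] \geq 0$ then yields the upper bound $\tfrac{1}{h}\iint_{\Omega \times (-h, 0)}\b[u(t), v(t)]\,\dx\dt + \delta_2(h)$, completing the derivation of the main inequality.

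For the vanishing of $\delta_1(h)$ and $\delta_2(h)$ under $\partial_t v \in L^{q+1}$, I would apply Hölder's inequality with conjugate exponents $q+1$ and $\tfrac{q+1}{q}$ to separate a factor $\Delta_h v$ from a factor involving differences of $\power{v}{q}$ and $\power{u}{q}$. Jensen's inequality yields $\|\Delta_h v\|_{L^{q+1}(\Omega \times I)} \leq \|\partial_t v\|_{L^{q+1}(\Omega \times I_h)}$ on any subinterval $I$ with $h$-neighborhood $I_h$, providing a uniform $L^{q+1}$ bound (needed for $\delta_1$) and the convergence $\|\Delta_h v\|_{L^{q+1}(\Omega \times (-h, 0))} \to 0$ as the domain shrinks (needed for $\delta_2$). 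For $\delta_1$, I would additionally use Lemma~\ref{lem:technical_lemma} to bound $\b[v(t), v(t+h)] \leq c(q)(\power{v(t+h)}{q} - \power{v(t)}{q}) \cdot (v(t+h) - v(t))$, reducing the claim to $\|\power{v(t+h)}{q} - \power{v(t)}{q}\|_{L^{(q+1)/q}(\Omega_T)} \to 0$, which follows from the time continuity of $v$ with values in $L^{q+1}(\Omega)$ (a consequence of $\partial_t v \in L^{q+1}$) combined with Lemma~\ref{lem:strong_L^q+1_convergence}.

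The principal difficulty lies in the bookkeeping across the three time intervals: each boundary contribution from the time shift must be identified as exactly one of the target terms, and the two non-negative slack terms $\b[u(t-h), u(t)] \geq 0$ and $\b[v(t+h), v(t)] \geq 0$ must be correctly isolated as the sources of the inequality direction; once both slack terms have been extracted, the remaining equalities line up term-by-term with the statement.
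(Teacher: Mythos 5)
Your argument is correct: the Bregman-type identity $(\power{u(t)}{q}-\power{u(t-h)}{q})\cdot(v(t)-u(t))=\b[u(t-h),v(t)]-\b[u(t),v(t)]-\b[u(t-h),u(t)]$, the shift $s=t-h$, the two companion identities on the subintervals, and the isolation of the non-negative slacks $\b[u(t-h),u(t)]$ and $\b[v(t+h),v(t)]$ all check out, and the limits of $\delta_1,\delta_2$ follow as you describe from Hölder, Jensen and Lemma~\ref{lem:strong_L^q+1_convergence}. The paper itself gives no proof but defers to \cite[Lemma 2.11]{Schaetzler} (and \cite[Lemma 2.10]{BDMS18-2}), and your derivation is essentially the same convexity/discrete summation-by-parts argument used there.
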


We conclude this section by the following compactness result for signed or vector-valued functions.
It has been proven in \cite[Proposition 3.1]{BDMS18-2} in the case of non-negative functions and in \cite[Lemma 2.12]{Schaetzler} for vector-valued or signed functions.
\begin{lemma}
\label{lem:compactness}
Let $\Omega \subset \R^n$ be a bounded domain, $0<T<\infty$, $p>1$ and $q \in (0,\infty)$.
Assume that for $h_\ell := \frac{T}{\ell}$ for some $\ell \in \N$ maps $u_\ell \colon \Omega \times (-h_\ell, T] \to \R^N$ are defined by
\begin{align*}
	u_\ell(\cdot,t) := u_{\ell,i}
	\quad \text{for $t \in ((i-1) h_\ell, ih_\ell]$ with $i \in \{0, \ldots, \ell\}$,}
\end{align*}
where $u_{\ell,i} \in L^{q+1}(\Omega,\R^N) \cap W^{1,p}(\Omega,\R^N)$.
Further, suppose that $u_\ell \wto u$ weakly in $L^p(0,T;W^{1,p}(\Omega,\R^N))$ as $\ell \to \infty$.
Moreover, assume that there exists a constant $C$ such that the energy estimate
\begin{align*}
	\max_{i \in \{0, \ldots, \ell\}}
	\bigg( \int_\Omega |u_{\ell,i}|^{q+1} \,\dx
	+ \int_\Omega |Du_{\ell,i}|^p \,\dx \bigg)
	\leq C
\end{align*}
and the continuity estimate
\begin{align*}
	\tfrac{1}{h_\ell} \sum_{i=1}^\ell \int_\Omega \Big| \power{u_{\ell,i}}{\frac{q+1}{2}} - \power{u_{\ell,i-1}}{\frac{q+1}{2}} \Big|^2 \,\dx
	\leq C
\end{align*}
are satisfied for all $\ell \in \N$.
Then, there exists a subsequence $\mathfrak{K} \subset \N$ such that there holds
\begin{align}\label{lem:strong_L1_conv}
	\left\{
	\begin{array}{ll}
		\power{u_\ell}{\frac{q+1}{2}} \to \power{u}{\frac{q+1}{2}}
		& \text{strongly in } L^1(\Omega_T,\R^N), \\[5pt]
		u_\ell \to u
		&\text{a.e.~in } \Omega_T,
	\end{array}
	\right.
\end{align}
in the limit $\mathfrak{K}\ni \ell\to\infty$.         
\end{lemma}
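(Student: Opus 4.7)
The strategy is to invoke the Kolmogorov--Riesz--Fr\'echet compactness criterion for the sequence $w_\ell := \power{u_\ell}{(q+1)/2}$ in $L^1(\Omega_T, \mathds{R}^N)$. Once relative compactness is established, a subsequence $w_\ell \to w$ strongly in $L^1$, and a further subsequence converges pointwise a.e.; since $u_\ell = \power{w_\ell}{2/(q+1)}$ and the map $w\mapsto \power{w}{2/(q+1)}$ is continuous, $u_\ell \to \power{w}{2/(q+1)}$ a.e. The assumed weak convergence $u_\ell \wto u$ in $L^p(0,T;W^{1,p}(\Omega,\mathds{R}^N))$, together with the uniform $L^{q+1}$-bound on $u_\ell$ and a Vitali-type identification, then forces $u = \power{w}{2/(q+1)}$, i.e.~$w = \power{u}{(q+1)/2}$, which yields both conclusions of the lemma.

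Equi-integrability of $(w_\ell)$ in $L^1(\Omega_T)$ is immediate from the energy estimate: $\int_\Omega |w_\ell(\cdot,t)|^2\,\dx = \int_\Omega |u_\ell(\cdot,t)|^{q+1}\,\dx \leq C$, so $(w_\ell)$ is bounded in $L^\infty(0,T;L^2(\Omega))$ and in particular equi-integrable in $L^1$.

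For the uniform vanishing of time translates, I would fix $h = k h_\ell$ (a general $h$ is handled by comparison with the nearest multiple of $h_\ell$) and write $w_\ell(\cdot,t+h) - w_\ell(\cdot,t) = \sum_{i=j+1}^{j+k}(w_{\ell,i}-w_{\ell,i-1})$ on the interval $t\in((j-1)h_\ell, jh_\ell]$. Applying Cauchy--Schwarz both in the sum and in the spatial integration, and exploiting the continuity estimate, should yield
\[
\iint_{\Omega\times(0,T-h)} |w_\ell(x,t+h)-w_\ell(x,t)|^2 \,\dx\dt \leq C h^2,
\]
uniformly in $\ell$. Since $\Omega_T$ has finite measure, this also implies the corresponding vanishing in $L^1$.

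The main obstacle is the uniform vanishing of spatial translates. Here I would apply Lemma~\ref{ineq:power_alpha_estimate_q_greater_0} with $\alpha = (q+1)/2$ to obtain
\[
|w_\ell(x+\tau,t) - w_\ell(x,t)| \leq c(|u_\ell(x+\tau,t)|+|u_\ell(x,t)|)^{(q-1)/2}|u_\ell(x+\tau,t) - u_\ell(x,t)|,
\]
and then combine H\"older's inequality (with exponents tailored to the uniform $L^{q+1}$-bound) with the classical difference-quotient estimate $\|u_\ell(\cdot+\tau,\cdot) - u_\ell\|_{L^p(\Omega_\tau\times(0,T))} \leq |\tau|\|Du_\ell\|_{L^p(\Omega_T)}$, possibly together with interpolation between $L^p$ and $L^{q+1}$, to obtain a vanishing estimate on $\iint |w_\ell(\cdot+\tau,\cdot) - w_\ell|\,\dx\dt$ as $|\tau|\to 0$, uniformly in $\ell$. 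For $0 < q < 1$ the factor $(|u_\ell(x+\tau,t)|+|u_\ell(x,t)|)^{(q-1)/2}$ becomes singular, and in this range it is more convenient to rely on the H\"older-continuity estimate $|\power{a}{\alpha}-\power{b}{\alpha}| \leq c|a-b|^\alpha$ valid for $\alpha=(q+1)/2\in(0,1)$, which together with H\"older and the $L^p(W^{1,p})$-bound produces $\iint |w_\ell(\cdot+\tau,\cdot)-w_\ell|\,\dx\dt \leq c|\tau|^{(q+1)/2}$. With both translation estimates in hand, Kolmogorov--Riesz--Fr\'echet yields the desired relative compactness in $L^1(\Omega_T)$.
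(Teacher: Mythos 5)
Your argument is correct and coincides with the strategy behind the proofs the paper relies on: the paper gives no proof of this lemma itself but cites \cite[Proposition 3.1]{BDMS18-2} and \cite[Lemma 2.12]{Schaetzler}, both of which establish relative compactness of $\power{u_\ell}{\frac{q+1}{2}}$ in $L^1(\Omega_T,\R^N)$ via exactly these space- and time-translate estimates (using Lemma~\ref{ineq:power_alpha_estimate_q_greater_0}, the difference-quotient bound from the gradient estimate, and the continuity estimate) followed by the Fr\'echet--Kolmogorov criterion and the identification of the limit through the weak convergence $u_\ell \wto u$. The only points worth making explicit in a write-up are the standard technicalities you gloss over: for $|h|<h_\ell$ the time-translate bound degenerates to $C(|h|+h_\ell)^2$, which still suffices since $h_\ell\to0$ (discard finitely many $\ell$); the spatial translates need only be controlled on subdomains $\Omega'\Subset\Omega$, the contribution near $\partial\Omega$ being absorbed by the uniform $L^\infty(0,T;L^2(\Omega,\R^N))$-bound on $\power{u_\ell}{\frac{q+1}{2}}$; and in the case $p<\tfrac{2(q+1)}{q+3}$ the interpolation between $L^p$ and $L^{q+1}$ that you mention is genuinely needed.
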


\subsection{Mollification in time}
We will use the following regularization with respect to time introduced by Landes \cite{Landes}.
Given $h>0$ and a function $v$, the mollification $[v]_h$ is constructed to satisfy the ordinary differential equation
\begin{equation}
	\partial_t [v]_h
	=
	-\tfrac1h \big( [v]_h - v \big)
	\label{eq:ODE_mollification}
\end{equation}
with initial condition $[v]_h(0) = v_o$.
More precisely, let $X$ be a separable Banach space, $v_o \in X$ and $v \in L^r(0,T;X)$ for some $1 \leq r \leq \infty$.
Later on, we will mainly consider $X=L^q(\Omega,\R^N)$ and $X=W^{1,p}_{u_{\ast}}(\Omega,\R^N)$.
For $h \in (0,T]$, we define the mollification
\begin{equation}
	[v]_h(t) :=
	e^{-\frac{t}{h}} v_o + \tfrac1h \int_0^t e^\frac{s-t}{h} v(s) \,\ds
	\label{eq:time_mollification}
\end{equation}
for any $t \in [0,T]$.
The basic properties of this mollification procedure are collected in the following lemma.
For the proofs of the statements, we refer to \cite[Appendix~B]{BDM13} and \cite{Naumann}.
\begin{lemma}\label{lem:mollification:estimate_and_continuous_convergence}
Let $X$ be a separable Banach space and $v_o \in X$.
If $v \in L^r(0,T;X)$ for some $1 \leq r \leq \infty$, then the mollification $[v]_h$ defined according to \eqref{eq:time_mollification} satisfies $[v]_h \in L^r(0,T;X)$ and for any $t_o \in (0,T]$ there holds the bound
\begin{align*}
	\big\| [v]_h \big\|_{L^r(0,t_o;X)}
	\leq
	\| v \|_{L^r(0,t_o;X)}
	+ \Big[ \tfrac{h}{r} \Big( 1 - e^{-\frac{t_o r}{h}} \Big) \Big]^\frac{1}{r} \| v_o\|_X,
\end{align*}
where the bracket $[\ldots]^\frac{1}{r}$ is interpreted as $1$ in the case $r=\infty$.
Furthermore, in the case $r<\infty$ we have that $[v]_h \to v$ in $L^r(0,T;X)$ in the limit $h \downarrow 0$.
If $v \in C^0([0,T];X)$ and $v_o = v(0)$, then $[v]_h \in C^0([0,T];X)$ with $[v]_h(0) = v_o$ and moreover, we have that $[v]_h \to v$ in $C^0([0,T];X)$ as $h \downarrow 0$.
\end{lemma}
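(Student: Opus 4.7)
The plan is to decompose
\begin{equation*}
[v]_h(t) = e^{-t/h} v_o + \tfrac{1}{h}\int_0^t e^{(s-t)/h} v(s)\,\ds \;=:\; T_h(t) + S_h(t),
\end{equation*}
and to handle the transient piece $T_h$ and the running average $S_h$ separately across all three assertions of the lemma.

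For the norm bound, $T_h$ is elementary: $\|T_h(t)\|_X = e^{-t/h}\|v_o\|_X$ combined with $\int_0^{t_o} e^{-tr/h}\,\dt = \frac{h}{r}(1-e^{-t_o r/h})$ yields exactly the bracketed factor in the statement (interpreted as $1$ when $r=\infty$). For $S_h$ I would extend $v$ by zero outside $[0,t_o]$ and view the integral as a one-sided convolution with the kernel $K_h(\sigma) := \frac{1}{h}e^{-\sigma/h}\mathbf{1}_{[0,\infty)}(\sigma)$. Since $\|K_h\|_{L^1(\R)}=1$, Young's convolution inequality in the Bochner setting (equivalently Minkowski's integral inequality) gives $\|S_h\|_{L^r(0,t_o;X)} \leq \|v\|_{L^r(0,t_o;X)}$, and the triangle inequality closes the estimate.

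For the $L^r$ convergence in the case $r<\infty$, the same computation shows $\|T_h\|_{L^r(0,T;X)} \to 0$ as $h\downarrow 0$, so it suffices to prove $S_h \to v$ in $L^r(0,T;X)$. Using the uniform operator bound just established, I reduce by density to the case of continuous $v$, for which the identity $\int_0^t K_h(t-s)\,\ds = 1-e^{-t/h}$ allows the rewrite
\begin{equation*}
S_h(t) - v(t) = -e^{-t/h} v(t) + \int_0^t K_h(t-s)\bigl(v(s)-v(t)\bigr)\,\ds.
\end{equation*}
I would split the remaining integral at $s = t - \eta$ and bound the near part by the modulus of continuity of $v$ and the far part by $K_h(t-s) \leq \frac{1}{h} e^{-\eta/h}$, which vanishes as $h \downarrow 0$. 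This yields uniform, hence $L^r$, convergence for continuous functions.

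The $C^0$ assertion then follows by putting $T_h$ back: under the compatibility $v_o = v(0)$, the identity $[v]_h(t) - v(t) = e^{-t/h}(v_o - v(t)) + \int_0^t K_h(t-s)(v(s)-v(t))\,\ds$ combined with uniform continuity of $v$ makes both terms uniformly small on $[0,T]$, while continuity of $[v]_h$ on $[0,T]$ and $[v]_h(0) = v_o$ are immediate from the defining formula. The main technical delicacy is that the one-sided convolution is not an approximate identity in the strict sense---$\int_0^t K_h(t-s)\,\ds$ equals $1-e^{-t/h}$ rather than $1$---so the boundary contribution $e^{-t/h}(v_o-v(t))$ must be tracked throughout, and the hypothesis $v_o = v(0)$ is precisely what is needed to keep this term uniformly small near $t=0$ for the $C^0$ case, whereas for $L^r$ convergence with $r<\infty$ it can simply be absorbed into the transient.
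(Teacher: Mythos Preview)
The paper does not supply its own proof of this lemma; it simply refers to \cite[Appendix~B]{BDM13} and \cite{Naumann}. Your argument is the standard one found in those references and is correct in substance.

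One wording issue worth tightening: in the $L^r$-convergence paragraph you assert ``uniform, hence $L^r$, convergence'' of $S_h$ to $v$ for continuous $v$, but the term $-e^{-t/h}v(t)$ in your decomposition of $S_h(t)-v(t)$ does \emph{not} vanish uniformly in $t$ (at $t=0$ it equals $-v(0)$). Your final paragraph shows you are aware of this---the piece $e^{-t/h}v(t)$ goes to zero in $L^r(0,T;X)$ by exactly the same computation as the transient $T_h$---so simply state that explicitly in the $L^r$ step rather than claiming uniform convergence of $S_h-v$. Alternatively, run the density argument with continuous functions vanishing at $t=0$ (these are still dense in $L^r(0,T;X)$ for $r<\infty$), in which case $S_h\to v$ really is uniform and your sentence stands as written.
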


Next, we recall the results from \cite[Lemma 2.3]{BDM14-6}, which yields the convergence $f(x,[v]_h, D[v]_h) \to f(x,v,Dv)$ as $h \downarrow 0$, provided that \eqref{eq:integrand} holds.
Note that the property \eqref{ineq:mollification_convexity_estimate} is evident from the proof of \cite[Lemma 2.3]{BDM14-6}.
\begin{lemma}\label{lem:mollification_convexity}
    Let $f \colon \Omega \times \R^N \times \R^{Nn} \to [0,\infty)$ satisfy ~\eqref{eq:integrand} and $T >0$. Assume that $v \in L^{1}(0,T;W^{1,1}(\Omega, \mathds{R}^{N}))$, with $f(\cdot, v, Dv) \in L^{1}(\Omega_{T})$, and $v_o \in W^{1,1}(\Omega, \mathds{R}^{N})$, with $f(\cdot, v_o, Dv_o) \in L^{1}(\Omega_{T})$. Then, it follows that $f(\cdot, [v]_h, D[v]_h) \in L^{1}(\Omega_{T})$ and 
    \begin{align}\label{ineq:mollification_convexity_estimate}
        f(x, [v]_h, D[v]_h) \le [f(x, v, Dv)]_h.
    \end{align}
    Furthermore, we have that
    \begin{align*}
        \lim_{h\downarrow 0} \int_0^{T} \int_{\Omega} f(x, [v]_h, D[v]_h) \,\dx\dt = \int_0^{T} \int_{\Omega} f(x,v,Dv) \,\dx\dt.
    \end{align*}
\end{lemma}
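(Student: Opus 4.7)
The plan is to obtain the pointwise estimate \eqref{ineq:mollification_convexity_estimate} via Jensen's inequality, and then to upgrade it to convergence of integrals by pairing the resulting upper bound with a Fatou-type lower bound.

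First I would note that the weights appearing in the definition \eqref{eq:time_mollification} form a probability measure on $\{0\}\cup(0,t]$: they consist of the point mass $e^{-t/h}$ at $s=0$ and the density $\tfrac{1}{h}e^{(s-t)/h}$ on $(0,t]$, and a direct computation gives $e^{-t/h}+\tfrac{1}{h}\int_0^t e^{(s-t)/h}\,\ds=1$. Since \eqref{eq:integrand}$_1$--$_2$ ensure that $f(x,\cdot,\cdot)$ is a finite-valued convex function on $\R^N\times\R^{Nn}$ for a.e.~$x\in\Omega$ (hence continuous in $(u,\xi)$), Jensen's inequality applied to the map $s\mapsto (v(\cdot,s),Dv(\cdot,s))$, together with the datum $(v_o,Dv_o)$ at $s=0$, would yield the pointwise bound \eqref{ineq:mollification_convexity_estimate} for a.e.~$(x,t)\in\Omega_T$. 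Integrating this estimate and invoking the $L^1$ bound from Lemma \ref{lem:mollification:estimate_and_continuous_convergence} with $r=1$ would then give $f(\cdot,[v]_h,D[v]_h)\in L^1(\Omega_T)$.

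For the convergence of integrals, I would combine two bounds. Applying the $L^1$-convergence part of Lemma \ref{lem:mollification:estimate_and_continuous_convergence} to the scalar function $f(\cdot,v,Dv)\in L^1(\Omega_T)$ gives $[f(\cdot,v,Dv)]_h\to f(\cdot,v,Dv)$ in $L^1(\Omega_T)$, which combined with \eqref{ineq:mollification_convexity_estimate} yields
\begin{align*}
\limsup_{h\downarrow 0}\iint_{\Omega_T} f(x,[v]_h,D[v]_h)\,\dx\dt \le \iint_{\Omega_T} f(x,v,Dv)\,\dx\dt.
\end{align*}
For the matching liminf bound, I would apply Lemma \ref{lem:mollification:estimate_and_continuous_convergence} with $X=W^{1,1}(\Omega,\R^N)$ to obtain $[v]_h\to v$ in $L^1(0,T;W^{1,1}(\Omega,\R^N))$, extract a subsequence $h_k\downarrow 0$ along which $([v]_{h_k},D[v]_{h_k})\to(v,Dv)$ a.e.~in $\Omega_T$, invoke the continuity of $f(x,\cdot,\cdot)$ to conclude $f(x,[v]_{h_k},D[v]_{h_k})\to f(x,v,Dv)$ a.e., and apply Fatou's lemma to obtain $\iint_{\Omega_T} f(x,v,Dv)\,\dx\dt\le\liminf_{k\to\infty}\iint_{\Omega_T} f(x,[v]_{h_k},D[v]_{h_k})\,\dx\dt$. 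A standard subsequence-of-subsequences argument would then promote the convergence to the full limit $h\downarrow 0$.

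The main subtlety I anticipate is verifying that Jensen's inequality is applicable in the vector-valued form required here, which amounts to treating the atom at $s=0$ carefully alongside the absolutely continuous weights on $(0,t]$; once the weights are recognised as a probability measure on $\{0\}\cup(0,t]$ and $f(x,\cdot,\cdot)$ is known to be continuous on $\R^N\times\R^{Nn}$, this reduces to the standard scalar Jensen inequality applied to $(u,\xi)\mapsto f(x,u,\xi)$. The remainder of the argument is straightforward bookkeeping.
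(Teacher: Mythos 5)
Your proposal is correct and follows essentially the same route as the proof the paper relies on (it cites \cite[Lemma 2.3]{BDM14-6}): Jensen's inequality with respect to the exponential probability weights (atom at $s=0$ plus density $\tfrac1h e^{(s-t)/h}$ on $(0,t]$), together with $D[v]_h=[Dv]_h$, gives \eqref{ineq:mollification_convexity_estimate}, and the convergence of the integrals follows by pairing the resulting $\limsup$ bound (via Lemma~\ref{lem:mollification:estimate_and_continuous_convergence} applied to $f(\cdot,v,Dv)\in L^1$) with the Fatou $\liminf$ bound obtained from a.e.\ convergence along subsequences.
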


\subsection{The initial condition}
Here, we show that any variational solution defined as in Definition~\ref{Definition:variational_solution} admits the initial condition in the $L^{q+1}$-sense.
We emphasize that this also holds for general domains $E$, i.e., domains that may shrink in time.
In the proof of the following result, for $t\in[0,T)$ and $\sigma >0$ we denote by
\begin{align}\label{inner_parallel_set}
    E^{t,\sigma} := \big\{ x \in E^{t} : \dist (x, \partial E^{t} ) > \sigma \big\}
\end{align}
the inner parallel set of $E^{t}$.
\begin{lemma}\label{lem:initial_condition_convergence}
Assume that $E$ satisfies \eqref{eq:boundary_E_zero_measure}, that the variational integrand $f\colon \Omega \times \mathds{R}^{N} \times \mathds{R}^{Nn} \to [0,\infty)$ satisfies \eqref{eq:integrand}, and that the initial and the lateral boundary values $u_o$ and $u_{\ast}$ fulfill \eqref{compatibility_condition_for_u_o_and_u_ast}.
Then, any variational solution to \eqref{eq:system} in the sense of Definition~\ref{Definition:variational_solution} admits the initial values $u(0) = u_o$ in the $L_{\mathrm{loc}}^{q+1}$-sense, i.e.,
    \begin{align*}
        \lim_{h \to 0} \tfrac{1}{h} \int_o^{h} \Vert u(t)-u_o \Vert_{L^{q+1}(K, \mathds{R}^{N})}^{q+1} \,\dt =0
    \end{align*}
    holds for any compact set $K \subset E^{0}$.
\end{lemma}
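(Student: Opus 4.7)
The plan is to test the variational inequality with a family of admissible comparison maps whose initial values approximate $u_o$ in $L^{q+1}(E^0,\R^N)$. Since $u_o$ itself is merely in $L^{q+1}$ and need not satisfy the lateral boundary condition, I would approximate $u_o - u_\ast \in L^{q+1}(E^0,\R^N)$ by a sequence $\phi_n \in C_c^\infty(E^0,\R^N)$. Setting $K_n := \overline{\spt\phi_n}$, which is compact in $E^0$, the relative openness of $E$ in $\Omega_T$ produces $\epsilon_n>0$ with $K_n \subset E^t$ for all $t \in [0,\epsilon_n)$; likewise the fixed compact $K$ satisfies $K \subset E^t$ for $t \in [0,\epsilon_0)$ for some $\epsilon_0>0$. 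Choosing cutoffs $\zeta_n \in C^1([0,T))$ with $\zeta_n \equiv 1$ on $[0,\epsilon_n/4]$ and $\zeta_n \equiv 0$ on $[\epsilon_n/2,T)$, I define
\[
    v_n(x,t) := u_\ast(x) + \zeta_n(t)\phi_n(x).
\]
Then $v_n \in V^p_q(E)$ with $\partial_t v_n = \zeta_n'\phi_n \in L^{q+1}(\Omega_T,\R^N)$ and $v_n(0)=u_\ast+\phi_n$.

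For $\tau \in (0,\epsilon_n/4)$ we have $v_n \equiv u_\ast + \phi_n$ and $\partial_t v_n \equiv 0$ on $[0,\tau]$. Inserting $v_n$ into~\eqref{eq:variational_inequality} and discarding the nonnegative term $\iint_{E\cap\Omega_\tau} f(x,u,Du)\,\dx\dt$ on the left yields
\[
    \int_{E^\tau}\b[u(\tau),u_\ast+\phi_n]\,\dx \leq \tau M_n + \int_{E^0}\b[u_o,u_\ast+\phi_n]\,\dx,
\]
with $M_n := \int_\Omega f(x,u_\ast+\phi_n,D(u_\ast+\phi_n))\,\dx < \infty$ by~\eqref{compatibility_condition_for_u_o_and_u_ast}$_3$. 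Averaging over $\tau \in (0,h)$ with $h<\min\{\epsilon_0,\epsilon_n\}/4$, using $K \subset E^\tau$ and $\b \geq 0$ to restrict the left-hand side to $K$, and letting $h\downarrow 0$ produces
\[
    \limsup_{h\downarrow 0}\frac{1}{h}\int_0^h\int_{K}\b[u(\tau),u_\ast+\phi_n]\,\dx\,\d\tau \leq \int_{E^0}\b[u_o,u_\ast+\phi_n]\,\dx.
\]

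To convert this $\b$-estimate into the desired $L^{q+1}$-statement, I combine Lemma~\ref{lem:technical_lemma_2} with Lemma~\ref{lem:technical_lemma}, apply Cauchy--Schwarz on $K$, and use Jensen's inequality for the concave square root; the prefactor is bounded uniformly in $n$ via $u \in L^\infty(0,T;L^{q+1}(\Omega,\R^N))$ and $u_\ast+\phi_n\to u_o$ in $L^{q+1}(E^0)$, which gives
\[
    \limsup_{h\downarrow 0}\tfrac{1}{h}\int_0^h\int_{K}|u(\tau)-u_\ast-\phi_n|^{q+1}\,\dx\,\d\tau \leq C\Big(\int_{E^0}\b[u_o,u_\ast+\phi_n]\,\dx\Big)^{1/2}.
\]
The elementary triangle inequality $|u(\tau)-u_o|^{q+1}\le c(q)\bigl(|u(\tau)-u_\ast-\phi_n|^{q+1}+|u_\ast+\phi_n-u_o|^{q+1}\bigr)$ then dominates the left-hand side of the claim by $C\bigl(\int_{E^0}\b[u_o,u_\ast+\phi_n]\,\dx\bigr)^{1/2}+c\int_{E^0}|u_\ast+\phi_n-u_o|^{q+1}\,\dx$. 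Both quantities vanish as $n\to\infty$: the second by construction, and the first because direct expansion of $\b[u_o,\,\cdot\,]$ shows it is continuous at $u_o$ in the $L^{q+1}$-topology. Since the left-hand side is independent of $n$, it must equal zero. The principal difficulty is the construction of an admissible test function: $u_o$ itself is not a valid comparison map (it need not lie in $W^{1,p}$, need not equal $u_\ast$ outside $E^0$, and on shrinking domains its support cannot be required to sit inside $E^t$ for large~$t$), so one is forced both to approximate $u_o - u_\ast$ by $C_c^\infty(E^0)$-functions and to truncate in time via $\zeta_n$ to preserve membership in $V^p_q(E)$ on the entire interval $[0,T)$.
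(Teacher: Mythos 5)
Your proof is correct and follows essentially the same route as the paper's: both test the variational inequality with a smooth spatial approximation of $u_o$ compatible with $u_\ast$ that is constant in time near $t=0$, average over small times $\tau\in(0,h)$ to obtain a $\b$-estimate on $K\subset E^\tau$, and convert to the $L^{q+1}$-statement via Lemma~\ref{lem:technical_lemma_2}, Lemma~\ref{lem:technical_lemma} and H\"older's inequality. The only minor differences are that the paper uses an explicit mollification $u_o^{(\eps)}$ and sends $\eps\downarrow 0$ in the $\b[u(\tau),u_o]$-estimate before passing to the $L^{q+1}$-norm, whereas you keep the approximation $u_\ast+\phi_n$ throughout and finish with a triangle inequality, and your time cutoff $\zeta_n$ makes the comparison map admissible on all of $[0,T)$, handling a point the paper treats only by restricting to $\Omega_\tau$.
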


\begin{proof}
    Let $\epsilon >0$ and suppose that $E^{0, \epsilon}$ is the inner parallel set of $E^{0}$ defined by~\eqref{inner_parallel_set}.
    Since $E$ is relatively open and $\overline{E^{0,\epsilon}}$ is compact, there exists $t_{\epsilon} > 0$ such that
    \begin{align}\label{inner_parallel_set_E_0,epsilon}
        E^{0,\epsilon} \times [0, t_{\epsilon}) \subset E. 
    \end{align}
    Consider a standard mollifier $\phi \in C_{0}^{\infty}(B_{1}(0), \mathds{R}_{\ge 0})$.
    Setting $\phi_{\epsilon}(x) := \epsilon^{-n} \phi ( \tfrac{x}{\epsilon})$, such that $\phi_{\epsilon} \in C_{0}^{\infty}(B_{\epsilon}(0), \mathds{R}_{\ge 0})$, we define the mollification of the initial datum $u_o$ by
    \begin{align}\label{initial_value:u_0_epsilon}
        u_o^{(\epsilon)} := u_{\ast} + ((u_o - u_{\ast})\chi_{E^{0, 2 \epsilon}}) \ast \phi_{\epsilon}.
    \end{align}
    Note that $u_o^{(\epsilon)} - u_{\ast} \in
    C^{\infty}(E^{0,\epsilon}, \mathds{R}^{N})$ with
    $\spt \big(u_o^{(\epsilon)} - u_{\ast} \big) \subset
    \overline{E^{0,\epsilon}}$ and $u_o^{(\epsilon)} \to u_o$ in
    $L^{q+1}(E^0, \mathds{R}^{N})$ as $\epsilon \downarrow 0$.
    Further, by \eqref{compatibility_condition_for_u_o_and_u_ast} it follows that
    \begin{align*}
        0 \le \int_{\Omega} f \big(x, u_o^{(\epsilon)}, Du_o^{(\epsilon)} \big) \,\dx < \infty 
    \end{align*}
    holds for any $\epsilon > 0$. This enables us to consider the time independent extension of the initial value $u_o^{(\epsilon)}$ to $\Omega_{\tau}$, where $\tau \in (0,t_\eps)$, as a comparison map in the variational inequality ~\eqref{eq:variational_inequality}. Therefore,  
    we deduce that
    \begin{align}\label{ineq:first_inequality_developed_in_initial_condition_proof}
        \int_{E^\tau}	\b \big[u(\tau),u_o^{(\epsilon)} \big] \,\dx 
	&\leq
	\iint_{E\cap \Omega_\tau} f \big( x,u_o^{(\epsilon)},Du_o^{(\epsilon)} \big) \,\dx\dt + \int_{E^0} \b \big[u_o,u_o^{(\epsilon)} \big] \,\dx  \nonumber\\
    &= \tau \int_{\Omega} f \big( x,u_o^{(\epsilon)},Du_o^{(\epsilon)} \big) \,\dx + \int_{E^0} \b \big[ u_o,u_o^{(\epsilon)} \big] \,\dx
    \end{align}
    for a.e.~$\tau \in (0,t_\eps)$.
    Further, by the definition of $\b[\cdot, \cdot]$ we obtain the inequality
    \begin{align}\label{identity:inequality_for_b[]}
        \b[u(\tau),&u_{o}]
        \leq
        \b \big[u(\tau),u_{o}^{(\epsilon)} \big]
        + \tfrac{1}{q+1} \Big( |u_{o}|^{q+1} - \big|u_{o}^{(\epsilon)} \big|^{q+1} \Big)
        + |u(\tau)|^q \big| u_{o}^{(\epsilon)}  - u_o \big|.
    \end{align}
    For every compact set $K\subset E^0$, we can choose $\eps>0$ so small
    that $K\subset E^{0,\eps}$. In view
    of~\eqref{inner_parallel_set_E_0,epsilon}, this implies $K\subset
    E^\tau$ for every $\tau\in(0,t_\eps)$. 
    Therefore, ~\eqref{identity:inequality_for_b[]} combined with \eqref{ineq:first_inequality_developed_in_initial_condition_proof} and Hölder's inequality gives us that
    \begin{align*}
        \int_{K} &\b[u(\tau),u_o] \,\dx
        \\ &\leq
        \int_{K} \b \big[u(\tau),u_{o}^{(\epsilon)} \big] \,\dx 
        + \tfrac{1}{q+1} \int_{K} \Big( |u_{o}|^{q+1} - \big|u_{o}^{(\epsilon)} \big|^{q+1} \Big) \,\dx \\
        & \phantom{=}
        + \bigg( \int_{K} |u(\tau)|^{q+1} \,\dx \bigg)^{\frac{q}{q+1}} 
        \bigg( \int_{K} \vert u_{o}^{(\epsilon)} - u_{o} \vert^{q+1} \,\dx \bigg)^{\frac{1}{q+1}}
         \\&\le
        \tau  \int_{\Omega} f \big(x,u_{o}^{(\epsilon)},Du_{o}^{(\epsilon)} \big) \,\dx
        + \int_{E^0} \b \big[u_{o},u_{o}^{(\epsilon)} \big] \,\dx \\
        &\phantom{=}
        + \tfrac{1}{q+1} \int_{K} \Big( |u_{o}|^{q+1} - \big|u_{o}^{(\epsilon)} \big|^{q+1} \Big) \,\dx \\
        &\phantom{=}
        +\Vert u \Vert_{L^{\infty}(0,T;L^{q+1}(\Omega, \mathds{R}^{N}))}^q
        \bigg( \int_{K} \vert u_{o}^{(\epsilon)} - u_{o} \vert^{q+1} \,\dx \bigg)^{\frac{1}{q+1}}.
    \end{align*} 
    Now, integrating the preceding inequality over $\tau \in [0,h]$ for some $0< h < t_{\epsilon}$ and dividing both sides by $h$ yields
    \begin{align*}
        \tfrac{1}{h} \int_0^{h}& \int_{K}	\b[u(\tau) , u_o] \,\dx \d\tau \\
        &\le
        \tfrac{h}{2} \int_{\Omega} f\big(x,u_o^{(\epsilon)},Du_o^{(\epsilon)}\big) \,\dx
	  +\int_{E^{0}} \b\big[u_o , u_o^{(\epsilon)}\big] \,\dx \\
        &\phantom{=}
        + \tfrac{1}{q+1} \int_{K} \big( |u_{o}|^{q+1} - \big|u_{o}^{(\epsilon)} \big|^{q+1} \big) \,\dx \\
        &\phantom{=}
        +\Vert u \Vert_{L^{\infty}(0,T;L^{q+1}(\Omega, \mathds{R}^{N}))}^q
        \bigg( \int_{K} \vert u_{o}^{(\epsilon)} - u_{o} \vert^{q+1} \,\dx \bigg)^{\frac{1}{q+1}}.
    \end{align*}
    By passing to the limit $h \downarrow 0$, we obtain that
    \begin{align*}
         \limsup_{h \downarrow 0} \tfrac{1}{h} &\int_0^{h} \int_{K}	\b[u(\tau) , u_o] \,\dx \d\tau \\
        &\le
	  \int_{E^{0}} \b\big[u_o , u_o^{(\epsilon)}\big] \,\dx 
        + \tfrac{1}{q+1} \int_{K} \Big( |u_o|^{q+1} - \big|u_o^{(\epsilon)} \big|^{q+1} \Big) \,\dx \\
        &\phantom{=}
        +\Vert u \Vert_{L^{\infty}(0,T;L^{q+1}(\Omega, \mathds{R}^{N}))}^q
        \bigg( \int_{K} \vert u_{o}^{(\epsilon)} - u_{o} \vert^{q+1} \,\dx \bigg)^{\frac{1}{q+1}}.
    \end{align*}
    Since $u_o^{(\epsilon)} \to u_o$ in $L^{q+1}(E^0,\mathds{R}^{N})$ as $\epsilon \downarrow 0$, we derive that 
    \begin{align}\label{limit_in_initial_condition_proof}
           \limsup_{h \downarrow 0} \tfrac{1}{h} \int_0^{h} \int_{K}	\b[u(\tau) , u_o] \,\dx \d\tau =0.
     \end{align}
    By Lemma~\ref{lem:technical_lemma_2} and Lemma~\ref{lem:technical_lemma} we obtain the estimate
    \begin{align*}
        \vert u(\tau) - u_o \vert^{q+1}
        &\le
        c(q) \Big(\vert u(\tau) \vert^{\frac{q+1}{2}} + \vert u_o \vert^{\frac{q+1}{2}} \Big) \Big\vert u(\tau)^{\frac{q+1}{2}} - {u_o}^{\frac{q+1}{2}} \Big\vert \nonumber \\
        &\le
        c(q) \Big(\vert u(\tau) \vert^{\frac{q+1}{2}} + \vert u_o \vert^{\frac{q+1}{2}} \Big) \b[u(\tau), u_o]^{\frac{1}{2}}.
    \end{align*}
    Hence, by integrating over the space-time cylinder and applying Hölder's inequality, we find that
        \begin{align*}
            \tfrac{1}{h} &\int_0^{h} \Vert u(\tau)-u_o \Vert_{L^{q+1}(K, \mathds{R}^{N})}^{q+1} \,\d\tau
            \\ &\le
            \tfrac{c(q)}{h} \bigg( \int_0^{h} \int_{K} \vert u(\tau) \vert^{q+1} + \vert u_o \vert^{q+1} \,\dx\d\tau\bigg)^{\frac{1}{2}}
            \bigg( \int_0^{h} \int_{K} \b[u(\tau), u_o] \,\dx\d\tau \bigg)^{\frac{1}{2}}.
        \end{align*}
    The claim follows by taking the limit $h \downarrow 0$ and using ~\eqref{limit_in_initial_condition_proof}.
\end{proof}

\begin{remark}
If $u_o$ and $u_{\ast}$ coincide in $\Omega \setminus E^0$, then we obtain the convergence in the $L^{q+1}(\Omega, \mathds{R}^{N})$-sense, i.e.,
\begin{align*}
    \lim_{h \downarrow 0} \tfrac{1}{h} \int_0^{h} \Vert u(t)-u_o \Vert_{L^{q+1}(\Omega, \mathds{R}^{N})}^{q+1} \,\dt =0.
\end{align*}
\end{remark}

\subsection{Continuity in time}
\label{sec:continuity in time}
In this section we establish continuity in time for any variational solution to \eqref{eq:system} in the sense of Definition \ref{Definition:variational_solution} in nondecreasing domains.
\begin{proposition}
\label{prop:continuity-nondecreasing}
    Suppose that $E \subset \Omega_{T}$ is a relatively open noncylindrical domain satisfying \eqref{eq:boundary_E_zero_measure} and \eqref{nondecreasing_condition}, and let $u$ be a variational solution to \eqref{eq:system} in the sense of Definition~\ref{Definition:variational_solution}. Then, we have that $u \in C^{0}([0,T]; L^{q+1}(\Omega, \mathds{R}^{N}))$.
\end{proposition}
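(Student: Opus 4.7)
The plan is to test~\eqref{eq:variational_inequality} with the Landes time-mollification $v=[u]_h$ of $u$ itself (defined by~\eqref{eq:time_mollification}) and to deduce from the resulting estimate that $[u]_h\to u$ uniformly in $\tau\in[0,T)$ with respect to the $L^{q+1}(\Omega,\R^N)$-norm. Since $[u]_h\in C^{0}([0,T];L^{q+1}(\Omega,\R^N))$ by Lemma~\ref{lem:mollification:estimate_and_continuous_convergence}, the uniform limit $u$ will automatically inherit the asserted continuity. The admissibility of $[u]_h$ as a comparison map is where the monotonicity~\eqref{nondecreasing_condition} enters crucially: the value $[u]_h(t)$ is a weighted mean, with positive weights, of the initial datum and of $\{u(s)\}_{s\leq t}$, and \eqref{nondecreasing_condition} gives $\Omega\setminus E^t\subset\Omega\setminus E^s$ for $s\leq t$, on which $u(s)\equiv u_\ast$ by definition of $V_s$; the average therefore satisfies $[u]_h(t)\equiv u_\ast$ on $\Omega\setminus E^t$. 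Hence $[u]_h\in V^p_q(E)$, with $\partial_t[u]_h\in L^{q+1}(\Omega_T,\R^N)$ given by the ODE~\eqref{eq:ODE_mollification}.

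Writing $v=[u]_h$ and inserting it into~\eqref{eq:variational_inequality}, the ODE~\eqref{eq:ODE_mollification} combined with Lemma~\ref{lem:technical_lemma} yields the pointwise bound
\begin{align*}
\partial_t v\cdot(\power{v}{q}-\power{u}{q})
=-\tfrac{1}{h}(v-u)\cdot(\power{v}{q}-\power{u}{q})
\leq -\tfrac{1}{c(q)\,h}\,\b[u,v]\leq 0,
\end{align*}
while Lemma~\ref{lem:mollification_convexity} controls the integrand term via $f(x,v,Dv)\leq[f(x,u,Du)]_h$. Discarding the nonpositive parabolic contribution, the variational inequality gives, for a.e.\ $\tau\in(0,T)$,
\begin{align*}
\int_{E^\tau}\b\bigl[u(\tau),[u]_h(\tau)\bigr]\,\dx
\leq
\iint_{\Omega_T}\bigl|[f(\cdot,u,Du)]_h-f(\cdot,u,Du)\bigr|\,\dx\dt
+\int_{E^0}\b\bigl[u_o,[u]_h(0)\bigr]\,\dx.
\end{align*}
The first term on the right vanishes uniformly in $\tau$ as $h\downarrow 0$ by Lemma~\ref{lem:mollification:estimate_and_continuous_convergence}, and the initial contribution is controlled by choosing the initial value of the mollification to be the spatial regularization $u_o^{(\epsilon)}$ of $u_o$ from~\eqref{initial_value:u_0_epsilon} and performing a diagonal passage $\epsilon=\epsilon(h)\downarrow 0$.

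Since $u(\tau)=[u]_h(\tau)=u_\ast$ on $\Omega\setminus E^\tau$, so that $\b[u(\tau),[u]_h(\tau)]\equiv 0$ there, the integral over $E^\tau$ can be replaced by one over $\Omega$. Combining Lemmas~\ref{lem:technical_lemma_2} and~\ref{lem:technical_lemma} with H\"older's inequality and the $L^\infty(0,T;L^{q+1})$-bound on $u$ and $[u]_h$ then upgrades the preceding display to
\begin{align*}
\sup_{\tau\in[0,T)}\|u(\tau)-[u]_h(\tau)\|_{L^{q+1}(\Omega,\R^N)}^{q+1}
\leq c\,\Big(\sup_{\tau}\int_\Omega\b[u(\tau),[u]_h(\tau)]\,\dx\Big)^{1/2}\xrightarrow[h\downarrow 0]{}0,
\end{align*}
which is the desired uniform convergence. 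The main obstacle lies in the admissibility of $[u]_h$: Lemma~\ref{lem:mollification_convexity} requires an initial value $v_o$ with $f(\cdot,v_o,Dv_o)\in L^1(\Omega)$, whereas the hypotheses of Proposition~\ref{prop:continuity-nondecreasing} only ensure $u_o\in L^{q+1}(\Omega,\R^N)$. This is overcome by running the argument with the spatial mollification $u_o^{(\epsilon)}$ of $u_o$ (for which~\eqref{compatibility_condition_for_u_o_and_u_ast}$_3$ supplies the required integrability), controlling the residual initial error $\int_{E^0}\b[u_o,u_o^{(\epsilon)}]\,\dx$ in the spirit of Lemma~\ref{lem:initial_condition_convergence}, and passing $(h,\epsilon(h))\to(0,0)$ along a diagonal.
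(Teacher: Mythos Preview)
Your proposal is correct and follows the same strategy as the paper: use the time mollification $[u]_h$ with regularized initial datum $u_o^{(\epsilon)}$ as comparison map, discard the parabolic term via the ODE~\eqref{eq:ODE_mollification} and Lemma~\ref{lem:technical_lemma}, bound the integrand by Lemma~\ref{lem:mollification_convexity}, and deduce uniform convergence via Lemmas~\ref{lem:technical_lemma_2}--\ref{lem:technical_lemma}. The only minor difference is in handling the residual $\iint_{E\cap\Omega_\tau}\big([f]_h-f\big)\,\dx\dt$: you bound it by $\iint_{\Omega_T}\big|[f]_h-f\big|\,\dx\dt$ and invoke $L^1$-convergence from Lemma~\ref{lem:mollification:estimate_and_continuous_convergence} together with a diagonal choice $\epsilon=\epsilon(h)$, whereas the paper integrates the ODE for $[f]_h$ directly to obtain the explicit bound $\lambda\int_\Omega f\big(x,u_o^{(\epsilon)},Du_o^{(\epsilon)}\big)\,\dx$ and then prescribes $\lambda_i=\epsilon_i\big(1+\int_\Omega f(x,u_o^{(\epsilon_i)},Du_o^{(\epsilon_i)})\,\dx\big)^{-1}$.
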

\begin{proof}
As in the proof of Lemma \ref{lem:initial_condition_convergence}, we consider the standard mollification of the initial value $u_o$ defined by
\begin{align}\label{continuity:u_0_epsilon}
    u_o^{(\epsilon)} := u_{\ast} + \big((u_o - u_{\ast})\chi_{E^{0, 2 \epsilon}} \big) \ast \phi_{\epsilon}.
\end{align}
Since \eqref{nondecreasing_condition} holds, it follows that $\spt \big(u_o^{(\epsilon)} \big) \subset E^{t}$ for all $t \in [0,T)$.
Now, let $[u]_{\lambda, \epsilon}$ with $\lambda>0$ be the time mollification of $u$ according to \eqref{eq:time_mollification} with initial values $u_o^{(\epsilon)}$.
Using $[u]_{\lambda, \epsilon}$ as a comparison map in the variational inequality ~\eqref{eq:variational_inequality}, we obtain that
\begin{align*}
    \int_{E^\tau} & \b \big[u(\tau),[u]_{\lambda, \epsilon}(\tau)\big] \,\dx + \iint_{E\cap \Omega_\tau} f(x,u,Du) \,\dx\dt \nonumber \\
	&\leq
	\iint_{E\cap \Omega_\tau} f \big( x,[u]_{\lambda, \epsilon},D[u]_{\lambda, \epsilon} \big) \,\dx\dt 
	+\iint_{E\cap \Omega_\tau} \partial_t [u]_{\lambda, \epsilon} \cdot \big( \power{[u]_{\lambda, \epsilon}}{q} - \power{u}{q} \big) \,\dx\dt \nonumber  \nonumber \\
	&\phantom{=}
	+\int_{E^0} \b \big[ u_o,u_o^{(\epsilon)} \big] \,\dx \nonumber \\
    &\leq \iint_{E\cap \Omega_\tau} \big[ f(x,u,Du) \big]_{\lambda, \epsilon} \,\dx\dt 
	- \tfrac{1}{h_\ell}\iint_{E\cap \Omega_\tau} \big( [u]_{\lambda, \epsilon}-u \big) \cdot \big( \power{[u]_{\lambda, \epsilon}}{q} - \power{u}{q} \big)  \,\dx\dt \nonumber \\
	&\phantom{=}
	+\int_{E^0} \b \big[ u_o,u_o^{(\epsilon)} \big] \,\dx \nonumber \\
    &\leq \iint_{E\cap \Omega_\tau} \big[ f(x,u,Du) \big]_{\lambda, \epsilon} \,\dx\dt + \int_{E^0} \b \big[ u_o,u_o^{(\epsilon)} \big] \,\dx
\end{align*}
for a.e.~$\tau \in [0,T)$, where $\big[ f(x,u,Du) \big]_{\lambda, \epsilon}$ is defined according to \eqref{eq:time_mollification} with initial values $f \big( x,u_o^{(\epsilon)},Du_o^{(\epsilon)} \big)$.
In the penultimate estimate, we applied Lemma~\ref{lem:mollification_convexity} to the convex function $f$, and in the last line we discarded the second term, which is negative by Lemma~\ref{lem:technical_lemma}. 
Reorganizing the terms, we find that
\begin{align}
    \int_{E^\tau} \b \big[ u(\tau),[u]_{\lambda, \epsilon}(\tau) \big] \,\dx &\leq \iint_{E\cap \Omega_\tau} \big[ \big[ f(x,u,Du) \big]_{\lambda, \epsilon} - f(x,u,Du) \big]\,\dx\dt
    \label{ineq:integral_estimate_for_b(u,u_lambda_epsilon)} \\
    &\phantom{=}
    + \int_{E^0} \b \big[ u_o,u_o^{(\epsilon)} \big] \,\dx.
    \nonumber
\end{align}
Since $E$ is nondecreasing, the integrand of the first term on the right-hand side of \eqref{ineq:integral_estimate_for_b(u,u_lambda_epsilon)} vanishes in $\Omega_{T} \setminus E$.
Thus, by \eqref{eq:ODE_mollification} and since $f$ is non-negative, we have that
\begin{align}
    \iint_{E \cap \Omega_{\tau}} &\big[ \big[ f(x,u,Du) \big]_{\lambda, \epsilon} -f(x,u,Du) \big] \,\dx\dt
    \nonumber \\
    &\qquad= \iint_{\Omega_{\tau}} \big[ \big[ f(x,u,Du) \big]_{\lambda, \epsilon} - f(x,u,Du) \big] \,\dx\dt
    \nonumber \\
    &\qquad= -\lambda \iint_{\Omega_{\tau}} \partial_t \big[ f(x,u,Du) \big]_{\lambda, \epsilon} \,\dx\dt
    \nonumber \\
    &\qquad=\lambda \int_{\Omega} f \big(x, u_o^{(\epsilon)}, Du_o^{(\epsilon)} \big) \,\dx - \lambda \int_{\Omega\times\{\tau\}} f \big( x, [u]_{\lambda, \epsilon}, D[u]_{\lambda, \epsilon} \big) \,\dx
    \nonumber \\
    &\qquad\le \lambda \int_{\Omega} f \big( x, u_o^{(\epsilon)}, Du_o^{(\epsilon)} \big) \,\dx.
    \label{eq:aux-continuity-time}
\end{align}
Let $(\epsilon_{i})_{i \in \mathds{N}}$ be a sequence such that $\epsilon_{i} \downarrow 0$ as $i \to \infty$. We define
\begin{align*}
    \lambda_{i} :=
    \epsilon_{i} \bigg( 1 + \int_{\Omega}  f \big( x,u_o^{(\epsilon_{i})} ,Du_o^{(\epsilon_{i})} \big) \,\dx \bigg)^{-1}
    >0.
\end{align*}
Choosing $\epsilon_i$ and $\lambda_i$ in \eqref{ineq:integral_estimate_for_b(u,u_lambda_epsilon)} and estimating the right-hand side by \eqref{eq:aux-continuity-time}, we conclude that
\begin{align*}
    \int_{E^\tau} \b \big[ u(\tau),[u]_{\lambda_i, \epsilon_i}(\tau) \big] \,\dx
    &\leq
    \epsilon_{i} + \int_{E^0} \b \big[ u_o,u_o^{(\epsilon_i)} \big] \,\dx.
\end{align*}
Now, using Lemma~\ref{lem:technical_lemma_2}, Lemma \ref{lem:technical_lemma}, Hölder's inequality, and Lemma \ref{lem:mollification:estimate_and_continuous_convergence} we derive that
\begin{align*}
    \sup_{\tau \in [0, T)} &\int_{E^\tau} \big\vert [u]_{\lambda_i, \epsilon_i}(\tau) -u(\tau) \big\vert^{q+1} \,\dx
    \\ &\leq
    c(q) \sup_{\tau \in [0, T)} \bigg( \int_{E^{\tau}}	\vert u(\tau) \vert^{q+1} + \big\vert [u]_{\lambda_i, \epsilon_i}(\tau) \big\vert^{q+1} \,\dx\bigg)^{\frac{1}{2}}
    \\ &\phantom{=}
    \cdot \bigg( \int_{E^\tau} \b \big[ u(\tau), [u]_{\lambda_i, \epsilon_i}(\tau) \big] \,\dx \bigg)^{\frac{1}{2}}
    \\ &\leq
    c(q) \Big( \|u\|_{L^\infty(0,T;L^{q+1}(\Omega,\R^N))} + \|u_o\|_{L^{q+1}(\Omega,\R^N)} + \|u_\ast\|_{L^{q+1}(\Omega,\R^N)} \Big)^\frac{q+1}{2}
    \\ &\phantom{=}
    \cdot \bigg( \epsilon_i + \int_{E^0} \b \big[ u_o,u_o^{(\epsilon_i)} \big] \,\dx  \bigg)^\frac{1}{2}.
\end{align*}
Since the right-hand side vanishes as $i \to \infty$, and since we have that $[u]_{\lambda_{i}, \epsilon_i} \in C^{0}([0,T]; L^{q+1}(\Omega, \mathds{R}^{N}))$ for all $i \in \mathds{N}$, this implies the claim of the lemma. 
\end{proof}

\subsection{Density of smooth functions}
In this section, we provide conditions under which $C_{0}^{\infty}(E, \mathds{R}^{N})$ is dense in $V_{q}^{p,0}(E)$.
For the sake of generality, we consider domains $E$ that are allowed to shrink in time.
Moreover, we replace the measure density condition~\eqref{ineq:lower_measure_bound} by the weaker condition that for every $t\in(0,T)$
\begin{equation}
  \label{p-fat}
  \R^n\setminus E^t \mbox{ is uniformly $p$-fat with a parameter $\alpha>0$} 
\end{equation}
in the sense of Definition~\ref{def:p-fat}; see Remark~\ref{rem:V=V} for the relation between \eqref{ineq:lower_measure_bound} and \eqref{p-fat}.
To control the speed at which $E$ may shrink, with the complementary excess
\begin{align*}
    \boldsymbol{e}^{c}(E^{s}, E^{t}):= \sup_{x \in \Omega \setminus E^{t}} \dist(x, \Omega \setminus E^{s}),
\end{align*}
we impose the one-sided growth condition that there exists a modulus of continuity, i.e., a continuous function $\omega \colon [0,\infty)\rightarrow[0,\infty)$
with $\omega(0)=0$, such that
  \begin{align}\label{one sided growth condition}
   \boldsymbol{e}^{c}(E^{s},E^{t})\leq\omega(t-s)\quad\text{for }0\leq s\leq t<T.
  \end{align}
Note that in the special case of nondecreasing domains, \eqref{one sided growth condition} is always satisfied. 
In the following, we generalize the corresponding statements for the case $q=1$ in \cite[Section 5.3.1]{BDSS18}.
First, we define the space
\begin{align*}
    V_\mathrm{cpt}^{p,q}(E):=
    \Big\{ u \in V_{q}^{p,0}(E) : \exists\, \sigma > 0 \text{ with }\spt(u(t)) \subset \overline{E^{t,\sigma}} \text{ for a.e.~$t\in[0,T)$} \Big\},
\end{align*}
and prove an intermediate result.
\begin{lemma}\label{intermediate_density_argument}
    Suppose that \eqref{one sided growth condition} holds with a modulus of continuity $\omega$.
    Then  $C_{0}^{\infty}(E, \mathds{R}^{N})$ is dense in $V_\mathrm{cpt}^{p,q}(E)$ with respect to the norm $\Vert \cdot \Vert_{V_{q}^{p}(E)}$.
    Furthermore, if $u \in V_\mathrm{cpt}^{p,q}(E)$, then there exist $\sigma > 0$ and an approximating sequence $\phi_{k} \in C_{0}^{\infty}(E, \mathds{R}^{N})$ with $\phi_{k} \to u$ in the norm topology of $V_{q}^{p,0}(E)$ such that $\spt(\phi_{k}(t)) \subset \overline{E^{t,\sigma}}$ for all $k \in \mathds{N}$ and $t \in [0,T)$.
\end{lemma}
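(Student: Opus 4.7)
The plan is to produce the approximating sequence by a standard time-shift-and-mollify procedure, in the spirit of \cite[Section~5.3.1]{BDSS18}, but carefully tuned so that the one-sided bound \eqref{one sided growth condition} is only applied in the ``good'' direction. The main geometric ingredient is the preliminary observation that for $0\le s\le t$ with $\omega(t-s)<\sigma$, one has $E^{s,\sigma}\subset E^{t,\sigma-\omega(t-s)}$. This follows directly from \eqref{one sided growth condition}: for $x\in E^{s,\sigma}$ and any $y\in\Omega\setminus E^t$, pick $z\in\Omega\setminus E^s$ with $|y-z|\le\omega(t-s)$ and estimate $|x-y|\ge|x-z|-|y-z|\ge\sigma-\omega(t-s)$. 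This lemma controls the ``loss of inner room'' in the forward time direction and is the only place where \eqref{one sided growth condition} enters.

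Given $u\in V_\mathrm{cpt}^{p,q}(E)$ with associated support parameter $\sigma>0$, I would extend $u$ by zero outside $E$, introduce the backward time-shift $u_\tau(x,t):=u(x,t-\tau)$ (set to $0$ for $t<\tau$), and truncate it against a smooth temporal cutoff $\eta_\tau$ with $\eta_\tau\equiv 1$ on $[\tau,T-\tau]$ and $\spt\eta_\tau\subset[\tau/2,T-\tau/2]$, producing $\tilde u_\tau:=\eta_\tau u_\tau$. The approximating sequence is then $\phi_k:=\tilde u_{\tau_k}*\rho_{\delta_k}$, where $\rho_{\delta_k}$ is a standard space-time mollifier supported in $B_{\delta_k}(0)\times(-\delta_k,\delta_k)$, and the parameters $\tau_k,\delta_k\downarrow 0$ are coupled by $\omega(2\tau_k)\le\sigma/2$, $\delta_k<\sigma/4$, and, crucially, $\delta_k<\tau_k/4$.

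The last condition is the heart of the support analysis: if $(y,s)$ contributes to $\phi_k(x,t)$, then $|s-t|<\delta_k$ and the original time $s-\tau_k$ satisfies $s-\tau_k\le t$ together with $t-(s-\tau_k)\le\tau_k+\delta_k<2\tau_k$. Hence the geometric lemma gives
\begin{align*}
\spt\tilde u_{\tau_k}(s)\subset\overline{E^{s-\tau_k,\sigma}}\subset\overline{E^{t,\sigma-\omega(2\tau_k)}}\subset\overline{E^{t,\sigma/2}}.
\end{align*}
Spatial convolution by a ball of radius $\delta_k<\sigma/4$ then yields $\spt\phi_k(t)\subset\overline{E^{t,\sigma/4}}\subset\subset E^t$, while the temporal cutoff gives $\spt\phi_k\subset\Omega\times[\tau_k/4,T-\tau_k/4]$. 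Together these show $\phi_k\in C_0^\infty(E,\mathds{R}^N)$ with the uniform parameter $\sigma':=\sigma/4$ required by the statement.

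Finally, I would verify $\phi_k\to u$ in the $V^p_q(E)$-norm by decomposing
\begin{align*}
\phi_k-u=(\phi_k-\tilde u_{\tau_k})+(\eta_{\tau_k}-1)u_{\tau_k}+(u_{\tau_k}-u).
\end{align*}
The first term vanishes by standard convergence of mollifications in $L^p(0,T;W^{1,p}(\Omega,\mathds{R}^N))\cap L^{q+1}(\Omega_T,\mathds{R}^N)$; the middle one by dominated convergence since $\eta_{\tau_k}\to 1$ pointwise and $|u_{\tau_k}|$ is dominated by $|u|$; and the last one by continuity of translations in Bochner $L^p$-spaces. The only nontrivial obstacle is geometric, namely propagating the support condition $\spt u(t)\subset\overline{E^{t,\sigma}}$ through the time shift and the time mollification without invoking \eqref{one sided growth condition} in the wrong direction—which is precisely what the coupling $\delta_k<\tau_k/4$ guarantees.
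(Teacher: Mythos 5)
Your proposal is correct and follows essentially the same strategy as the paper's proof: a backward-in-time space-time mollification (you realize the backward bias by an explicit time shift with $\delta_k\ll\tau_k$, the paper by a one-sidedly supported kernel $\zeta\in C_{0}^{\infty}(B_{1}(0)\times(0,1))$), with the one-sided excess bound \eqref{one sided growth condition} used in exactly the same direction to keep the support inside a fixed inner parallel set $\overline{E^{t,\sigma/4}}$. The geometric inclusion $E^{s,\sigma}\subset E^{t,\sigma-\omega(t-s)}$ that you isolate as a lemma is precisely the inline distance estimate $\dist(y,\partial E^s)\le \omega(t-s)+\epsilon+\sigma$ appearing in the paper's argument.
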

\begin{proof}
    Fix $u \in V_\mathrm{cpt}^{p,q}(E)$. Then, there exists $\sigma > 0$ with $\spt(u(t))\subset \overline{E^{t,2\sigma}}$ for a.e.~$t \in [0,T)$.
    Without loss of generality, we may assume that $u \equiv 0$ in the set $\Omega \times ([0,2\delta] \cup [T-2\delta,T])$ for some $\delta >0$, since the space of such maps is dense in $V_\mathrm{cpt}^{p,q}(E)$ with respect to the norm $\Vert \cdot \Vert_{V_{q}^{p}(E)}$. Consider a backward-in-time mollifying kernel $\zeta \in C_{0}^{\infty}(B_{1}(0) \times (0,1), [0, \infty))$ with $\iint_{\R^n \times (0,\infty)} \zeta \,\dx\dt =1$, and define $\zeta_{\epsilon}:= \epsilon^{-n-1} \zeta \big(\tfrac{x}{\epsilon}, \tfrac{t}{\epsilon}\big)$ for $0<\epsilon < \min\big\{ \delta, \tfrac{\sigma}{2},\omega^{-1}(\frac{\sigma}{2}) \big\}$, and
    \begin{align*}
        u_{\epsilon}:=u \ast \zeta_{\epsilon}.
    \end{align*}
    It follows that $u_{\epsilon} \in C^{\infty}(\Omega_{T},\mathds{R}^{N})$, and that $u_{\epsilon} \to u$ in
    $L^{p} \big(0,T;W_{0}^{1,p} (\Omega, \mathds{R}^{N}) \big) \cap L^{q+1}(\Omega_{T}, \R^N)$ as $\eps\downarrow0$.
    We now want to show that
    \begin{align}\label{formula for u_epsilon}
        \spt(u_{\epsilon}(t)) \subset \overline{E^{t, \sigma}} \quad \text{for a.e.~$t \in [0,T]$}.
    \end{align}
    Since we have assumed that $u \equiv 0$ in $\Omega \times ([0,2\delta] \cup [T-2\delta,T])$, and since $\varepsilon < \delta$, \eqref{formula for u_epsilon} follows directly from the construction for $t \in [0,2\delta] \cup [T-\delta,T]$.
    It remains to consider $t \in [2\delta, T-\delta]$ and $x \in \Omega \setminus \overline{E^{t,\sigma}}$.
    Note that
    \begin{align*}
        u_{\epsilon}(x,t) = \iint_{B_{\epsilon}(x) \times (t-\epsilon,t)} u(y,s)\zeta_{\epsilon}(x-y,t-s) \,\dy\ds,
    \end{align*}
    and fix $s \in (t-\epsilon,t)$ and $y \in B_{\epsilon}(x)$.
    Since $u \equiv 0$ on $(B_{\epsilon}(x) \times (t-\epsilon,t)) \setminus E$, we only need to examine the case $y \in B_{\epsilon}(x) \cap E^{s}$. 
    Further, let $\widetilde{x} \in \Omega \setminus E^t$ such that $\vert x- \widetilde{x} \vert = \dist(x, \Omega \setminus E^t) \leq \sigma$.
    By \eqref{one sided growth condition} and our choice of $\epsilon$, we deduce the estimate
    \begin{align*}
        \dist(y, \partial E^s ) &= \dist(y, \Omega \setminus E^s ) \\
        &\le \dist(\widetilde{x}, \Omega \setminus E^s ) + \vert y - \widetilde{x} \vert \\
        &\le \power{e}{c}(E^s, E^t) + \vert y-x \vert + \vert x- \widetilde{x} \vert \\
        &\le \omega(t-s) +\epsilon + \sigma \\
        &\le \omega \big( \omega^{-1}(\tfrac{\sigma}{2}) \big) + \tfrac{\sigma}{2} + \sigma \\
        &<2\sigma.
    \end{align*}
    It follows that $y \in E^s \setminus \overline{E^{s,2\sigma}}$, and in turn that $u$ vanishes a.e.~in $B_{\epsilon}(x) \times (t-\epsilon,t)$.
    Therefore, we have that $u_\varepsilon \equiv 0$ on $\Omega \setminus \overline{E^{t,\sigma}}$ for any $t \in [2\delta, T-\delta]$, i.e., we have proven \eqref{formula for u_epsilon}.
    Since $E$ is relatively open in $\Omega_T$, in particular we conclude that $u_\epsilon \equiv 0$ on $\partial E$.
    By a standard cut-off argument, we finally obtain that every $u_\epsilon$ can be approximated by maps from the space $C_{0}^{\infty}(E, \mathds{R}^{N})$.
    This implies the density of $C_{0}^{\infty}(E, \mathds{R}^{N})$ in $V_\mathrm{cpt}^{p,q}(E)$, as well as the second statement of the lemma.
\end{proof}

Next, analogous to \eqref{eq:curly-Vp0}, we consider the subspace of $V^{p,0}_q(E)$ given by
\begin{equation*}\label{def:VV}
    \mathcal{V}^{p,0}_q(E):=
    \Big\{u\in V^{p,0}_q(E)\colon u(t)\in W^{1,p}_0(E^t,\R^N) \mbox{ for a.e. }t\in[0,T)\Big\}.
\end{equation*}
Further, let $\widetilde{\eta} \in C^{0,1}(\mathds{R})$ be a cut-off function such that $\widetilde{\eta} \equiv 0$ in $(-\infty,1]$, $\widetilde{\eta}(r):=r-1$ for $r \in (1,2)$ and $\widetilde{\eta} \equiv 1$ in $[2, \infty)$.
For $\sigma >0$, we define
\begin{align}\label{definition_eta_sigma}
    \eta_{\sigma}(x,t):=
    \widetilde{\eta}\bigg( \frac{\dist(x,\Omega \setminus E^t)}{\sigma} \bigg)
    \quad \text{for }(x,t) \in \Omega_{T}.
\end{align}
With this notation at hand, we deduce the following convergence result.
\begin{lemma}\label{weak_convergence_curoff_density_part}
Assume that $E$ satisfies~\eqref{p-fat}, let $\eta_\sigma$ be given by \eqref{definition_eta_sigma}, and let $u \in \mathcal{V}^{p,0}_q(E)$.
Then, we have that
\begin{align}
    \eta_{\sigma} u \wto u \text{ weakly in } \mathcal{V}_{q}^{p,0}(E)\text{ as }\sigma \downarrow 0.
\end{align}
\end{lemma}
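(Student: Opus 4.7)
The plan is to establish the stronger assertion that $\eta_\sigma u\to u$ \emph{strongly} in the norm of $\mathcal{V}_q^{p,0}(E)$, which of course implies weak convergence. I would begin by verifying that $\eta_\sigma u\in\mathcal{V}_q^{p,0}(E)$: for a.e.~$t\in[0,T)$, the multiplier $\eta_\sigma(\cdot,t)$ is Lipschitz on $\Omega$ with Lipschitz constant at most $1/\sigma$, and it vanishes on $\{x\in\Omega:\dist(x,\Omega\setminus E^t)\le\sigma\}\supset\Omega\setminus E^t$. Hence $\eta_\sigma u$ inherits the zero boundary values from $u$, belongs to $L^p(0,T;W^{1,p}_0(E^t,\R^N))\cap L^{q+1}(\Omega_T,\R^N)$, and has the weak gradient
\begin{align*}
D(\eta_\sigma u)=\eta_\sigma\,Du+u\otimes D\eta_\sigma.
\end{align*}

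Next I would handle the easy convergences via dominated convergence. Since $E$ is relatively open in $\Omega_T$, each slice $E^t$ is open in $\R^n$, so $\dist(x,\Omega\setminus E^t)>0$ for every $(x,t)\in E$, which gives $\eta_\sigma(x,t)\to 1$ pointwise on $E$ as $\sigma\downarrow 0$. Combined with the uniform bounds $|\eta_\sigma u|\le|u|\in L^{q+1}(\Omega_T,\R^N)$ and $|\eta_\sigma Du|\le|Du|\in L^p(\Omega_T,\R^{Nn})$, this yields the strong convergences $\eta_\sigma u\to u$ in $L^{q+1}(\Omega_T,\R^N)$ and $\eta_\sigma Du\to Du$ in $L^p(\Omega_T,\R^{Nn})$.

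The main obstacle, and the whole point of assumption~\eqref{p-fat}, is the cut-off term $u\otimes D\eta_\sigma$, whose support is contained in the thin strip
\begin{align*}
A_\sigma:=\big\{(x,t)\in E:\sigma<\dist(x,\Omega\setminus E^t)<2\sigma\big\},
\end{align*}
on which $|D\eta_\sigma|$ blows up like $1/\sigma$. On $A_\sigma$ one has $\dist(x,\Omega\setminus E^t)<2\sigma$, so
\begin{align*}
\tfrac{1}{\sigma}<\tfrac{2}{\dist(x,\Omega\setminus E^t)}\le\tfrac{2}{\dist(x,\partial E^t)},
\end{align*}
where the last step uses $\dist(x,\partial E^t)=\dist(x,\R^n\setminus E^t)\le\dist(x,\Omega\setminus E^t)$ for $x\in E^t$. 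This yields the decisive pointwise bound
\begin{align*}
|u\otimes D\eta_\sigma|\le\frac{2\,|u|}{\dist(\cdot,\partial E^t)}\,\chi_{A_\sigma}.
\end{align*}
By~\eqref{p-fat} the complements $\R^n\setminus E^t$ are uniformly $p$-fat, so Hardy's inequality (Lemma~\ref{lem:Hardy_inequality}) applied slicewise and integrated in time shows $|u|/\dist(\cdot,\partial E^t)\in L^p(E)$. Since $\chi_{A_\sigma}(x,t)\to 0$ for a.e.~$(x,t)\in E$ as $\sigma\downarrow 0$, dominated convergence gives $u\otimes D\eta_\sigma\to 0$ strongly in $L^p(\Omega_T,\R^{Nn})$. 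Assembling the three convergences produces $\eta_\sigma u\to u$ in the norm of $\mathcal{V}_q^{p,0}(E)$, and a fortiori weakly, which is the claim. The hard part is exactly the singularity of $D\eta_\sigma$ near $\partial E^t$, and Hardy's inequality is what converts this blow-up into an integrable dominating function.
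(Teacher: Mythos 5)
Your argument is correct, and it in fact proves more than the lemma asserts. The paper's proof uses the same two ingredients in the same roles --- dominated convergence for $\eta_\sigma u\to u$ in $L^{\max\{p,q+1\}}$, and Hardy's inequality \eqref{ineq:Hardy_inequality} to control the term $|u|\,|D\eta_\sigma|$ on the annulus $\{\sigma<\dist(x,\Omega\setminus E^t)\le 2\sigma\}$ --- but it stops at a \emph{uniform bound} $\iint_{\Omega_T}|D(\eta_\sigma u)|^p\,\dx\dt\le c(n,p,\alpha)\iint_E|Du|^p\,\dx\dt$, deduces boundedness of $(\eta_\sigma u)_{\sigma>0}$ in $\mathcal{V}^{p,0}_q(E)$, and identifies the weak limit through the strong $L^p$ convergence. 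You instead keep the indicator $\chi_{A_\sigma}$ in the Hardy-type majorant and observe that it vanishes pointwise on $E$ as $\sigma\downarrow 0$, so that dominated convergence gives $u\otimes D\eta_\sigma\to 0$ strongly in $L^p$ and hence $\eta_\sigma u\to u$ strongly in the norm of $\mathcal{V}^{p,0}_q(E)$. This is a genuine (if modest) strengthening: it bypasses the weak-compactness/limit-identification step entirely, and the strong convergence would also let one skip the Mazur-lemma step in the proof of Proposition~\ref{prop:C_0_inf_is_dense_in_curly_V_q_p,0}, where the paper must pass from the weak convergence $u_\sigma\wto u$ to strongly convergent convex combinations. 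All the individual steps you use check out: $\dist(x,\partial E^t)=\dist(x,\R^n\setminus E^t)\le\dist(x,\Omega\setminus E^t)$ for $x\in E^t$ since $E^t$ is open and $\Omega\setminus E^t\subset\R^n\setminus E^t$, and Hardy's inequality applies because $u(t)\in W^{1,p}_0(E^t,\R^N)$ for a.e.\ $t$ by membership in $\mathcal{V}^{p,0}_q(E)$ together with the uniform $p$-fatness \eqref{p-fat}.
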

\begin{proof}
First, by the dominated convergence theorem, we have that
\begin{equation}
    \mbox{$\eta_{\sigma}u \to u$ strongly in $L^{\max \{p,q+1\}}(E, \mathds{R}^{N})$ as $\sigma \downarrow 0$.}
    \label{eq:aux-density}
\end{equation}
Since $\vert D\eta_{\sigma} \vert \le \tfrac{1}{\sigma}$ and by Hardy's inequality \eqref{ineq:Hardy_inequality}, we obtain that
    \begin{align*}
        \iint_{\Omega_{T}} \vert D(\eta_{\sigma}u)\vert^{p} \,\dx\dt
        &\le
        2^{p-1} \iint_{E} \vert Du \vert^{p} \,\dx\dt + 2^{p-1} \int_0^{T} \int_{E^{t,\sigma} \setminus E^{t,2 \sigma}} \vert D \eta_{\sigma} \vert^{p} \vert u \vert^{p} \,\dx\dt \\
        &\le 2^{p-1} \iint_{E} \vert Du \vert^{p} \,\dx\dt + 2^{2p-1} \int_0^{T} \int_{E^t} \bigg| \frac{u(x,t)}{\dist(x, \partial E^t)} \bigg|^{p} \,\dx\dt \\
        &\le c(n,p,\alpha) \iint_{E} \vert Du \vert^{p} \,\dx\dt.
    \end{align*}
    This implies that $(\eta_\sigma u)_{\sigma>0}$ is bounded in $\mathcal{V}^{p,0}_q(E)$.
    Thus, $(\eta_\sigma u)_{\sigma>0}$ admits a limit with respect to weak convergence in $\mathcal{V}^{p,0}_q(E)$, which is determined by \eqref{eq:aux-density}.
\end{proof}

Now we are in the position to prove the following density result under the $p$-fatness condition~\eqref{p-fat}.
\begin{proposition}\label{prop:C_0_inf_is_dense_in_curly_V_q_p,0}
    Assume that $E$ satisfies~\eqref{p-fat} and~\eqref{one sided
      growth condition}. Then $C_{0}^{\infty}(E,\mathds{R}^{N})$ is
    dense in $\mathcal{V}^{p,0}_q(E)\subset V^{p,0}_q(E)$ with respect to the norm topology in $V^{p,0}_q(E)$.
\end{proposition}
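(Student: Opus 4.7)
The plan is to combine Lemma~\ref{weak_convergence_curoff_density_part} (which provides a weak approximation of any $u \in \mathcal{V}^{p,0}_q(E)$ by the cut-offs $\eta_\sigma u$, which have support compactly contained in the time slices $E^t$) with Lemma~\ref{intermediate_density_argument} (which says that maps in $V_\mathrm{cpt}^{p,q}(E)$ can be approximated in the $V^p_q$-norm by functions in $C_0^\infty(E,\R^N)$). The only missing ingredient is a passage from the weak convergence in Lemma~\ref{weak_convergence_curoff_density_part} to norm convergence, which will be accomplished via Mazur's lemma.

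More precisely, fix $u \in \mathcal{V}^{p,0}_q(E)$ and a sequence $\sigma_j\downarrow 0$. Lemma~\ref{weak_convergence_curoff_density_part} applied along this sequence yields the weak convergence $\eta_{\sigma_j} u \wto u$ in $\mathcal{V}^{p,0}_q(E)$, hence in particular in the reflexive Banach space $V^{p,0}_q(E)$. By Mazur's lemma there exist $N_k\in\N$, coefficients $\lambda^{(k)}_j\geq 0$ with $\sum_{j=k}^{N_k}\lambda^{(k)}_j=1$, and convex combinations
\begin{align*}
    w_k := \sum_{j=k}^{N_k} \lambda^{(k)}_j \, \eta_{\sigma_j} u
\end{align*}
such that $w_k \to u$ strongly in $V^{p,0}_q(E)$ as $k\to\infty$. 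Because $\eta_{\sigma_j}(\cdot,t)$ vanishes on $E^t\setminus \overline{E^{t,\sigma_j}}$, the convex combination $w_k(t)$ is supported in $\overline{E^{t,\sigma_\ast(k)}}$, where $\sigma_\ast(k):=\min\{\sigma_k,\dots,\sigma_{N_k}\}>0$. In particular, every $w_k$ belongs to $V_\mathrm{cpt}^{p,q}(E)$.

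With the $w_k$'s at hand, Lemma~\ref{intermediate_density_argument} yields, for each $k$, an approximating sequence $(\phi_{k,m})_{m\in\N}\subset C_0^\infty(E,\R^N)$ converging to $w_k$ in $V^{p,0}_q(E)$ as $m\to\infty$. A standard diagonal argument, in which we pick $m_k$ so large that $\|\phi_{k,m_k}-w_k\|_{V^{p,0}_q(E)}<\tfrac{1}{k}$, produces $\phi_k:=\phi_{k,m_k}\in C_0^\infty(E,\R^N)$ with
\begin{align*}
    \|\phi_k - u\|_{V^{p,0}_q(E)}
    \leq \|\phi_k - w_k\|_{V^{p,0}_q(E)}
    + \|w_k - u\|_{V^{p,0}_q(E)}
    \xrightarrow{k\to\infty} 0,
\end{align*}
which proves the claim.

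The only non-routine step is the weak-to-strong passage, and the main care needed there is to ensure that the convex combinations remain in $V_\mathrm{cpt}^{p,q}(E)$; this works because a finite convex combination of cut-offs $\eta_{\sigma_j}u$ still has support inside some inner parallel set $\overline{E^{t,\sigma_\ast(k)}}$ with $\sigma_\ast(k)>0$. All other ingredients (Hardy's inequality, dominated convergence, the one-sided growth condition~\eqref{one sided growth condition}) enter already through the statements of Lemmas~\ref{intermediate_density_argument} and~\ref{weak_convergence_curoff_density_part} and need not be invoked again.
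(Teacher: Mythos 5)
Your proposal is correct and follows essentially the same route as the paper: cut off with $\eta_\sigma$, upgrade the weak convergence from Lemma~\ref{weak_convergence_curoff_density_part} to strong convergence via Mazur's lemma (observing that convex combinations stay in $V_\mathrm{cpt}^{p,q}(E)$), and finish with Lemma~\ref{intermediate_density_argument} and a diagonal argument. Your explicit remark that the convex combination is supported in $\overline{E^{t,\sigma_\ast(k)}}$ with $\sigma_\ast(k)>0$ is a nice touch the paper leaves implicit; note only that reflexivity is not needed for Mazur's lemma.
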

  
\begin{proof}
    Choose a function $u \in \mathcal{V}^{p,0}_q(E)$ and define $u_{\sigma}:=
    \eta_{\sigma} u$, where $\sigma > 0$ and $\eta_{\sigma}$ is as in
    \eqref{definition_eta_sigma}. By definition of $\eta_\sigma$, we find that $u_{\sigma} \in V_\mathrm{cpt}^{p,q}(E)$, and by Lemma~\ref{weak_convergence_curoff_density_part} we obtain the weak convergence $u_{\sigma} \wto u$ in $\mathcal{V}^{p,0}_q(E)$ as $\sigma \downarrow 0$. 
    By Mazur's lemma, for any $k \in \N$ there exists a convex combination
    \begin{align*}
        V_\mathrm{cpt}^{p,q}(E) \ni
        v_{k}
        :=\sum_{\ell = 1}^{N_{k}} \lambda_{\ell} u_{\sigma_{\ell}},
    \end{align*}
    where $N_{k} \in \mathds{N}$, $\lambda_{\ell} \in [0,1]$ and $\sigma_{\ell} > 0$ for $\ell \in \{1,\ldots,N_{k}\}$, such that $v_{k} \to u$ strongly in $\mathcal{V}_{q}^{p,0}(E)$ as $k \to \infty$.
    This gives us the density of $V_\mathrm{cpt}^{p,q}(E)$ in $\mathcal{V}_{q}^{p,0}(E)$.
    Combining this with Lemma~\ref{intermediate_density_argument} implies the statement of the proposition. 
\end{proof}

Since $V^{p,0}_q(E) = \mathcal{V}^{p,0}_q(E)$ provided that the stronger measure density condition \eqref{ineq:lower_measure_bound} holds, see Remark \ref{rem:V=V}, we immediately conclude the following result.
\begin{corollary}\label{cor:C_0_inf_is_dense_in_V_q_p,0}
Assume that $E$ satisfies~\eqref{ineq:lower_measure_bound} and~\eqref{one sided growth condition}. Then $C_{0}^{\infty}(E,\mathds{R}^{N})$ is dense in $V^{p,0}_q(E)$ with respect to the norm topology in $V^{p,0}_q(E)$.
\end{corollary}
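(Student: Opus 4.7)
The plan is to reduce the statement directly to Proposition~\ref{prop:C_0_inf_is_dense_in_curly_V_q_p,0} by verifying that the hypotheses there are implied by the stronger assumptions of the corollary. First, I would invoke Remark~\ref{rem:V=V}, which records two consequences of the measure density condition~\eqref{ineq:lower_measure_bound}: on the one hand, for a.e.~$t\in[0,T)$ the complement $\mathds{R}^n\setminus E^t$ is uniformly $p$-fat with a parameter $\alpha=\alpha(n,p,\delta)>0$ (so that \eqref{p-fat} holds), and on the other hand, the spaces coincide, $\mathcal{V}^{p,0}_q(E)=V^{p,0}_q(E)$.

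With \eqref{p-fat} established and \eqref{one sided growth condition} assumed, Proposition~\ref{prop:C_0_inf_is_dense_in_curly_V_q_p,0} applies and yields that $C_0^\infty(E,\mathds{R}^N)$ is dense in $\mathcal{V}^{p,0}_q(E)$ with respect to the norm of $V^{p,0}_q(E)$. Substituting the identification $\mathcal{V}^{p,0}_q(E)=V^{p,0}_q(E)$ from Remark~\ref{rem:V=V} gives the density of $C_0^\infty(E,\mathds{R}^N)$ in the full space $V^{p,0}_q(E)$, which is the content of the corollary.

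Since the two required ingredients are both recorded in the excerpt (Remark~\ref{rem:V=V} and Proposition~\ref{prop:C_0_inf_is_dense_in_curly_V_q_p,0}), there is no genuine obstacle in the present deduction; the substantive work has been carried out in Lemma~\ref{intermediate_density_argument} and Lemma~\ref{weak_convergence_curoff_density_part} leading to Proposition~\ref{prop:C_0_inf_is_dense_in_curly_V_q_p,0}, together with the classical fact (cited in Remark~\ref{rem:V=V}) that a measure density condition implies uniform $p$-fatness and that $W^{1,p}_0$-traces can be characterized via capacity under the $p$-fatness assumption.
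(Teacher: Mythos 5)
Your proposal is correct and coincides with the paper's own deduction: the paper likewise cites Remark~\ref{rem:V=V} to get both the uniform $p$-fatness of $\R^n\setminus E^t$ and the identification $\mathcal{V}^{p,0}_q(E)=V^{p,0}_q(E)$ from the measure density condition, and then applies Proposition~\ref{prop:C_0_inf_is_dense_in_curly_V_q_p,0}. No gaps.
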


\begin{remark}\label{rem:C_0_inf_is_dense_in_V_p,0}
Applying Corollary \ref{cor:C_0_inf_is_dense_in_V_q_p,0} with the choice $q=p-1$, we deduce that $C_{0}^{\infty}(E,\mathds{R}^{N})$ is dense in $V^{p,0}(E)$ with respect to the norm topology in $V^{p,0}(E)$.
\end{remark}

\section{Proof of Theorem~\ref{thm:existence_in_nondecreasing_domains}}
\label{sec:existence}
The proof of Theorem~\ref{thm:existence_in_nondecreasing_domains} given in this section is divided into several steps.

\subsection{Time discretization}
For some $\ell \in \N$, fix a step size $h_\ell := \frac{T}{\ell}$ and set $u_{\ell,0} := u_o \in L^{q+1}(\Omega,\R^N)$, where $u_o$ has been extended from $E^0$ to $\Omega$ by $u_{\ast}$.
Next, for $i=1,\ldots,\ell$ iteratively define $u_{\ell,i}$ as the minimizer of the elliptic variational functional
\begin{align*}
	\F_\ell[w;u_{\ell,i-1}]
	:=
	\int_\Omega f(x,w,Dw) \,\dx
	+ \tfrac{1}{h_\ell} \int_\Omega \b[u_{\ell,i-1},w] \,\dx
\end{align*}
in the class $w \in L^{q+1}(\Omega, \mathds{R}^{N}) \cap V_{ih_\ell}$; see Proposition \ref{prop:direct-method} below for the existence of such a minimizer.
In this context, by \eqref{compatibility_condition_for_u_o_and_u_ast}$_{3}$ with $\phi \equiv 0$ we have that
\begin{align*}
    0
    \leq
	\F_\ell[u_{\ast};u_{\ell,i-1}]
	&=
	\int_\Omega f(x,u_{\ast},Du_{\ast}) \,\dx + \tfrac{1}{h_\ell} \int_\Omega \b[u_{\ell,i-1},u_{\ast}] \,\dx < \infty.
\end{align*}
This implies that $0 \leq \F_\ell[u_{\ell,i};u_{\ell,i-1}] < \infty$, and in particular, that
\begin{equation}
	\int_\Omega f(x,u_{\ell,i},Du_{\ell,i}) \,\dx < \infty
	\label{eq:finite_integral_uli}
\end{equation}
for any $i = 1,\ldots,\ell$.

\begin{proposition}
\label{prop:direct-method}
Let $\overline{u} \in L^{q+1}(\Omega, \mathds{R}^{N})$, $h >0$ and $t \in [0,T]$.
Then there exists a minimizer $u \in L^{q+1}(\Omega, \mathds{R}^{N}) \cap V_t$ of the elliptic variational functional
$$
	\F_h[w;\overline{u}]
	:=
	\int_\Omega f(x,w,Dw) \,\dx
	+ \tfrac{1}{h} \int_\Omega \b[\overline{u},w] \,\dx
$$
in the class of functions $w \in L^{q+1}(\Omega, \mathds{R}^{N}) \cap V_t$. 
\end{proposition}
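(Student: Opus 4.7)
The natural approach is the direct method of the calculus of variations applied to $\F_h[\,\cdot\,;\overline u]$ on the admissible class $\mathcal{A}:=L^{q+1}(\Omega,\R^N)\cap V_t$. First I would observe that $\F_h\ge 0$: the coercivity assumption \eqref{eq:integrand}$_3$ gives $f\ge 0$, and $\b[\overline u,\,\cdot\,]\ge 0$ because $\b[\overline u,w]$ is the Bregman divergence of the convex function $w\mapsto\tfrac1{q+1}|w|^{q+1}$. Since $u_\ast\in\mathcal{A}$ (by \eqref{compatibility_condition_for_u_o_and_u_ast}$_2$ and the definition of $V_t$) and \eqref{compatibility_condition_for_u_o_and_u_ast}$_3$ with $\phi\equiv 0$ yields $\int_\Omega f(x,u_\ast,Du_\ast)\,\dx<\infty$, the infimum $m:=\inf_{\mathcal{A}}\F_h[\,\cdot\,;\overline u]$ is finite. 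Pick a minimizing sequence $(w_k)\subset\mathcal{A}$.

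Next I would derive the a~priori bounds. From \eqref{eq:integrand}$_3$,
\begin{equation*}
    \nu\int_\Omega|Dw_k|^p\,\dx\le\int_\Omega f(x,w_k,Dw_k)\,\dx\le\F_h[w_k;\overline u]\le m+1
\end{equation*}
for large $k$. Writing $w_k=u_\ast+(w_k-u_\ast)$ with $w_k-u_\ast\in W^{1,p}_0(\Omega,\R^N)$, Poincaré's inequality then yields a uniform bound on $\|w_k\|_{W^{1,p}(\Omega,\R^N)}$. For the $L^{q+1}$-bound I would invoke Lemma~\ref{lem:b_and_abs_u_estimate} to estimate
\begin{equation*}
    \tfrac{1}{q+1}\int_\Omega|w_k|^{q+1}\,\dx\le 2\int_\Omega\b[\overline u,w_k]\,\dx+c(q)\int_\Omega|\overline u|^{q+1}\,\dx\le 2h(m+1)+c(q)\|\overline u\|_{L^{q+1}}^{q+1}.
\end{equation*}
Thus, along a subsequence (not relabeled), $w_k\wto u$ weakly in $W^{1,p}(\Omega,\R^N)$ and weakly in $L^{q+1}(\Omega,\R^N)$, while Rellich--Kondrachov gives $w_k\to u$ strongly in $L^p(\Omega,\R^N)$ and a.e.\ in $\Omega$. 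The admissibility $u\in V_t\cap L^{q+1}$ is preserved: the affine constraint $w=u_\ast$ a.e.\ in $\Omega\setminus E^t$ defines a convex and strongly closed, hence weakly closed, subset of $W^{1,p}_{u_\ast}(\Omega,\R^N)$, and $u\in L^{q+1}$ follows from the weak $L^{q+1}$-bound.

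It remains to show $\F_h[u;\overline u]\le\liminf_{k\to\infty}\F_h[w_k;\overline u]=m$. Splitting $\b[\overline u,w]=\tfrac{1}{q+1}|w|^{q+1}+\tfrac{q}{q+1}|\overline u|^{q+1}-\power{\overline u}{q}\cdot w$, the first summand is sequentially weakly lower semicontinuous on $L^{q+1}$ as a convex continuous functional, the second is constant, and the third is continuous with respect to weak $L^{q+1}$-convergence since $\power{\overline u}{q}\in L^{(q+1)/q}(\Omega,\R^N)$. Hence $w\mapsto\int_\Omega\b[\overline u,w]\,\dx$ is weakly lower semicontinuous. The main obstacle is the lower semicontinuity of $\int_\Omega f(x,w,Dw)\,\dx$, since by assumption $f$ satisfies no upper growth bound. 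However, \eqref{eq:integrand} precisely provides the hypotheses of Ioffe's lower semicontinuity theorem for convex integrands: $f(x,\cdot,\cdot)$ is convex, $f$ is nonnegative and a Carathéodory (indeed measurable-in-$x$, convex-in-$(u,\xi)$) integrand. Combined with the strong $L^p$ (and a.e.) convergence $w_k\to u$ and the weak $L^p$ convergence $Dw_k\wto Du$, Ioffe's theorem yields
\begin{equation*}
    \int_\Omega f(x,u,Du)\,\dx\le\liminf_{k\to\infty}\int_\Omega f(x,w_k,Dw_k)\,\dx.
\end{equation*}
Adding the two lower semicontinuity estimates gives $\F_h[u;\overline u]\le m$, so that $u$ is the desired minimizer. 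The only nontrivial ingredient in the whole argument is the Ioffe-type lower semicontinuity, which replaces the more elementary $p$-growth arguments one would use if an upper bound were available.
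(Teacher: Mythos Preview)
Your argument is correct, but the lower semicontinuity step is handled differently from the paper. You split $\F_h$ into the $f$-part and the $\b$-part, treat the latter componentwise, and for the former invoke Ioffe's theorem after using Rellich--Kondrachov to upgrade the weak $W^{1,p}$-convergence to strong $L^p$-convergence of $w_k$. The paper instead exploits the \emph{joint} convexity of $(u,\xi)\mapsto f(x,u,\xi)$ (and of $w\mapsto\b[\overline u,w]$) to argue that the entire functional $\F_h[\,\cdot\,;\overline u]$ is convex on $L^{q+1}\cap W^{1,p}$; it then checks strong lower semicontinuity by Fatou (after extracting an a.e.\ convergent subsequence), and concludes weak lower semicontinuity directly from the standard fact that a convex, strongly lower semicontinuous functional is weakly lower semicontinuous. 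The paper's route is more elementary in that it avoids both Ioffe and the compact embedding; on the other hand, your argument would still go through if $f$ were only convex in the gradient variable, since Ioffe does not require joint convexity, so in that sense your approach is the more flexible one.
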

\begin{proof}
    Take a minimizing sequence $(u_{j})_{j \in \mathds{N}} \subset L^{q+1}(\Omega, \mathds{R}^{N}) \cap V_t$ such that 
    \begin{align*}
        \lim_{j \to \infty} \F_h[u_{j};\overline{u}] = \inf_{w \in L^{q+1}(\Omega, \mathds{R}^{N}) \cap V_t} \F_h[w;\overline{u}].
    \end{align*}
    Since the infimum is finite by \eqref{compatibility_condition_for_u_o_and_u_ast}$_{3}$ with $\phi \equiv 0$ and the admissibility of $u_\ast$, without loss of generality, we may assume that $\F_h[u_{j};\overline{u}] < \infty$ for any $j \in \N$.
    In particular, since $(\F_h[u_{j};\overline{u}])_{j \in \N}$ is convergent, this implies that the sequence is bounded.
    Using Lemma \ref{lem:b_and_abs_u_estimate} to bound $\b[\overline u,u_j]$ from below, we obtain that 
    \begin{align*}
        \tfrac{1}{q+1} \vert u_{j} \vert^{q+1} \le 2 \b[\overline{u},u_{j}] + 2^{2+\frac{1}{q}} \tfrac{q}{q+1}\vert \overline{u} \vert^{q+1}.
    \end{align*}
    Together with the coercivity condition \eqref{eq:integrand}$_3$, this implies that
    \begin{align*}
        \tfrac{1}{2h(q+1)} \int_\Omega &\vert u_{ j} \vert^{q+1}  \,\dx + \nu \int_{\Omega} |Du_{j}|^p \,\dx \\
	&\leq
	\int_{\Omega} f(x,u_{j},Du_{j}) \,\dx +  \tfrac{1}{h} \int_\Omega  \b[\overline{u},u_{j}] \,\dx + 2^{1+\frac{1}{q}}\tfrac{q}{h(q+1)} \int_{\Omega}  \vert \overline{u} \vert^{q+1} \,\dx  \\
    &\leq
	\sup_{j \in \N} \F_h[u_{j};\overline{u}] + 2^{1+\frac{1}{q}}\tfrac{q}{h(q+1)} \int_{\Omega}  \vert \overline{u} \vert^{q+1} \,\dx.
    \end{align*}
    As a consequence, the sequence $(u_{j})_{j \in \mathds{N}}$ is bounded in $L^{q+1}(\Omega, \mathds{R}^{N}) \cap W^{1,p}(\Omega, \mathds{R}^{N})$.
    Therefore, there exists a subsequence (still denoted by $u_{j}$) and a limit map $u \in L^{q+1}(\Omega, \mathds{R}^{N}) \cap W^{1,p}(\Omega, \mathds{R}^{N})$ such that
    \begin{equation*}
    	\left\{
    	\begin{array}{ll}
    		u_{j} \wto u
    		&\text{weakly in $W^{1,p}(\Omega, \mathds{R}^{N})$ as $j \to \infty$,} \\[5pt]
    		u_{j} \wto u
    		&\text{weakly in $L^{q+1}(\Omega, \mathds{R}^{N})$ as $j \to \infty$.}
    	\end{array}
    	\right.
    \end{equation*}
    Since $u_{j} \in V_t$ for all $j \in \mathds{N}$, we have that $u \in V_t$.
    Next, we observe that $\F_h$ is lower semicontinuous with respect to strong convergence in $L^{q+1}(\Omega, \mathds{R}^{N}) \cap W^{1,p}(\Omega, \mathds{R}^{N})$. 
    To this end, consider a strongly convergent sequence $w_j\to w$ in $L^{q+1}(\Omega, \mathds{R}^{N}) \cap W^{1,p}(\Omega, \mathds{R}^{N})$. 
    By passing to a subsequence if necessary, we can assume $(w_j,Dw_j)\to (w,Dw)$ a.e.~in $\Omega$, as $j\to\infty$.
    Therefore, Fatou's lemma implies
    \begin{align*}
        \F_h[w;\overline{u}] &= \int_\Omega f(x,w,Dw) \,\dx
	   + \tfrac{1}{h} \int_\Omega \b[\overline{u},w] \,\dx \\
        &= \int_\Omega \liminf_{j \to \infty} f(x,w_{j},Dw_{j}) \,\dx
	   + \tfrac{1}{h} \int_\Omega \liminf_{j \to \infty} \b[\overline{u},w_{j}] \,\dx \\
        &\leq \liminf_{j \to \infty} \F_h[w_{j};\overline{u}].
    \end{align*}
    Since $\F_h[\cdot,\overline{u}]$ is convex, by \cite[Corollary 3.9]{Brezis} $\F_h$ is lower semicontinuous also with respect to weak convergence in $L^{q+1}(\Omega, \mathds{R}^{N}) \cap W^{1,p}(\Omega, \mathds{R}^{N})$. This gives us that
    \begin{align*}
        \F_h[u;\overline{u}] \le \liminf_{j \to \infty} \F_h[u_{j};\overline{u}] =\inf_{w \in L^{q+1}(\Omega) \cap V_t} \F_h[w;\overline{u}].
    \end{align*}
    Therefore, $u \in L^{q+1}(\Omega, \mathds{R}^{N}) \cap V_t$ is the desired minimizer.
\end{proof}

\subsection{Reformulation of the minimizing property}
Note that for any index $i \in \{1,\ldots,\ell\}$, $s \in (0,1)$ and $w \in V_{i h_\ell} \cap L^{q+1}(\Omega,\R^N)$, the convex combination
\begin{align*}
	w_s :=
	(1-s) u_{\ell,i} + sw
\end{align*}
is an admissible competitor for $u_{\ell,i}$.
Thus, the minimality of $u_{\ell,i}$ in this class gives us that
\begin{align*}
	\F_\ell[u_{\ell,i};u_{\ell,i-1}]
	\leq
	\F_\ell[w_s;u_{\ell,i-1}].
\end{align*}
Together with the convexity assumption \eqref{eq:integrand}$_2$, this implies that
\begin{align*}
	\int_\Omega f(x,u_{\ell,i},Du_{\ell,i}) \,\dx 
	&\leq
	\int_\Omega (1-s) f(x,u_{\ell,i},Du_{\ell,i}) + s f(x,w,Dw) \,\dx \\
	&\phantom{=}
	+ \tfrac{1}{h_\ell} \int_\Omega \b[u_{\ell,i-1}, w_s] - \b[u_{\ell,i-1}, u_{\ell,i}] \,\dx \\
	&=
	\int_\Omega (1-s) f(x,u_{\ell,i},Du_{\ell,i}) + s f(x,w,Dw) \,\dx \\
	&\phantom{=}
	+ \tfrac{1}{h_\ell} \int_\Omega \tfrac{1}{q+1} \big( |w_s|^{q+1} - |u_{\ell,i}|^{q+1} \big) - s \power{u_{\ell,i-1}}{q} \cdot (w - u_{\ell,i}) \,\dx.
\end{align*}
Reabsorbing the first term in the penultimate line of the preceding inequality (which is allowed by \eqref{eq:finite_integral_uli}) and dividing by $s>0$, we find that
\begin{align*}
	\int_\Omega f(x,u_{\ell,i},Du_{\ell,i}) \,\dx 
	&\leq
	\int_\Omega f(x,w,Dw) \,\dx \\
	&\phantom{=}
	+ \tfrac{1}{h_\ell} \int_\Omega \tfrac{1}{(q+1)s} \big( |w_s|^{q+1} - |u_{\ell,i}|^{q+1} \big) - \power{u_{\ell,i-1}}{q} \cdot (w - u_{\ell,i}) \,\dx.
\end{align*}
Since $\frac{1}{q+1} |\cdot|^{q+1}$ is convex, the map $s \mapsto \tfrac{1}{(q+1)s} \big( |w_s|^{q+1} - |u_{\ell,i}|^{q+1} \big)$ is monotone and converges a.e.~in $\Omega$ to $\power{u_{\ell,i}}{q} \cdot (w - u_{\ell,i})$ in the limit $s \downarrow 0$.
Therefore, passing to the limit in the preceding inequality by means of the dominated convergence theorem shows that
\begin{align}
	\int_\Omega &f(x,u_{\ell,i},Du_{\ell,i}) \,\dx 
	\nonumber \\ &\leq
	\int_\Omega f(x,w,Dw) \,\dx
	+ \tfrac{1}{h_\ell} \int_\Omega \big( \power{u_{\ell,i}}{q} - \power{u_{\ell,i-1}}{q} \big) \cdot (w - u_{\ell,i}) \,\dx
    \label{eq:mm_crucial_inequality}
\end{align}
holds true for any $w \in V_{i h_\ell} \cap L^{q+1}(\Omega,\R^N)$, $i=1,\ldots,\ell$.

\subsection{Energy estimates and weak convergence}
Let $t_{\ell,i} := i h_\ell$ for $i=-1,\ldots,\ell$ and glue the minimizers $u_{\ell,i}$ together to a function $u_\ell \colon \Omega \times (-h_\ell,T] \to \R^N$ that is piecewise constant with respect to the time variable.
More precisely, we set
\begin{align*}
	u_\ell(t) := u_{\ell,i}
	\quad \text{for $t \in J_{\ell,i} := (t_{\ell,i-1}, t_{\ell,i}]$ with $i = 0, \ldots, \ell$.}
\end{align*}
In order to deduce suitable energy estimates for $u_\ell$, we choose the admissible function $w \equiv u_{\ast}$ in \eqref{eq:mm_crucial_inequality}.
First, note that
\begin{align}\label{ineq_second_integrand_on_the_right-hand_side_1}
	\big| \power{u_{\ell,i-1}}{q} \cdot u_{\ell,i} \big|
	\leq
	|u_{\ell,i-1}|^q |u_{\ell,i}|
	\leq
	\tfrac{q}{q+1} |u_{\ell,i-1}|^{q+1} + \tfrac{1}{q+1} |u_{\ell,i}|^{q+1}
\end{align}
holds by Cauchy--Schwarz and Young's inequalities.
Multiplying \eqref{eq:mm_crucial_inequality} with $w \equiv u_{\ast}$ by $h_\ell$ and using ~\eqref{ineq_second_integrand_on_the_right-hand_side_1}, we find that
\begin{align*}
	&\iint_{\Omega \times J_{\ell,i}} f(x,u_{\ell,i}, Du_{\ell,i}) \,\dx\dt
	+ \tfrac{q}{q+1} \int_\Omega |u_{\ell,i}|^{q+1} \,\dx \\
	&\leq
	h_\ell \int_\Omega f(x,u_{\ast}, Du_{\ast}) \,\dx
	+ \tfrac{q}{q+1} \int_\Omega |u_{\ell,i-1}|^{q+1} \,\dx
    + \int_\Omega \big( \power{u_{\ell,i}}{q} - \power{u_{\ell,i-1}}{q} \big) \cdot u_{\ast} \,\dx
\end{align*}
holds true for any $i=1,\ldots,\ell$.
Summing up the preceding inequalities from $i=1,\ldots,m$ for some $m \leq \ell$,  reabsorbing the sum of the second terms on the right-hand side into the left-hand side and using ~\eqref{compatibility_condition_for_u_o_and_u_ast}$_{3}$ with $\phi \equiv 0$, we arrive at
\begin{align*}
	&\iint_{\Omega \times (0,mh_\ell]} f(x,u_\ell,Du_\ell) \,\dx\dt
	+ \tfrac{q}{q+1} \int_\Omega |u_\ell(m h_\ell)|^{q+1} \,\dx \\
	&\leq
	m h_\ell \int_\Omega f(x,u_{\ast}, Du_{\ast}) \,\dx
	+ \tfrac{q}{q+1} \int_\Omega |u_o|^{q+1} \,\dx
    + \int_\Omega \big( \power{u_{\ell}(mh_\ell)}{q} - \power{u_o}{q} \big) \cdot u_{\ast} \,\dx \\
    &\leq
	m h_\ell \int_\Omega f(x,u_{\ast}, Du_{\ast}) \,\dx
	+ \tfrac{q}{q+1} \int_\Omega |u_o|^{q+1} \,\dx
    \\ &\phantom{=}
    + \int_\Omega \big( \vert u_{\ell}(mh_\ell) \vert^{q} \vert u_{\ast} \vert + \vert u_o \vert^{q} \vert u_{\ast} \vert \big)  \,\dx.
\end{align*}
By Young's inequality, we find that
\begin{align*}
    \vert u_{\ell}(mh_\ell) \vert^{q} \vert u_{\ast} \vert + \vert u_o \vert^{q} \vert u_{\ast} \vert 
    \le \tfrac{q}{2(q+1)} |u_{\ell}(mh_\ell)|^{q+1} + \tfrac{q}{q+1} |u_o|^{q+1} + \tfrac{2^{q} +1}{q+1} |u_{\ast}|^{q+1}.
\end{align*}
Using this in the last integral on the right-hand side of the penultimate inequality, and reabsorbing the term involving $u_{\ell}(mh_\ell)$ into the left-hand side yields 
\begin{align*}
	&\iint_{\Omega \times (0,mh_\ell]} f(x,u_\ell,Du_\ell) \,\dx\dt
	+ \tfrac{q}{2(q+1)} \int_\Omega |u_\ell(m h_\ell)|^{q+1} \,\dx \\
	&\leq
	m h_\ell \int_\Omega f(x,u_{\ast}, Du_{\ast}) \,\dx
	+ \tfrac{2q}{q+1} \int_\Omega |u_o|^{q+1} \,\dx
    +  \tfrac{2^{q} +1}{q+1} \int_\Omega \vert u_{\ast} \vert^{q+1} \,\dx.
\end{align*}
Note that both terms on the left-hand side are non-negative.
On the one hand, omitting the first term, taking the supremum over $m=1,\ldots,\ell$ and using that $m h_\ell \leq T$, we infer
\begin{align}
	\sup_{t \in (0,T)} &\int_\Omega |u_\ell(t)|^{q+1} \,\dx
	\nonumber \\ &\leq
	c(q) \bigg( T \int_\Omega f(x,u_{\ast}, Du_{\ast}) \,\dx + \int_\Omega  \vert u_{\ast} \vert^{q+1} \,\dx \bigg)
    +2  \int_\Omega |u_o|^{q+1} \,\dx.
	\label{eq:energy_est_1_mm}
\end{align}
On the other hand, omitting the second term on the left-hand side of the penultimate inequality, choosing $m=\ell$ and using the coercivity assumption \eqref{eq:integrand}$_3$, we conclude that
\begin{align}
	\nu \iint_{\Omega_T} |Du_\ell|^p \,\dx\dt
	&\leq
	\iint_{\Omega_T} f(x,u_\ell,Du_\ell) \,\dx\dt \nonumber \\
	&\leq
	 T \int_\Omega f(x,u_{\ast}, Du_{\ast}) \,\dx + c(q) \int_\Omega |u_o|^{q+1} + \vert u_{\ast} \vert^{q+1} \,\dx. 
	\label{eq:energy_est_2_mm}
\end{align}
The energy estimates \eqref{eq:energy_est_1_mm} and \eqref{eq:energy_est_2_mm} show that the sequence $(u_\ell)_{\ell \in \N}$ is bounded in the spaces $L^\infty(0,T;L^{q+1}(\Omega,\R^N))$ and $ L^p(0,T;W^{1,p}_{u_{\ast}}(\Omega,\R^N))$.
Therefore, there exists a subsequence $\mathfrak{K} \subset \N$ and a limit map
\begin{align*}
	u \in L^\infty(0,T;L^{q+1}(\Omega,\R^N)) \cap  L^p \big(0,T;W^{1,p}_{u_{\ast}}(\Omega,\R^N) \big)
\end{align*}
such that
\begin{equation}
	\left\{
	\begin{array}{ll}
		u_\ell \wsto u
		&\text{weakly$^\ast$ in $L^\infty(0,T;L^{q+1}(\Omega,\R^N))$ as $\mathfrak{K} \ni \ell \to \infty$,} \\[5pt]
		u_\ell \wto u
		&\text{weakly in $L^p\big( 0,T;W^{1,p}_{u_{\ast}}(\Omega,\R^N) \big)$ as $\mathfrak{K} \ni \ell \to \infty$.}
	\end{array}
	\right.
	\label{eq:weak_convergence_mm}
\end{equation}

\subsection{Boundary values}\label{Section:Boundary_values}
In this section, we show that $u \in V_{q}^{p}(E)$, i.e. that the limit map admits the correct boundary values. This means that $u(t)=u_{\ast}$ a.e.~in $\Omega \setminus E^{t}$. We proceed as in \cite[Section 4.1.4]{BDSS18}. Let $\ell \in \mathds{N}$ and define
\begin{align*}
    E^{(\ell)} := \bigcup\limits_{i=1}^{\ell} Q_{\ell, i},
\end{align*}
with
\begin{align*}
    Q_{\ell, i}:= E^{t_{\ell, i}} \times I_{\ell, i} \text{ and } I_{\ell, i}:= [ t_{\ell, i-1}, t_{\ell, i} ).
\end{align*}
By \eqref{nondecreasing_condition}, we have that $E \subset E^{(\ell)}$, and by construction it follows that $u_{\ell} \equiv u_{\ast}$ a.e.~in $\Omega_{T} \setminus E^{(\ell)}$.
We fix a point $z_{o}:=(x_{o}, t_o) \in \Omega_{T} \setminus \overline{E}$, and introduce the notation
\begin{align*}
\Lambda_{\delta}(t_o):= (t_o - \delta, t_o + \delta) \text{ and } Q_{\delta}(z_{o}):= B_{\delta}(x_{o}) \times \Lambda_{\delta}(t_o). 
\end{align*}
At this stage, we claim that
\begin{align}\label{cylinder_claim}
    \exists \, \epsilon >0, \ell_{0} \in \mathds{N} \text{ such that } \forall \, \ell \ge \ell_{0}: Q_{\epsilon}(z_o) \subset \Omega_{T} \setminus \overline{E^{(\ell)}}.
\end{align}
Since $\Omega_{T} \setminus \overline{E}$ is open, we find $\epsilon > 0$ such that $Q_{2\epsilon}(z_o) \subset \Omega_{T} \setminus \overline{E}$.
By~\eqref{nondecreasing_condition} it follows that $B_{2\epsilon}(x_o) \subset \Omega \setminus \overline{E^{t}}$ for all $t \in (0, t_o+2\epsilon)$.
Now, assume that $\ell > \frac{T}{\epsilon}$ holds, which implies $h_{\ell} < \epsilon$.
Then there exists $i_o \in \{ 1, \ldots, \ell\}$ such that $i_o h_{\ell} \in [ t_o+\epsilon, t_o+2\epsilon)$.
As a consequence, we have that $B_{2\epsilon}(x_o) \subset \Omega \setminus \overline{E^{t_{\ell, i}}}$ for any $\N \ni i \le i_o$. 
Taking into consideration how $E^{(\ell)}$ was constructed, we find that
\begin{align*}
    Q_{\epsilon}(z_o) \subset B_{2\epsilon}(x_o) \times \Lambda_{\epsilon}(t_o) \subset \Omega_{T} \setminus \overline{E^{(\ell)}}.
\end{align*}
Therefore, \eqref{cylinder_claim} holds for any $\ell_{0} \in \mathds{N}$ satisfying $\ell_{0} > \frac{T}{\epsilon}$.

Next, by \eqref{eq:weak_convergence_mm}$_{2}$ and \eqref{compatibility_condition_for_u_o_and_u_ast}$_{2}$, we have that $u_{\ell}-u_{\ast} \wto u-u_{\ast}$ weakly in $L^{p}(Q_{\epsilon}(z_o), \mathds{R}^{N})$ as $\mathfrak{K} \ni \ell \to \infty$.
Thus, by lower semicontinuity we deduce that
\begin{align*}
    \iint_{Q_{\epsilon}(z_o)} \vert u - u_{\ast} \vert^{p} \,\dx\dt \le \liminf_{\mathfrak{K} \ni \ell \to \infty } \iint_{Q_{\epsilon}(z_o)} \vert u_{\ell} - u_{\ast} \vert^{p} \,\dx\dt = 0.
\end{align*}
Therefore, $u \equiv u_{\ast}$ a.e.~in $Q_{\epsilon}(z_o)$.
Since the point $z_o\in \Omega_{T} \setminus \overline{E}$ was arbitrary, this gives us that $u \equiv u_{\ast}$ a.e.~in $\Omega_{T} \setminus \overline{E}$.
Furthermore, the assertion that $u \equiv u_{\ast}$ a.e.~in $\Omega \times \{t\} \setminus \overline{E}$ for a.e.~$t \in [0,T)$ follows from a Fubini-type argument. Indeed, using assumption \eqref{eq:boundary_E_zero_measure} we observe that
\begin{align*}
    0=\L^{n+1}(\partial E) \equiv \L^{n+1} \big( \overline{E} \setminus E \big) = \int_0^{T} \L^{n} \big( (\Omega \times \{ t \}) \cap \big(\overline{E} \setminus E \big) \big) \, \dt,
\end{align*}
and therefore, $\L^{n} \big( (\Omega \times \{ t \}) \cap \big(\overline{E} \setminus E \big) \big)=0$ for a.e.~$t \in [0,T)$.
Finally, combining this with the identity
\begin{align*}
    \big(\Omega \setminus E^{t} \big) \times \{t\} = \big( (\Omega \times \{t\}) \setminus \overline{E} \big) \cup \big( (\Omega \times \{ t \}) \cap \big(\overline{E} \setminus E \big) \big)
\end{align*}
implies that $u(t)=u_{\ast}$ a.e.~in $\Omega \setminus E^{t}$ for a.e.~$t \in [0,T)$.
Consequently, we have that $u(t) \in V_{t}$ for the corresponding times $t$, and thus altogether that $u \in V_{q}^{p}(E)$.

\subsection{Convergence almost everywhere}\label{Section:Convergence_almost_everywhere_non_decreasing}
In this section, we analyze the convergence of $u_\ell$ to $u$ further by  proceeding as in \cite[Section 6.3]{BDMS18-1}.
To this end, note that $w = u_{\ell,i-1}$ is an admissible comparison map in \eqref{eq:mm_crucial_inequality}, since $u_{\ell,i-1} \in V_{i h_\ell}$ for $i=2, \ldots, \ell$ by \eqref{nondecreasing_condition}.
Together with Lemma \ref{lem:technical_lemma}, this gives us that
\begin{align*}
	\int_\Omega &f(x,u_{\ell,i},Du_{\ell,i}) \,\dx
	+ \tfrac{c(q)}{h_\ell} \int_\Omega \Big| \power{u_{\ell,i}}{\frac{q+1}{2}} - \power{u_{\ell,i-1}}{\frac{q+1}{2}} \Big|^2 \,\dx \\
	&\leq
	\int_\Omega f(x,u_{\ell,i},Du_{\ell,i}) \,\dx
	+ \tfrac{1}{h_\ell} \int_\Omega \big( \power{u_{\ell,i}}{q} - \power{u_{\ell,i-1}}{q} \big) \cdot (u_{\ell,i} - u_{\ell,i-1}) \,\dx \\
	&\leq
	\int_\Omega f(x,u_{\ell,i-1},Du_{\ell,i-1}) \,\dx.
\end{align*}
Setting $2 \leq i_1 < i_2 \leq \ell$ and iterating the preceding inequality from $i=i_1+1, \ldots, i_2$, we obtain that
\begin{align*}
	\int_\Omega &f(x,u_{\ell,i_2},Du_{\ell,i_2}) \,\dx
	+ \tfrac{c(q)}{h_\ell} \sum_{i=i_1+1}^{i_2} \int_\Omega \Big| \power{u_{\ell,i}}{\frac{q+1}{2}} - \power{u_{\ell,i-1}}{\frac{q+1}{2}} \Big|^2 \,\dx \\
	&\leq
	\int_\Omega f(x,u_{\ell,i_1},Du_{\ell,i_1}) \,\dx \\
	&\leq
	\int_\Omega f(x,u_{\ell,i_o},Du_{\ell,i_o}) \,\dx
\end{align*}
for any $i_o \in \{1, \ldots, i_1\}$.
In the last line, we have used that the second term on the left-hand side of the penultimate inequality is non-negative and can thus be omitted when iterating further.
Denoting the backward difference quotient by
\begin{align*}
	\Delta_{-h_\ell} w
	:=
	\tfrac{1}{h_\ell} (w(t) - w(t-h_\ell)),
\end{align*}
we rewrite the preceding inequality as
\begin{align*}
	\int_\Omega &f(x,u_\ell(i_2 h_\ell),Du_\ell(i_2 h_\ell)) \,\dx
	+ c(q) \int_{i_1 h_\ell}^{i_2 h_\ell} \int_\Omega \Big| \Delta_{-h_\ell} \power{u_\ell}{\frac{q+1}{2}} \Big|^2 \,\dx \\
	&\leq
	\int_\Omega f(x,u_\ell(t),Du_\ell(t)) \,\dx
\end{align*}
for $t \in [h_\ell, i_1 h_\ell]$.
Let $0 < \epsilon < \tau < T$, $\ell > \frac{4 T}{\epsilon}$ and choose $i_1 := \big\lfloor \frac{\epsilon}{h_\ell} \big\rfloor$ and $i_2 := \big\lceil \frac{\tau}{h_\ell} \big\rceil$.
Since the left-hand side of the preceding inequality is independent of $t$, by taking the average integral over $t \in [h_\ell, \epsilon - h_\ell]$ and using the energy estimate \eqref{eq:energy_est_2_mm}, we conclude that
\begin{align}
	\int_\Omega &f(x,u_\ell(\tau),Du_\ell(\tau)) \,\dx
	+ c(q) \iint_{\Omega \times (\epsilon,\tau)} \Big| \Delta_{-h_\ell} \power{u_\ell}{\frac{q+1}{2}} \Big|^2 \,\dx
    \nonumber \\ &\leq
	\bint_{h_\ell}^{\epsilon - h_\ell} \int_\Omega f(x,u_\ell,Du_\ell) \,\dx\dt
    \nonumber \\ &\leq
	\tfrac{1}{\epsilon - 2h_\ell} \bigg( 
	T \int_\Omega f(x,u_{\ast}, Du_{\ast}) \,\dx
	+ c(q)  \int_\Omega |u_o|^{q+1} + \vert u_{\ast} \vert^{q+1} \,\dx \bigg).
    \label{bound-time-derivative}
\end{align}
By the coercivity condition \eqref{eq:integrand}$_3$, this implies in particular that
\begin{align}
	\sup_{\tau \in (\epsilon,T)} &\int_\Omega |Du_\ell|^p \,\dx
	\nonumber \\ &\leq
	\tfrac{1}{(\epsilon - 2h_\ell)\nu} \bigg( T \int_\Omega f(x,u_{\ast}, Du_{\ast}) \,\dx
	+ c(q) \int_\Omega |u_o|^{q+1} + \vert u_{\ast} \vert^{q+1} \,\dx \bigg).
    \label{Linfty-Lp-bound}
\end{align}
Because of~\eqref{eq:energy_est_1_mm}, \eqref{bound-time-derivative},
and~\eqref{Linfty-Lp-bound}, the assumptions of Lemma \ref{lem:compactness} are satisfied in $\Omega \times (\epsilon,T)$.
Therefore, there exists a subsequence $\mathfrak{K} \subset \N$ such that
\begin{equation}
	\power{u_\ell}{\frac{q+1}{2}} \to \power{u}{\frac{q+1}{2}}
	\quad \text{strongly in $L^1 \big(\Omega \times (\epsilon,T),\R^N \big)$ as $\mathfrak{K} \ni \ell \to \infty$}.
    \label{eq:strong_U_l_powerq+1_half_convergence_mm}
\end{equation}
Since $\epsilon>0$ was arbitrary, we find that there exists another subsequence still denoted by $\mathfrak{K} \subset \N$, such that
\begin{equation}
	u_\ell \to u
	\quad \text{a.e.~in $\Omega_T$ as $\mathfrak{K} \ni \ell \to \infty$.}
	\label{eq:ae_convergence_mm}
\end{equation}
Moreover, we will now have a closer look on the time derivative of our constructed solutions. Omitting the first term on the left-hand side in \eqref{bound-time-derivative} gives us an estimate for the finite difference quotient of $\power{u_\ell}{\frac{q+1}{2}}$. Therefore, there exists a subsequence, still denoted by $\mathfrak{K} \subset \N$, such that $\Delta_{-h_\ell} \power{u_\ell}{\frac{q+1}{2}} \wto w$ in $L^2(\Omega \times (\epsilon,T),\R^N)$ as $\mathfrak{K} \ni \ell \to \infty$.
Combining this with \eqref{eq:strong_U_l_powerq+1_half_convergence_mm} yields for all $\phi \in C_{0}^{\infty}(\Omega\times(\eps,T),\R^N)$ that
\begin{align*}
    \iint_{\Omega \times (\epsilon,T)} \power{u}{\frac{q+1}{2}} \cdot\partial_{t}\phi \,\dx\dt
    &= \lim_{\mathfrak{K} \ni \ell \to \infty} \iint_{\Omega \times (\epsilon,T)} \power{u_\ell}{\frac{q+1}{2}} \cdot \Delta_{h_\ell} \phi  \,\dx\dt \\
    &= - \lim_{\mathfrak{K} \ni l \to \infty} \iint_{\Omega \times (\epsilon,T)} \Delta_{-h_\ell} \power{u_\ell}{\frac{q+1}{2}} \cdot  \phi  \,\dx\dt \\
    &= -\iint_{\Omega \times (\epsilon,T)}  w \cdot  \phi  \,\dx\dt.
\end{align*}
From this we conclude that $w = \partial_{t} \power{u}{\frac{q+1}{2}} \in L^2(\Omega \times (\epsilon,T),\R^N)$.
Furthermore, since we have that $\partial_{t} \power{\vert u \vert }{\frac{q+1}{2}} \in L^2(\Omega \times (\epsilon,T),\R^N)$ by the chain rule, it follows that
\begin{align*}
    \partial_{t} \boldsymbol{u}^{q+1} = \partial_{t} \power{u}{\frac{q+1}{2}} \vert u \vert^{\frac{q+1}{2}} + \power{u}{\frac{q+1}{2}} \partial_{t} \vert u \vert^{\frac{q+1}{2}} \in L^1 \big(\Omega \times (\epsilon,T),\R^N \big).
\end{align*}
Therefore, for any fixed $\epsilon > 0$ and for all $t_o, t \in ( \epsilon, T ] $, we obtain that
\begin{align*}
    \lim_{t \to t_o} \big\Vert \boldsymbol{u}^{q+1} (t) - \boldsymbol{u}^{q+1} (t_o) \big\Vert_{L^1(\Omega,\R^N)}
    \le
    \lim_{t \to t_o} \iint_{\Omega \times (t_o, t)} \big\vert \partial_{t} \boldsymbol{u}^{q+1} (s) \big\vert \,\dx\ds = 0.
\end{align*}
Together with Lemma \ref{lem:strong_L^q+1_convergence}, this implies that $u(t) \to u(t_o)$ in $L^{q+1}(\Omega,\R^N)$ as $t \to t_o$.
Since $\epsilon > 0$ was arbitrary, we conclude that $u \in C^{0}(0,T; L^{q+1}(\Omega,\R^N))$.

\subsection{Variational inequality for the limit map}\label{Section:Variational inequality for the limit map_nondecreasing}
First, setting $m:= \big\lfloor \tfrac{\tau}{h_\ell} \big\rfloor$, by definition of $u_\ell$ we deduce from \eqref{eq:mm_crucial_inequality} that
\begin{align}\label{crucial_inequality_appr}
    \iint_{\Omega_{\tau}}& f(x,u_{\ell},Du_{\ell}) \,\dx\dt \nonumber \\
    &=\sum_{i=1}^{m} \iint_{\Omega \times (t_{\ell, i-1}, t_{\ell, i}]} f(x, u_{\ell,i}, Du_{\ell,i}) \,\dx\dt \nonumber \\
    &\phantom{\sum_{i=1}^{m}}
    + \iint_{\Omega \times (mh_\ell, \tau]} f(x, u_{\ell,m}, Du_{\ell,m}) \,\dx\dt \nonumber \\
    &\le
    \sum_{i=1}^{m} \iint_{\Omega \times (t_{\ell, i-1}, t_{\ell, i}]} f(x, w, Dw) + \tfrac{1}{h_\ell} \big( \power{u_{\ell,i}}{q} - \power{u_{\ell,i-1}}{q} \big) \cdot (w - u_{\ell,i} )\,\dx\dt \nonumber \\
    &\phantom{=}
    + \iint_{\Omega \times (mh_\ell, \tau]} f(x, w, Dw) + \tfrac{1}{h_\ell} \big( \power{u_{\ell,i}}{q} - \power{u_{\ell,i-1}}{q} \big) \cdot (w - u_{\ell,i} ) \,\dx\dt \nonumber \\
    &= \iint_{\Omega_{\tau}} f(x,w,Dw) + \Delta_{-h_\ell} \big( \power{u_\ell}{q} \big) \cdot (w-u_{\ell}) \,\dx\dt
\end{align}
holds for any $\tau \in (0,T]$ and any map $w \in L^{q+1}(\Omega_{\tau}, \mathds{R}^{N}) \cap L^p \big(0,\tau;W^{1,p}_{u_{\ast}}(\Omega,\R^N)\big)$ with $w(t) \in  V_{\lceil \frac{t}{h_\ell} \rceil h_\ell}$ for a.e.~$t \in [0,T]$.
In particular, since $E$ is nondecreasing, for any map $v \in V_q^{p}(E)$ with $\partial_{t}v \in L^{q+1}(\Omega_{T}, \mathds{R}^{N})$ we find that $v(t) \in V_{\lceil \frac{t}{h_\ell} \rceil h_\ell}$ for a.e.~$t \in [0,T]$.
Thus, extending $v$ to negative times by $v(0)$, using it as a comparison map in \eqref{crucial_inequality_appr}, and applying Lemma \ref{lem:finite_integration_by_parts_formula} we obtain that
\begin{align}
    \iint_{\Omega_{\tau}} &f(x, u_{\ell}, Du_{\ell}) \,\dx\dt
    \nonumber\\ &\le
    \iint_{\Omega_{\tau}} f(x,v,Dv) \,\dx\dt + \iint_{\Omega_{\tau}} \Delta_{h_\ell}v \cdot \big( \power{v}{q}-\power{u_\ell}{q} \big)  \,\dx\dt
    \label{crucial_ineq_appr_finite_integration} \\ &\phantom{=}
    -\tfrac{1}{h_{\ell}} \iint_{\Omega \times (\tau - h_{\ell}, \tau)} \b[u_{\ell}(t),v(t+h_{\ell})] \,\dx\dt
    \nonumber\\ &\phantom{=}
    + \tfrac{1}{h_{\ell}} \iint_{\Omega \times (- h_{\ell}, 0)} \b[u_{\ell}(t),v(t)] \,\dx\dt + \delta_{1}(h_{\ell};\tau) + \delta_{2}(h_{\ell}),
    \nonumber
\end{align}
where the error terms $\delta_{1}(h_{\ell};\tau)$ and $ \delta_{2}(h_{\ell})$ are defined by
\begin{align*}
    \delta_{1}(h_{\ell};\tau)&:= \tfrac{1}{h_{\ell}} \iint_{\Omega_{\tau}} \b[v(t),v(t+h_{\ell})] \,\dx\dt, \\
    \delta_{2}(h_{\ell})&:=  \iint_{\Omega \times (- h_{\ell}, 0)} \Delta_{h_\ell}v \cdot \big( \power{v}{q}(t+h_{\ell}) - \power{u_{\ell}}{q}(t) \big) \,\dx\dt.
\end{align*}
To deal with the first boundary term, for $0<\delta<T$ and $0 \le t_o \le T - \delta$ we integrate \eqref{crucial_ineq_appr_finite_integration} over $\tau \in [t_o, t_o + \delta]$ and then divide by $\delta$.
Using that $f$ and $\b$ are non-negative, and that $\b[u_{\ell}(t),v(t)] = \b[u_o,v(0)]$ for $t \in (- h_{\ell}, 0)$, we end up with
\begin{align}\label{integrated_crucial_ineq_appr_finite_integration}
    \iint_{\Omega_{t_o}} f(x, u_{\ell}, Du_{\ell}) \,\dx\dt
    &\le
    \iint_{\Omega_{t_o+\delta}} f(x,v,Dv) \,\dx\dt
    \\ &\phantom{=}
    + \bint_{t_o}^{t_o+\delta} \iint_{\Omega_{\tau}} \Delta_{h_\ell}v \cdot \big( \power{v}{q}-\power{u_\ell}{q} \big) \,\dx\dt \,\d\tau
    \nonumber\\ &\phantom{=}
    -\tfrac{1}{\delta} \int_{t_o}^{t_o+\delta-h_{\ell}}\int_{\Omega } \b[u_{\ell}(t),v(t+h_{\ell})] \,\dx\dt 
    \nonumber\\ &\phantom{=}
    + \int_{\Omega} \b[u_o,v(0)] \,\dx
    + \delta_{1}(h_{\ell};t_o+\delta) + \delta_{2}(h_{\ell}).
    \nonumber
\end{align}
First, since the term on the left-hand side of \eqref{integrated_crucial_ineq_appr_finite_integration} is lower semicontinuous with respect to weak convergence in $L^p \big( 0,T;W^{1,p}(\Omega,\R^N) \big)$ by \eqref{eq:integrand}, by \eqref{eq:weak_convergence_mm}$_2$ we conclude that
\begin{equation}
    \iint_{\Omega_{t_o}} f(x,u,Du) \,\dx\dt
    \leq
    \liminf_{\mathfrak{K} \ni \ell \to \infty} \iint_{\Omega_{t_o}} f(x,u_{\ell}, Du_{\ell}) \, \dx\dt.
    \label{limit_var_ineq_1}
\end{equation}
Next, note that $\Delta_{h_\ell}v \to \partial_{t}v$ in $L^{q+1}(\Omega_{T}, \mathds{R}^{N})$.
Moreover, since $\power{u_{\ell}}{q}$ is bounded in $L^{\frac{q+1}{q}}(\Omega_{T}, \mathds{R}^{N})$ by \eqref{eq:energy_est_1_mm}, and since $\power{u_{\ell}}{q} \to \power{u}{q}$ a.e.~in $\Omega_{T}$ as $\mathfrak{K} \ni \ell \to \infty$ by \eqref{eq:ae_convergence_mm}, there exists a (not relabeled) subsequence such that $\power{u_{\ell}}{q} \wto \power{u}{q}$ as $\mathfrak{K} \ni \ell \to \infty$.
This gives us that
\begin{align}\label{limit_var_ineq_2}
    \lim_{\mathfrak{K} \ni \ell \to \infty} \iint_{\Omega_{\tau}}  \Delta_{h_\ell}v \cdot \big( \power{v}{q}-\power{u_\ell}{q} \big) \,\dx\dt = \iint_{\Omega_{\tau}} \partial_{t} v \cdot \big( \power{v}{q}-\power{u}{q} \big) \,\dx\dt
\end{align}
for every $\tau\in[t_o,t_o+\delta]$.
Moreover, since $\partial_{t} v \in L^{q+1}(\Omega_{T}, \mathds{R}^{N})$, we have that $v \in C^{0}([0,T]; L^{q+1}(\Omega_{T}, \mathds{R}^{N}))$.
Thus, together with \eqref{eq:ae_convergence_mm}, we find that $u_\ell(t) \to u(t)$ and $v(t+h_{\ell}) \to v(t)$ a.e.~in $\Omega_{T}$ as $\mathfrak{K} \ni \ell \to \infty$.
Since $\b$ is non-negative, applying Fatou's lemma yields
\begin{align}
    \bint_{t_o}^{t_o+\delta} \int_{\Omega } \b[u(t),v(t)] \,\dx\dt
    \leq
    \liminf_{\mathfrak{K} \ni \ell \to \infty} \tfrac{1}{\delta} \int_{t_o}^{t_o+\delta-h_{\ell}}\int_{\Omega } \b[u_{\ell}(t),v(t+h_{\ell})] \,\dx\dt.
    \label{limit_var_ineq_3}
\end{align}
Finally, since $\partial_{t}v \in L^{q+1}(\Omega_{T}, \mathds{R}^{N})$, by Lemma \ref{lem:finite_integration_by_parts_formula} we have that
\begin{align}\label{limit_var_ineq_4}
    \lim_{\mathfrak{K} \ni \ell \to \infty} \delta_{1}(h_{\ell};t_o+\delta)
    = 0 =
    \lim_{\mathfrak{K} \ni \ell \to \infty} \delta_{2}(h_{\ell}).
\end{align}
Using \eqref{limit_var_ineq_1}--\eqref{limit_var_ineq_4} to pass to the limit $\mathfrak{K} \ni \ell \to \infty$ in \eqref{integrated_crucial_ineq_appr_finite_integration}, we derive that
\begin{align}
    \iint_{\Omega_{t_o}} &f(x, u, Du) \,\dx\dt
    \nonumber \\ &\le
    \iint_{\Omega_{t_o+\delta}} f(x,v,Dv) \,\dx\dt
    + \bint_{t_o}^{t_o+\delta} \iint_{\Omega_{\tau}} \partial_t v \cdot \big( \power{v}{q}-\power{u}{q} \big) \,\dx\dt \,\d\tau
    \label{var_ineq_before_delta_limit} \\ &\phantom{=}
    -\bint_{t_o}^{t_o+\delta}\int_{\Omega } \b[u(t),v(t)] \,\dx\dt \,\d\tau
    + \int_{\Omega} \b[u_o,v(0)] \,\dx
    \nonumber
\end{align}
for all $t_o \in [0,T-\delta]$ and $\delta \in (0,T]$.
Since the term on the left-hand side and the first two terms on the right-hand side of \eqref{var_ineq_before_delta_limit} depend continuously on time, and by applying Lebesgue's differentiation theorem, we pass to the limit $\delta \downarrow 0$ in \eqref{var_ineq_before_delta_limit}.
This results in
\begin{align*}
     \iint_{\Omega_{t_o}} f(x, u, Du) \,\dx\dt \le &  \iint_{\Omega_{t_o}} f(x,v,Dv) \,\dx\dt\d\tau +  \iint_{\Omega_{t_o}}  \partial_{t}v \cdot \big( \power{v}{q}-\power{u}{q} \big)  \,\dx\dt \nonumber\\
    &- \int_{\Omega } \b[u(t_o),v(t_o)] \,\dx  + \int_{\Omega } \b[u_o,v(0)] \,\dx
\end{align*}
for a.e.~$t_o \in [0,T]$, and any comparison map $v \in V_q^{p}(E)$ with $\partial_{t}v \in L^{q+1}(\Omega_{T}, \mathds{R}^{N})$.
Therefore, $u$ is the desired variational solution to \eqref{eq:system} in the sense of Definition \ref{Definition:variational_solution}.
Observing that $u \in C^0([0,T);L^{q+1}(\Omega,\R^N))$ by Proposition \ref{prop:continuity-nondecreasing}, this concludes the proof of Theorem~\ref{thm:existence_in_nondecreasing_domains}.

\section{Proof of Theorem~\ref{thm:time_derivative_nondecreasing}}
\label{Section:Time derivative in the dual space(nondecreasing)}
\subsection{Preparation}
We introduce a similar notion of parabolic minimizers as in~\cite{Wieser}.
\begin{definition}[Parabolic minimizers]\label{def:parabolic_minimizers}
    Assume that $E \subset \Omega \times [0,T)$ is a relatively open noncylindrical domain, and let $f \colon \Omega \times \mathds{R}^{N} \times \mathds{R}^{Nn} \to [0,\infty]$ be a Cara\-théodory function.
    We say that $u \in V^{p}(E)$ with $\power{u}{q}\in L^1(E,\R^N)$ 
    is a parabolic minimizer if and only if
    \begin{align*}
     \iint_{E} \power{u}{q} \cdot \partial_{t}\phi +f(x,u,Du) \,\dx\dt \le \iint_{E} f(x,u+\phi, Du+D\phi) \,\dx\dt 
    \end{align*}
    holds for all $\phi \in C_{0}^{\infty}(E, \mathds{R}^{N})$.
\end{definition}

First, we show that any variational solution to \eqref{eq:system} is a parabolic minimizer. 

\begin{lemma}\label{lem:minimality_condition_for_var_sol}
Assume that $E$ satisfies \eqref{nondecreasing_condition}, that the variational integrand $f \colon \Omega \times \mathds{R}^{N} \times \mathds{R}^{Nn} \to [0,\infty)$ fulfills \eqref{eq:integrand} and \eqref{adjusted standard-coercivity and p-growth condition}, and that $u_o \in L^{q+1}(E^{0},\mathds{R}^{N})$ and $u_{\ast} \in W^{1,p}(\Omega, \mathds{R}^{N}) \cap L^{q+1}(\Omega, \mathds{R}^{N})$.
Then, any variational solution in the sense of Definition~\ref{Definition:variational_solution} is a parabolic minimizer in the sense of Definition \ref{def:parabolic_minimizers}.
\end{lemma}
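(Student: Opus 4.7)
My plan is to insert a carefully chosen comparison map into the variational inequality \eqref{eq:variational_inequality} and pass to the limit. Given $\phi \in C_0^\infty(E, \R^N)$, I will set $v_h := [u]_h + \phi$, where $[u]_h$ is the Landes time mollification \eqref{eq:time_mollification} with initial value $u_o$, extended from $E^0$ to $\Omega$ by $u_\ast$. First I verify admissibility: since $E$ is nondecreasing, any $x \in \Omega \setminus E^t$ lies in $\Omega \setminus E^s$ for every $s \in [0, t]$, which forces $u(x, s) = u_\ast(x) = u_o(x)$ and hence $[u]_h(x, t) = u_\ast(x)$; combined with $\phi \equiv 0$ outside $E$, this yields $v_h(\cdot, t) = u_\ast$ on $\Omega \setminus E^t$, i.e.\ $v_h \in V_q^p(E)$. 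Moreover, from $u \in L^\infty(0, T; L^{q+1})$ and $\partial_t v_h = (u - [u]_h)/h + \partial_t \phi$, we have $\partial_t v_h \in L^{q+1}(\Omega_T, \R^N)$. I then plug $v_h$ into \eqref{eq:variational_inequality} at a.e.\ $\tau$ close enough to $T$ so that $\phi(\cdot, t) \equiv 0$ for $t \geq \tau$; since $\phi$ also vanishes near $t = 0$, the initial boundary term equals $\int_{E^0} \b[u_o, u_o] = 0$, while the terminal one reduces to $\int_{E^\tau} \b[u(\tau), [u]_h(\tau)]$.

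The heart of the proof is processing $\iint_{E \cap \Omega_\tau} \partial_t v_h \cdot (\power{v_h}{q} - \power{u}{q}) \,\dx\dt$. Since $v_h = u_\ast$ is time-independent on $\Omega \setminus E^t$, the integration extends to the full cylinder $\Omega_\tau$. I decompose $\partial_t v_h = \partial_t [u]_h + \partial_t \phi$ and $\power{v_h}{q} - \power{u}{q} = w_h + (\power{[u]_h}{q} - \power{u}{q})$, where $w_h := \power{[u]_h + \phi}{q} - \power{[u]_h}{q}$. The monotonicity of $w \mapsto \power{w}{q}$ together with $\partial_t [u]_h = (u - [u]_h)/h$ makes $\iint \partial_t [u]_h \cdot (\power{[u]_h}{q} - \power{u}{q})$ nonpositive, so this contribution may be discarded to obtain an upper bound. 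For the remaining piece $\iint \partial_t [u]_h \cdot w_h$, I apply the chain rule $\partial_t v \cdot \power{v}{q} = \partial_t \bigl( \tfrac{1}{q+1} |v|^{q+1} \bigr)$ to both $v_h$ and $[u]_h$; the resulting boundary terms at $t = 0$ and $t = \tau$ cancel exactly, thanks to $v_h(0) = u_o = [u]_h(0)$ and $v_h(\tau) = [u]_h(\tau)$, yielding the identity $\iint \partial_t [u]_h \cdot w_h = -\iint \partial_t \phi \cdot \power{v_h}{q}$. Combining this with $\iint \partial_t \phi \cdot (\power{v_h}{q} - \power{u}{q})$ gives the clean estimate
\[
\iint_{E \cap \Omega_\tau} \partial_t v_h \cdot (\power{v_h}{q} - \power{u}{q}) \,\dx\dt \leq -\iint_{E} \partial_t \phi \cdot \power{u}{q} \,\dx\dt.
\]

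With this bound in hand, the final step is the limit $h \downarrow 0$. The convergence $\iint f(x, v_h, Dv_h) \to \iint f(x, u + \phi, Du + D\phi)$ follows from the Lipschitz estimate \eqref{ineq:Lipschitz_condition} (available under \eqref{adjusted standard-coercivity and p-growth condition}) combined with the strong convergence $[u]_h \to u$ in $L^p(0, T; W^{1, p})$ from Lemma \ref{lem:mollification:estimate_and_continuous_convergence}. The boundary term $\int_{E^\tau} \b[u(\tau), [u]_h(\tau)]$ vanishes in the limit by applying Lemma \ref{lem:mollification:estimate_and_continuous_convergence} to $u \in C^0([0, T]; L^{q+1})$ from Proposition \ref{prop:continuity-nondecreasing}, together with $\b[u(\tau), u(\tau)] = 0$. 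Since $f(x, u + \phi, Du + D\phi) = f(x, u, Du)$ outside $\spt(\phi) \subset E \cap \Omega_\tau$, the two $f$-integrals differ only on $E$, and rearranging produces the parabolic minimizer inequality of Definition \ref{def:parabolic_minimizers}.

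The main obstacle is the algebraic manipulation in the second paragraph: the nonlinearity $\power{\cdot}{q}$ blocks a direct analogue of the chain-rule computation that works so cleanly in the linear case $q = 1$. The resolution is the simultaneous use of monotonicity (to discard a non-positive contribution involving $\power{[u]_h}{q} - \power{u}{q}$) and the chain rule (producing boundary terms in $|v_h|^{q+1}$ and $|[u]_h|^{q+1}$ that cancel precisely because $\tau$ lies beyond the temporal support of $\phi$). Once this cancellation is identified, the limit $h \downarrow 0$ is routine.
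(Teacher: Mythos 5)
Your overall strategy coincides with the paper's (use the Landes mollification of $u$, shifted by $\phi$, as a comparison map in the variational inequality), but there is one genuine gap: you define $[u]_h$ with initial value $u_o$ extended by $u_\ast$. Under the hypotheses of the lemma, $u_o$ is only assumed to lie in $L^{q+1}(E^0,\R^N)$, so this initial datum need not belong to $W^{1,p}(\Omega,\R^N)$. Since $[u]_h(t)=e^{-t/h}u_o+\tfrac1h\int_0^te^{(s-t)/h}u(s)\,\ds$ and the integral term does lie in $W^{1,p}$ for a.e.\ $t$, the presence of the term $e^{-t/h}u_o$ means $[u]_h(t)\notin W^{1,p}(\Omega,\R^N)$ for every $t$ whenever $u_o\notin W^{1,p}$. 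Consequently $v_h=[u]_h+\phi$ is not in $V^p_q(E)$ and is not an admissible comparison map; moreover, the strong convergence $[u]_h\to u$ in $L^p(0,T;W^{1,p})$, which you invoke to pass to the limit in the $f$-term via \eqref{ineq:Lipschitz_condition}, also fails. The paper circumvents this by mollifying the initial datum to $u_o^{(\epsilon)}\in C^\infty(\Omega,\R^N)\cap W^{1,p}(\Omega,\R^N)$ with $\spt(u_o^{(\epsilon)}-u_\ast)\subset E^0$, carrying the resulting error term $\int_{E^0}\b[u_o,u_o^{(\epsilon)}]\,\dx$ through the argument and sending $\epsilon\downarrow0$ only at the very end. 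Your exact cancellation of the $t=0$ boundary terms survives this modification (it only uses $v_h(0)=[u]_h(0)$, which still holds since $\phi(0)=0$), so the repair is routine, but as written the proof does not go through.

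Apart from this, your treatment of the time-derivative integral is correct and takes a genuinely different route from the paper's. You split $\power{v_h}{q}-\power{u}{q}=w_h+(\power{[u]_h}{q}-\power{u}{q})$, discard $\iint\partial_t[u]_h\cdot(\power{[u]_h}{q}-\power{u}{q})\le0$ by monotonicity of $w\mapsto\power{w}{q}$ combined with the ODE \eqref{eq:ODE_mollification}, and obtain the identity $\iint\partial_t[u]_h\cdot w_h=-\iint\partial_t\phi\cdot\power{v_h}{q}$ from an exact cancellation of the $|v_h|^{q+1}$ and $|[u]_h|^{q+1}$ boundary terms. The paper instead writes the integral as $I_1+I_2+I_3$, evaluates $I_1$ by the chain rule, and bounds $I_2$ via the convexity of $\psi=\tfrac1{q+1}|\cdot|^{q+1}$ together with the Jensen-type estimate $\psi([u]_h)\le[\psi(u)]_h$, which leaves a residual boundary term $\int_{\Omega\times\{T\}}\psi([u]_h+s[\phi]_h)-[\psi(u)]_h\,\dx$ requiring $u\in C^0([0,T];L^{q+1})$ to vanish. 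Your version yields an exact identity rather than an inequality at this stage and avoids that residual term; it also dispenses with the auxiliary convexity parameter $s$, which is indeed unnecessary here because the $p$-growth condition \eqref{adjusted standard-coercivity and p-growth condition} makes all $f$-integrals finite. Both approaches ultimately rest on the same two ingredients (monotonicity/convexity of the power nonlinearity and the ODE satisfied by the mollification), so the difference is one of bookkeeping, but yours is the tidier computation once the initial-datum issue above is fixed.
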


\begin{proof}
Let $u_o^{(\epsilon)} \in C^\infty(\Omega,\R^N) \cap W^{1,p}(\Omega,\R^N)$ be defined by~\eqref{initial_value:u_0_epsilon}, and note that
\begin{align*}
    0 \le \int_{E^0} f \big( x,u_o^{(\epsilon)}, Du_o^{(\epsilon)} \big) \,\dx\dt < \infty
\end{align*}
holds by \eqref{adjusted standard-coercivity and p-growth condition}.
Further, since $E$ is nondecreasing and $\spt \big( u_o^{(\epsilon)}  - u_{\ast} \big) \subset E^{0}$ by construction, it follows that $\spt \big( u_o^{(\epsilon)}  - u_{\ast} \big) \subset E^t$ for all $t \in [0,T]$.
Moreover, we define $[u]_h(t)$ according to \eqref{eq:time_mollification} with initial values $u_o^{(\epsilon)}$, and for a test function $\phi \in C_{0}^{\infty}(E, \mathds{R}^{N})$, let $[\phi]_h(t)$ be defined according to \eqref{eq:time_mollification} with zero initial values.
Due to \eqref{nondecreasing_condition}, for $v_{h}(t):= [u]_h(t) + s [\phi]_h(t)$ with $s>0$ we find that $v_h = u_{\ast}$ a.e.~in $\Omega_{T} \setminus E$.
Hence, $v_h$ is an admissible comparison map in the variational inequality \eqref{eq:variational_inequality}, which holds for any $\tau \in [0,T]$ and thus in particular for $\tau=T$, since $u \in C^{0}([0,T];L^{q+1}(\Omega,\R^N))$ by Proposition \ref{prop:continuity-nondecreasing}.
Omitting the non-negative boundary term on the left-hand side, we obtain that
\begin{align}
    \iint_{E} &f(x,u,Du) \,\dx\dt
    \nonumber \\ &\leq
	\iint_{E} f(x,v_{h},Dv_{h}) \,\dx\dt
	+\iint_{E} \partial_t v_{h} \cdot \big( \power{v_{h}}{q} - \power{u}{q} \big) \,\dx\dt
	+\int_{E^0} \b \big[ u_o,u_o^{(\epsilon)} \big] \,\dx.
    \label{time_derivative_variational_inequality}
\end{align}
By the local Lipschitz condition \eqref{ineq:Lipschitz_condition} and the convergence $v_h \to u+s\phi$ in $L^{p}(0,T;W^{1,p}(\Omega, \mathds{R}^{N}))$ as $h \downarrow 0$, for the first term on the right-hand side of \eqref{time_derivative_variational_inequality} we obtain that
\begin{align*}
    \lim_{h \downarrow 0} \iint_{E} f(x,v_h,Dv_h)\,\dx\dt = \iint_{E} f(x,u+s\phi, Du+sD\phi) \,\dx\dt. 
\end{align*}
By definition of $v_{h}$, for the second term on the right-hand side of \eqref{time_derivative_variational_inequality} we find that
\begin{align}
    \iint_{E} &\partial_t v_{h} \cdot \big( \power{v_{h}}{q} - \power{u}{q} \big) \,\dx\dt
     \nonumber \\ &=
    \iint_{\Omega_{T}} \partial_t v_{h} \cdot \power{v_{h}} {q}  \,\dx\dt
    - \iint_{\Omega_{T}} \partial_t [u]_{h} \cdot \power{u}{q} \,\dx\dt
    - s\iint_{E} \partial_t [\phi]_{h} \cdot \power{u}{q} \,\dx\dt
    \nonumber\\ &=:
    I_{1} + I_{2} + I_{3}.
    \label{eq:split_time_derivative_integral}
\end{align}
Let $\psi(\cdot):=\tfrac{1}{q+1}\vert \cdot \vert^{q+1}$.
Using $\psi^{\prime}(v_{h}) = \power{v_{h}}{q}$, we compute
\begin{align*}
    I_{1} = \iint_{\Omega_{T}} \partial_{t}[\psi(v_{h})] \,\dx\dt = \int_{\Omega \times \{ T \}} \psi([u]_h + s [\phi]_h) \,\dx - \int_{\Omega \times \{ 0 \}} \psi \big(u_o^{(\varepsilon)} \big) \,\dx.
\end{align*}
By the convexity of $\psi$ and \eqref{ineq:mollification_convexity_estimate}, we obtain that
\begin{align*}
    -\partial_t [u]_{h} \cdot \power{u}{q} &= \tfrac{1}{h}([u]_{h}-u)\psi^{\prime}(u) \le \tfrac{1}{h}(\psi([u]_{h})- \psi(u)) \\
    &\le \tfrac{1}{h}([\psi(u)]_{h} - \psi(u)) = -\partial_t [\psi(u)]_{h},
\end{align*}
where the mollification of $\psi(u)$ is defined according to \eqref{eq:time_mollification} with initial values $\psi \big( u_o^{(\epsilon)} \big)$.
Thus, the second integral can be bounded by
\begin{align*}
    I_{2} &\le -\iint_{\Omega_{T}} \partial_t [\psi(u)]_{h} \,\dx\dt
    = \int_{\Omega \times \{0\}} \psi \big( u_o^{(\epsilon)} \big) \dx - \int_{\Omega \times \{T\}} [\psi(u)]_{h} \,\dx.
\end{align*}
Inserting the preceding estimates into \eqref{eq:split_time_derivative_integral}, and observing that the boundary terms with the integrand $\psi \big( u_o^{(\epsilon)} \big)$ cancel each other out, we get that
\begin{align*}
    \iint_{E} &\partial_t v_{h} \cdot \big( \power{v_{h}}{q} - \power{u}{q} \big) \,\dx\dt \\
    &\le \int_{\Omega \times \{ T \} } \psi([u]_{h} + s[\phi]_{h}) - [\psi(u)]_{h} \,\dx
    - s\iint_{E} \partial_t [\phi]_{h} \cdot \power{u}{q} \,\dx\dt.
\end{align*}
Since the first term on the right-hand side of the preceding inequality vanishes in the limit $h\downarrow0$ because of $u\in C^0([0,T];L^{q+1}(\Omega,\R^N))$ and $\phi=0$ on $\Omega\times\{T\}$,
we conclude that
\begin{align*}
    \limsup_{h\downarrow 0} \iint_{E} \partial_t v_{h} \cdot (\power{v_{h}}{q} - \power{u}{q}) \,\dx\dt \le -s \iint_{E} \partial_{t} \phi \cdot \power{u}{q}\,\dx\dt.
\end{align*}
Inserting this into \eqref{time_derivative_variational_inequality}, passing to the limit $h \downarrow 0$, and using that $f$ is convex gives us that
\begin{align*}
    \iint_{E} &f(x,u,Du) \,\dx\dt \\
	&\leq \iint_{E} f(x,u+s\phi,Du+sD\phi) \,\dx\dt -s \iint_{E} \partial_{t} \phi \cdot \power{u}{q}\,\dx\dt
	+\int_{E^0} \b \big[ u_o,u_o^{(\epsilon)} \big] \,\dx\\
    &\leq \iint_{E} (1-s)f(x,u,Du) +sf(x,u+\phi, Du + D\phi)\,\dx\dt    
    -s \iint_{E} \partial_{t} \phi \cdot \power{u}{q} \,\dx\dt
    \\ &\phantom{=}
	+\int_{E^0} \b \big[ u_o,u_o^{(\epsilon)} \big] \,\dx.
\end{align*}
Since $u_o^{(\varepsilon)} \to u_o$ in $L^{q+1}(E^0)$ as $\varepsilon \downarrow 0$, the last term on the right-hand side of the preceding inequality vanishes in the limit $\varepsilon \downarrow 0$.
Thus, reabsorbing the first term of the right-hand side of the preceding inequality into the left-hand side, and dividing by $s>0$ yields
\begin{align*}
    \iint_{E} \partial_{t} \phi \cdot \power{u}{q}+f(x,u,Du)  \,\dx\dt \le \iint_{E}  f(x,u+\phi, Du+D\phi)  \,\dx\dt.
\end{align*}
This concludes the proof of the lemma.
\end{proof}

\subsection{Proof of Theorem~\ref{thm:time_derivative_nondecreasing}}
Applying Lemma~\ref{lem:minimality_condition_for_var_sol} with $\phi$ replaced by $s\phi$ with some $s \in (0,1)$ to the variational solution $u$, and using \eqref{ineq:Lipschitz_condition} we obtain that
\begin{align*}
     \iint_{E}&  \power{u}{q} \cdot \partial_{t} \phi \,\dx\dt  \\
    &\le
    \iint_{E} \tfrac{1}{s} \big[ f(x,u+s\phi, Du+sD\phi) - f(x,u,Du) \big] \,\dx\dt  \\
    &\le
    c \iint_{E} \big( ( \vert Du \vert + \vert Du + s\phi \vert + \vert u \vert + \vert u+s\phi \vert )^{p-1} + \vert G \vert^{\frac{p-1}{p}} \big) (\vert D\phi\vert +\vert \phi \vert) \,\dx\dt \\
    &\to
    c \iint_{E} \big( ( \vert Du \vert + \vert u \vert )^{p-1} + \vert G \vert^{\frac{p-1}{p}} \big) (\vert D\phi\vert +\vert \phi \vert) \,\dx\dt
\end{align*}
in the limit $s\downarrow0$, 
for any $\phi \in C_{0}^{\infty}(E, \mathds{R}^{N})$. Since we can replace $\phi$ by $-\phi$ above, we obtain the same estimate for the absolute value of the left-hand side integral.  
Therefore, applying Hölder's inequality leads to 
\begin{align*}
    \bigg\vert \iint_{E}  \power{u}{q} \cdot \partial_{t} \phi \,\dx\dt \bigg\vert
    &\le
    c \iint_{E} \big( ( \vert Du \vert + \vert u \vert )^{p-1} + \vert G \vert^{\frac{p-1}{p}} \big) (\vert D\phi\vert +\vert \phi \vert) \,\dx\dt
    \\
    &\le c \bigg( \iint_{E} \vert Du \vert^{p} + \vert u \vert^{p} + \vert G \vert  \,\dx\dt \bigg)^\frac{p-1}{p} \Vert \phi \Vert_{V^{p}(E)} \\
    &\le
    c \Big[ \Vert u \Vert_{V^{p}(E)}^{p} + \Vert G \Vert_{L^1(E)} \Big]^{\frac{p-1}{p}} \Vert \phi \Vert_{V^{p}(E)} 
\end{align*}
for all $\phi \in C_{0}^{\infty}(E, \mathds{R}^{N})$.
Since \eqref{nondecreasing_condition} and \eqref{ineq:lower_measure_bound} hold, by Remark~\ref{rem:C_0_inf_is_dense_in_V_p,0} we know that $C_{0}^{\infty}(E, \mathds{R}^{N})$ is dense in $V^{p,0}(E)$.
Therefore, we obtain that $\partial_{t} \power{u}{q} \in \big( V^{p,0}(E) \big)^{\prime}$, as well as the claimed estimate in the $\big( V^{p,0}(E) \big)^{\prime}$-norm.

\bigskip
\textbf{Acknowledgements.}
This research was funded in whole or in part by the Austrian Science
Fund (FWF) 10.55776/J4853 and 10.55776/P36295. Jarkko Siltakoski was
supported by the Magnus Ehrnrooth and Emil Aaltonen foundations.
The second and third authors are grateful to the Department of
Mathematics at the Paris Lodron Universität Salzburg, where this
research was initiated, for their
hospitality during their stay.
For the purpose of open access, the authors have applied a CC BY public copyright license to any Author Accepted Manuscript (AAM) version arising from this submission.

\textbf{Data availability.}
Data sharing not applicable to this article as no datasets were generated or analyzed during the current study.

\textbf{Conflict of Interests.} The authors do not have a conflict of interests.

\end{document}